\newtheorem{definition}{Definition}[section]
\newtheorem{eg}[definition]{Example}
\newtheorem{problem}[definition]{Open Problem}
\newtheorem{prop}[definition]{Proposition}
\newtheorem{theorem}[definition]{Theorem}
\newtheorem{lemma}[definition]{Lemma}
\newtheorem{corollary}[definition]{Corollary}
\newtheorem{construction}[definition]{Construction}
\title{Finitely and non-finitely related words}
\author{Daniel Glasson}
\address{Mathematical Sciences, RMIT University, Melbourne, 3000, Australia}
\email{glassond@outlook.com}
\begin{document}

\begin{abstract}
An algebra is finitely related (or has finite degree) if its term functions are determined by some
finite set of finitary relations. Nilpotent monoids built from words, via Rees
quotients of free monoids, have been used to exhibit many interesting properties
with respect to the finite basis problem. We show that much of this intriguing
behaviour extends to the world of finite relatedness by using interlocking
patterns called chain, crown, and maelstrom words. In particular, we show that
there are large classes of non-finitely related nilpotent monoids that can be
used to construct examples of: ascending chains of varieties alternating between
finitely and non-finitely related; non-finitely related semigroups whose direct
product are finitely related; the addition of an identity element to a
non-finitely related semigroup to produce a finitely related semigroup.
\end{abstract}
\keywords{semigroup, finitely related, nilpotent monoid}
\subjclass{20M07,08A40}
\maketitle
\section{Introduction}
Given a semigroup $\mathbf{S}$, a word $\mathbf{t}$ in $n$ variables induces an $n$-ary \emph{term function} $\mathbf{t}^\mathbf{S}: S^n \to S$ by evaluation. The set of all finitary term functions on a semigroup ~$\mathbf{S}$ is called the \emph{clone} of $\mathbf{S}$, and is denoted $\text{Clo}(\mathbf{S})$. A $k$-ary relation $R \subseteq S$ is \emph{compatible} with $\mathbf{S}$ if it forms a subsemigroup of $\mathbf{S}^k$. By a classical result of universal algebra, any finitary operation that preserves all finitary compatible relations on a finite semigroup $\mathbf{S}$ is a term function on $\mathbf{S}$. In other words, the set of all finitary compatible relations on a finite semigroup determines its clone. If a finite subset of the finitary relations suffices to determine the clone, then $\mathbf{S}$ is said to be \emph{finitely related}.

The study of finitely related semigroups began with Davey, Jackson, Pitkethly and Szab\'{o} \cite{Davey2011}, who showed, among other things, that all finite commutative semigroups/monoids and all finite nilpotent  semigroups are finitely related. Mayr \cite{Mayr2013} built on their work, providing the first example of a non-finitely related finite semigroup. Mayr also showed that Clifford semigroups are finitely related and that finite relatedness is preserved by the addition of a zero element. Dolinka \cite{Dolinka2018} showed that all finite bands are finitely related, answering a question posed by Mayr \cite{Mayr2013}. More recently, Steindl \cite{steindl2022} answered a further question by Mayr, giving an example of a non-finitely related nilpotent monoid, thus establishing that finite relatedness is not preserved by the addition of an identity element. 

Interestingly, the study of finitely related semigroups has developed in much the same way as the study of the finite basis property for semigroups. We continue this trend by beginning the exploration of the finite relatedness of nilpotent monoids built from words. These monoids have previously provided a plethora of interesting examples of finitely and non-finitely based semigroups \cite{jackson2001finite,jackson2018monoid,jackson2000finitely,lee2013finitely,sapir2000finitely}. 

We will build off the work of Steindl \cite{steindl2022} by using nilpotent monoids constructed from words to produce large classes of non-finitely related semigroups. Examples drawn from these classes are used to show some interesting behaviour of the finite relatedness of semigroups. In particular, we show that:
\begin{itemize}
    \item There are no inherently non-finitely related nilpotent monoids (Corollary \ref{cor_inherentnilpotentmonoid});
    \item There exist two non-finitely related semigroups whose direct product is finitely related (Corollary \ref{cor_existencetwonfrisfr});
    \item There exists a non-finitely related semigroup $\mathbf{S}$ such that $\mathbf{S}^1$ is finitely related (Corollary \ref{cor_conformal});
    \item There exists an infinite ascending chain of finitely generated semigroup varieties alternating between finitely and non-finitely related (Corollary \ref{cor_alternatingfrnfr});
    \item There exists a non-finitely based semigroup which is finitely related (Theorem \ref{thm_asabtbfr}).
\end{itemize}

\section{Preliminaries}
\subsection{Finitely related algebras}
An \emph{algebra} $\mathbf{A} = \langle A; F \rangle$ is a set $A$ with a set of finitary operations $F$ on $A$. We refer the reader to Burris and Sankappanavar \cite{Burris1981} for a good introduction to the study of algebras. The set of all finitary term functions on an algebra $\mathbf{A}$ is called the \emph{clone of term functions} of $\mathbf{A}$. We refer to the clone of term functions of an algebra $\mathbf{A}$ as simply the clone of $\mathbf{A}$ and denote it ~$\text{Clo}(\mathbf{A})$. 

Given a set $A$, an operation $f: A^n \to A$ preserves a relation $R \subseteq A^k$ if $f(r_1,\dots,r_n) \in R$ for all $r_1,\dots,r_n \in R$, where $f$ is applied coordinatewise. An algebra $\mathbf{A}$ is said to be \emph{finitely related} if there exists a finite set of finitary relations such that the operations preserving those relations are precisely the term functions of $\mathbf{A}$. We will say that $\mathbf{A}$ is \emph{non-finitely related} if no such set of finitary relations exist. Finitely related algebras have also been called algebras possessing finite degree by some authors \cite{Davey2011,szendrei1986clones}.

Given a function $f:A^n \to A$, we may identify two coordinates, say $i$ and $j$ (with ~$i < j$), to produce $f_{ij}:A^n \to A$ which maps
\[
(x_1,\dots,x_n) \mapsto f(x_1,\dots,x_{i-1},x_j,x_{i+1},\dots,x_n).
\]
We call the function $f_{ij}$ an \emph{identification minor}.

Given a term $\mathbf{t}(x_1,\dots,x_n)$ and $i,j \in \underline{n} = \{1,\dots,n\}$ with $i < j$, we denote the term obtained from $\mathbf{t}$ by replacing each instance of $x_i$ with $x_j$ by $\mathbf{t}^{(ij)}$.

Let $V$ be a variety and $\mathscr{F} = \{\mathbf{t}_{ij}(x_1,\dots,x_n) \mid 1 \leq i < j \leq n\}$ be an indexed family of $n$-ary terms. We say that $\mathscr{F}$ is a \emph{scheme} for $V$ if $\mathscr{F}$ satisfies the following conditions:
\\
Dependency: for all $\mathbf{A} \in V$, $\mathbf{t}_{ij}^\mathbf{A}$ does not depend on its $i$-th coordinate;
\\
Consistency: for all $i,j,k,l \in \mathbb{N}$ with $i < j$ and $k < l$, $V$ satisfies
     \[
     V \models \mathbf{t}_{ij}^{(kl)} \approx \mathbf{t}_{kl}^{(ij)}.
     \]

A scheme $\mathscr{F} = \{\mathbf{t}_{ij}(x_1,\dots,x_n) \mid 1 \leq i < j \leq n\}$ \emph{comes from a term} for $V$ if there exists a term $\mathbf{s}$ such that $V \models \mathbf{s}^{(ij)} \approx \mathbf{t}_{ij}$ for all $1 \leq i < j \leq n$.

\begin{theorem}[{\cite[Theorem 2.9]{Davey2011}}]\label{thm_frconditions}
Let $\mathbf{A}$ be a finite algebra. Then $\mathbf{A}$ is finitely related if and only if there exists $k \geq \lvert A \rvert$ such that the following equivalent conditions hold:
\begin{enumerate}[label=(\arabic*)]
    \item for all $n > k$, every $n$-ary scheme for $V(\mathbf{A})$ comes from a term;
    \item for all $n > k$, if $f_{ij}$ is a term function for every $1 \leq i < j \leq n$ then $f$ is a term function for $\mathbf{A}$. 
\end{enumerate}
\end{theorem}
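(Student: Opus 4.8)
I would prove this in two stages: first that conditions~(1) and~(2) are equivalent (after possibly enlarging $k$), and then that either of them, holding for some $k \ge \lvert A \rvert$, is equivalent to $\mathbf{A}$ being finitely related. The engine for the first stage is the elementary remark that, since $n > k \ge \lvert A \rvert$, every tuple $\bar a = (a_1,\dots,a_n) \in A^n$ has a repeated pair of entries $a_i = a_j$ with $i < j$, so that $f(\bar a) = f_{ij}(\bar a)$; hence an $n$-ary operation on $A$ is completely determined by the family $\{f_{ij} : 1 \le i < j \le n\}$ of its identification minors. For $(1)\Rightarrow(2)$, I would take an operation $f$ all of whose identification minors are term functions, write $f_{ij} = \mathbf{t}_{ij}^{\mathbf{A}}$, and check that $\{\mathbf{t}_{ij}\}$ is a scheme for $V(\mathbf{A})$: dependency just records that $f_{ij}$ ignores coordinate~$i$, and the consistency equations hold because, on disjoint index pairs, two coordinate identifications of $f$ commute. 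Then any term $\mathbf{s}$ from which this scheme comes satisfies $\mathbf{s}^{(ij)\mathbf{A}} = f_{ij}$ for all $i<j$, so $\mathbf{s}^{\mathbf{A}}$ and $f$ share all identification minors and $f = \mathbf{s}^{\mathbf{A}}$. For $(2)\Rightarrow(1)$, given a scheme $\{\mathbf{t}_{ij}\}$ and $n$ sufficiently large relative to $\lvert A \rvert$, I would define $f(\bar a) := \mathbf{t}_{ij}^{\mathbf{A}}(\bar a)$ using any repeated pair $a_i = a_j$; consistency together with dependency make this well defined and force $f_{ij} = \mathbf{t}_{ij}^{\mathbf{A}}$, so $(2)$ supplies a term $\mathbf{s}$ with $\mathbf{s}^{\mathbf{A}} = f$, whence $V(\mathbf{A}) \models \mathbf{s}^{(ij)} \approx \mathbf{t}_{ij}$ and the scheme comes from $\mathbf{s}$.

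For the second stage I would invoke the standard description of $n$-ary term functions via the free algebra: the subalgebra of $\mathbf{A}^{A^n}$ generated by the $n$ coordinate projections $\pi_1,\dots,\pi_n$ (with $\pi_i(\bar a) = a_i$) is a compatible relation $\rho_n$ of arity $\lvert A \rvert^n$, and evaluating an $n$-ary operation $f$ at the tuple $(\pi_1,\dots,\pi_n)$ shows that $f$ preserves $\rho_n$ if and only if $f$ is a term function of $\mathbf{A}$. If $\mathbf{A}$ is finitely related, I would fix a finite witnessing family of compatible relations of arity at most $m$; a relation of arity $m' \le m$ has at most $\lvert A \rvert^m$ tuples, so once $n > \lvert A \rvert^m$ any list of $n$ of its tuples repeats, and the identification-minor argument of the first stage shows that an operation whose identification minors are all term functions preserves every witnessing relation and hence is itself a term function, giving $(2)$ with $k = \max(\lvert A \rvert, \lvert A \rvert^m)$. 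Conversely, assuming $(2)$ holds for some $k \ge \lvert A \rvert$, I would take as witnessing family the (finite) set of all compatible relations on $A$ of arity at most $\lvert A \rvert^k$: an operation of arity $n \le k$ that preserves this set preserves $\rho_n$ and so is a term function, while for $n > k$ one inducts on $n$, noting that each identification minor of such an operation is, after deleting its dummy coordinate, an $(n-1)$-ary operation still preserving the set, hence a term function by induction, so that $(2)$ promotes the operation itself to a term function; thus $\mathbf{A}$ is finitely related.

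The main difficulty will be the converse direction of the second stage --- more precisely, the realisation that finite relatedness only requires preserving compatible relations up to arity $\lvert A \rvert^k$, which is exactly what allows the single relations $\rho_n$ and condition $(2)$ to be turned into an induction on arity. A secondary, purely technical, nuisance is the bookkeeping with coinciding indices in the first stage: verifying that a family of identification minors really is a scheme, and that the operation built from a scheme is well defined. The latter is where one must take $n$ somewhat larger than $\lvert A \rvert$, so that any two repeated pairs of entries in a tuple can be compared through a third pair disjoint from both; this is why the equivalence of (1) and (2) is asserted only for a suitably large $k$ rather than for $k = \lvert A \rvert$ itself.
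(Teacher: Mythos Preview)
The paper does not prove this theorem at all: it is quoted verbatim as \cite[Theorem 2.9]{Davey2011} and used as a black box, so there is no ``paper's own proof'' to compare against. Your outline is a faithful sketch of the argument in the cited source --- the pigeonhole observation that $n>\lvert A\rvert$ forces a repeated coordinate, the passage between schemes and families of identification minors, and the characterisation of term functions via the free-algebra relation $\rho_n$ together with an induction on arity are exactly the ingredients Davey, Jackson, Pitkethly and Szab\'o use. One small point worth tightening in your $(1)\Rightarrow(2)$ step: to guarantee the dependency condition holds throughout $V(\mathbf{A})$ (not just in $\mathbf{A}$), you should choose each $\mathbf{t}_{ij}$ to be a term that literally omits the variable $x_i$, which is possible precisely because $f_{ij}$ has a fictitious $i$-th coordinate and $n>\lvert A\rvert$; the paper itself alludes to this via \cite[Lemma 2.6]{Davey2011} and the subsequent remark about schemes inducing a unique operation.
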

Davey et al. \cite{Davey2011} and Dolinka \cite{Dolinka2018} used schemes to great effect, while Mayr \cite{Mayr2013} and Steindl \cite{steindl2022} both opted to use operations with minor identified terms. For the sake of convenience, we have chosen to use schemes in this paper as we will repeatedly use the equational theory of monoids built from words.

We briefly note that any $n$-ary scheme for an algebra $\mathbf{A}$ with $n > \lvert A \rvert$ induces a unique operation $f: A^n \to A$ such that $f_{ij} = \mathbf{t}_{ij}^\mathbf{A}$ (see \cite[Lemma 2.6]{Davey2011} for details) by application of the pigeonhole principle.

The \emph{term degree} of an algebra is the least $k \geq \lvert A \rvert$ such that Theorem \ref{thm_frconditions} holds. If we remove the requirement that $k \geq \lvert A \rvert$, then the \emph{degree} of an algebra is the least $k$ such that condition (2) of Theorem \ref{thm_frconditions} holds.

\begin{theorem}[{\cite[Theorem 2.11]{Davey2011}}]\label{thm_frvarietalproperty}
 If two algebras generate the same variety then they are either both finitely or both non-finitely related.   
\end{theorem}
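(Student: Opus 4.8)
The plan is to read this off directly from the scheme characterisation of finite relatedness in Theorem \ref{thm_frconditions}, applied in the setting of finite algebras. The conceptual point is that the notion of an \emph{$n$-ary scheme for a variety $V$}, and of such a scheme \emph{coming from a term for $V$}, are formulated purely in terms of $V$: dependency asks that $\mathbf{t}_{ij}^{\mathbf{A}}$ be independent of its $i$-th coordinate for \emph{all} $\mathbf{A} \in V$, whereas consistency and ``comes from a term'' are identities satisfied by $V$. None of these refer to a distinguished generating algebra, so the statement ``every $n$-ary scheme for $V$ comes from a term'' is a property of the pair $(V,n)$ alone. (Note that condition (2) of Theorem \ref{thm_frconditions}, phrased through term functions of the particular algebra, is not manifestly a varietal condition; this is part of why schemes are the natural vehicle here.)

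With that in place, here is how I would argue. Let $\mathbf{A}$ and $\mathbf{B}$ be finite algebras with $V(\mathbf{A}) = V(\mathbf{B}) =: V$, and suppose $\mathbf{A}$ is finitely related. By Theorem \ref{thm_frconditions} there is some $k \geq |A|$ such that for every $n > k$ each $n$-ary scheme for $V$ comes from a term. Put $k' := \max\{k, |B|\}$. Then $k' \geq |B|$, and since $k' \geq k$ the property ``every $n$-ary scheme for $V$ comes from a term'' still holds for all $n > k'$. Thus the reverse implication of Theorem \ref{thm_frconditions}, applied to $\mathbf{B}$ with the constant $k'$, yields that $\mathbf{B}$ is finitely related. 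Interchanging the roles of $\mathbf{A}$ and $\mathbf{B}$ gives the converse, so the two algebras are simultaneously finitely related or simultaneously non-finitely related.

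I do not anticipate a genuine obstacle: the substance is entirely contained in Theorem \ref{thm_frconditions}, and what remains is bookkeeping. The one point to handle with a little care is that the threshold $k$ produced for $\mathbf{A}$ need not satisfy $k \geq |B|$, so it must be enlarged to $\max\{k, |B|\}$ before Theorem \ref{thm_frconditions} can be invoked for $\mathbf{B}$; and one should record explicitly — even though it is immediate from the definitions — that ``scheme for $V$'' and ``comes from a term for $V$'' depend only on $V$, since this is precisely what makes finite relatedness a varietal invariant.
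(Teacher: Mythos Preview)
Your argument is correct, but note that the paper does not actually prove this theorem: it is quoted verbatim from \cite[Theorem~2.11]{Davey2011} and used as a black box throughout, so there is no ``paper's own proof'' to compare against. That said, your derivation from Theorem~\ref{thm_frconditions} is exactly the intended one-line justification: the scheme characterisation is phrased entirely in terms of the variety $V$, so only the threshold $k$ needs adjusting to $\max\{k,|B|\}$ when passing from $\mathbf{A}$ to $\mathbf{B}$. The only caveat is that you have (reasonably, given the tools available in this paper) restricted to \emph{finite} algebras, whereas the theorem as stated does not carry that hypothesis explicitly; in the context of this paper that restriction is harmless.
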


Given a set $A$ and an operation $f:A^n \to A$, we say $f$ depends on its $j$-th coordinate if there exists $a,b \in A^n$ such that $a_i = b_i$ for all $i \in \underline{n}\setminus\{j\}$ and $a_j \neq b_j$ with $f(a) = f(b)$. A scheme depends on all of its variables if the unique operation it induces depends on all of its coordinates. Throughout this paper we will assume that any scheme depends on all of its variables since otherwise a term can simply be found by identification (provided the scheme has sufficient arity) \cite[Remark 2.21]{Davey2011}.

\subsection{Nilpotent monoids and the structure of their schemes.}

A semigroup $\mathbf{S}$ is said to be $d$-\emph{nilpotent} if it satisfies
\[
\mathbf{S} \models x_1\cdots x_d \approx y_1 \cdots y_d.
\]
A \emph{nilpotent monoid} is a nilpotent semigroup with an adjoined identity.  

Davey et al. \cite[Theorem 3.4]{Davey2011} proved that all finite nilpotent semigroups are finitely related. Mayr \cite[Theorem 5.1]{Mayr2013} showed that every finite $3$-nilpotent monoid is finitely related, which, by semigroup lore, implies that almost all finite monoids are finitely related \cite{koubek1985}. Steindl \cite[Theorem 2.5]{steindl2022} gave the first example of a non-finitely related nilpotent monoid, hence showing that finite relatedness is not preserved by adjoining an identity.

For a scheme over a nilpotent monoid (and any semigroup containing a $2$-element semilattice), we may easily recover the fact that all variables, except the identified variable, appear in the terms in the scheme. 

Given a word $\mathbf{w}$, the content of $\mathbf{w}$, denoted $c(\mathbf{w})$, is the set of all variables appearing in $\mathbf{w}$. Throughout this paper, we will set $X_n = \{x_1,\dots,x_n\}$.

\begin{lemma}[{\cite[Lemma 3.2]{steindl2022}}, {\cite[Lemma 3.2]{Dolinka2018}}]\label{lemma_containssemilattice}
Let $\mathbf{S}$ be a semigroup such that ~$V(\mathbf{S})$ contains the variety of semilattices. Then for $n > \lvert S \rvert + 1$, and any $n$-ary scheme $\mathscr{F} = \{\mathbf{t}_{ij} \lvert 1 \leq i < j \leq n\}$, we have $c(\mathbf{t}_{ij}) = X_n \setminus \{x_i\}$. 
\end{lemma}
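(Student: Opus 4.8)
The plan is to carry out everything inside $V := V(\mathbf{S})$, exploiting that the two-element semilattice $\mathbf{Sl}$ lies in $V$. Two elementary facts about $\mathbf{Sl}$ will be used throughout. First, \emph{content is a varietal invariant here}: if $V \models \mathbf{u} \approx \mathbf{v}$ then $c(\mathbf{u}) = c(\mathbf{v})$, since otherwise sending a variable of the symmetric difference of $c(\mathbf{u})$ and $c(\mathbf{v})$ to $0$ and all other variables to $1$ separates $\mathbf{u}$ and $\mathbf{v}$ in $\mathbf{Sl}$. Second, the term operation of a word $\mathbf{w}$ on $\mathbf{Sl}$ is $\bigwedge_{x_k \in c(\mathbf{w})} x_k$, so $\mathbf{t}_{ij}^{\mathbf{Sl}}$ depends on its $k$-th coordinate exactly when $x_k \in c(\mathbf{t}_{ij})$. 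Finally, write $f \colon S^n \to S$ for the operation induced by $\mathscr{F}$ on $\mathbf{S}$ (available since $n > \lvert S \rvert$); by our standing convention $f$ depends on all of its coordinates, and $f_{ij} = \mathbf{t}_{ij}^{\mathbf{S}}$.

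\emph{Step 1: transferring the absence of a variable between pairs.} I will first show that if $m \notin \{i,j\}$ and $i' < j'$ with $m \notin \{i',j'\}$, then $x_m \in c(\mathbf{t}_{ij}) \iff x_m \in c(\mathbf{t}_{i'j'})$. Indeed, the Consistency equation $V \models \mathbf{t}_{ij}^{(i'j')} \approx \mathbf{t}_{i'j'}^{(ij)}$ combined with content-invariance gives $c(\mathbf{t}_{ij}^{(i'j')}) = c(\mathbf{t}_{i'j'}^{(ij)})$; since $m$ differs from each of $i,j,i',j'$, neither of the substitutions $x_{i'}\mapsto x_{j'}$, $x_i \mapsto x_j$ affects whether $x_m$ occurs, and the equivalence follows.

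\emph{Step 2: every $x_m$ with $m \neq i$ occurs in $\mathbf{t}_{ij}$.} Suppose first $m \notin \{i,j\}$ and, for contradiction, $x_m \notin c(\mathbf{t}_{ij})$. By Step 1, $x_m \notin c(\mathbf{t}_{i'j'})$ for \emph{every} pair $i' < j'$ avoiding $m$. I claim $f$ then does not depend on its $m$-th coordinate, contradicting the standing convention. Given any $a \in S^n$, among the $n-1 > \lvert S \rvert$ entries $a_k$ with $k \neq m$ two must coincide, say $a_{i'} = a_{j'}$ with $i' < j'$ and $i',j' \neq m$; then $f(a) = f_{i'j'}(a) = \mathbf{t}_{i'j'}^{\mathbf{S}}(a)$, and this value is unchanged if $a_m$ is altered, because $x_m$ does not occur in $\mathbf{t}_{i'j'}$ and the coordinates $i',j'$ still coincide after the alteration. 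This settles $m \notin \{i,j\}$. For $m = j$, choose any third index $l \in \underline{n} \setminus \{i,j\}$ (possible as $n \geq 3$); a suitable Consistency equation between $(i,j)$ and the ordered version of $\{i,l\}$, together with content-invariance and the fact just proved that $x_j$ occurs in $\mathbf{t}_{il}$ (equivalently $\mathbf{t}_{li}$), forces $x_j$ to occur in $\mathbf{t}_{ij}$.

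\emph{Step 3: $x_i$ is absent, and conclusion.} After Step 2, $X_n \setminus \{x_i\} \subseteq c(\mathbf{t}_{ij})$, so $c(\mathbf{t}_{ij})$ is $X_n \setminus \{x_i\}$ or $X_n$. In the latter case $\mathbf{t}_{ij}^{\mathbf{Sl}} = x_1 \wedge \cdots \wedge x_n$ would depend on its $i$-th coordinate, contradicting the Dependency condition applied to $\mathbf{Sl} \in V$; hence $c(\mathbf{t}_{ij}) = X_n \setminus \{x_i\}$, as required. The step I expect to be the main obstacle is the reduction in Step 2 from ``$x_m$ is absent from every identification-minor word avoiding $m$'' to ``$f$ itself is independent of coordinate $m$'': this is precisely where the hypothesis $n > \lvert S \rvert + 1$ enters, guaranteeing via the pigeonhole principle that every input tuple already has a repeated entry outside coordinate $m$, so that $f$ there agrees with one of its minors. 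Everything else is routine manipulation of the Consistency equations and evaluation in $\mathbf{Sl}$.
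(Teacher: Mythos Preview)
The paper does not supply its own proof of this lemma; it is quoted directly from \cite{steindl2022} and \cite{Dolinka2018}. Your argument is correct and is essentially the standard one: use the semilattice $\mathbf{Sl}\in V(\mathbf{S})$ to see that content is preserved by identities of $V(\mathbf{S})$, use Consistency to propagate the absence of a variable $x_m$ to all $\mathbf{t}_{i'j'}$ with $m\notin\{i',j'\}$, and then use $n-1>\lvert S\rvert$ together with the pigeonhole principle to conclude that the induced operation $f$ would not depend on its $m$-th coordinate, contradicting the standing assumption on schemes. Step~3, ruling out $x_i\in c(\mathbf{t}_{ij})$ via Dependency evaluated in $\mathbf{Sl}$, is exactly right.

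One small point of presentation: in the $m=j$ subcase of Step~2 your phrase ``the ordered version of $\{i,l\}$'' and ``equivalently $\mathbf{t}_{li}$'' is slightly loose, since the paper only defines $\mathbf{t}_{kl}$ for $k<l$. It would be cleaner to pick \emph{two} indices $l<l'$ in $\underline{n}\setminus\{i,j\}$ (possible because $n\geq 4$), apply Consistency between $(i,j)$ and $(l,l')$, and note that $x_j\in c(\mathbf{t}_{ll'})$ by the case already handled. This avoids the need to track whether $i<l$ or $l<i$, though your version also goes through once the two orderings are checked separately.
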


Let $\mathbf{S}$ be a monoid and $\mathbf{t}(x_1,\dots,x_n)$ be a term of $\mathbf{S}$. Let $Y \subseteq X_n$. We denote the term obtained from $\mathbf{t}$ by deleting $X_n\setminus Y$ by $\mathbf{t}[Y]$. If $f:S^n \to S$ is an operation, then $f[Y]$ is the operation obtained by restricting $f$ to 
\[
\{(a_1,\dots,a_n) \in S^n \mid a_i = 1 \text{ if } x_i \in \{x_1,\dots,x_n\}\setminus Y\}.
\] 

Given a scheme $\mathscr{F} = \{\mathbf{t}_{ij} \lvert 1 \leq i < j \leq n\}$ for a monoid $\mathbf{S}$ that depends on all of its variables (with $n > \lvert S \rvert + 1$), the consistency condition and Lemma \ref{lemma_containssemilattice} ensures that 
\[
\mathbf{S} \models \mathbf{t}_{jk}[x_i] \approx \mathbf{t}_{jk}^{(lm)}[x_i]  \approx x_i^{e_i} \approx x_i^{f_i} \approx \mathbf{t}_{lm}^{(jk)}[x_i] \approx \mathbf{t}_{lm}[x_i]
\]
 for any $j,k,l,m \in \underline{n}\setminus\{i\}$ and some $e_i,f_i \in \mathbb{N}$. 
 
 Let $\mathbf{S}$ be a finite semigroup. The smallest positive integers $m$ and $r$ such that $\mathbf{S} \models x^m \approx x^{m+r}$ are known as the index and period of $\mathbf{S}$ respectively.
 It follows that for a monoid $\mathbf{S}$, the occurrences of variables in the equations of $\mathscr{F}$ must obey the index and period of $\mathbf{S}$ across different terms in that scheme. 
 
 Let $\underline{n} = \{1,\dots,n\}$ and $\text{occ}(x,\mathbf{w})$ be the number of occurrences of a variable $x$ in a word (or term) $\mathbf{w}$.
 
 To each variable $x_i$ in a scheme over $\mathbf{S}$, we may assign 
 \[
 e_i = \text{min}\{\text{occ}(x_i,\mathbf{t}_{jk}) \mid j,k \in \underline{n}\setminus\{i\}\},
 \]
 where $e_i$ is known as the \emph{variable exponent} of $x_i$ in $\mathscr{F}$.

A word $\mathbf{w}$ is an \emph{isoterm} for a semigroup $\mathbf{S}$ if $\mathbf{S} \models \mathbf{u} \approx \mathbf{w} \implies \mathbf{u} = \mathbf{w}$. 
 
 The proof of the following lemma closely follows \cite[Lemma 3.4]{steindl2022}.

 \begin{lemma}\label{lemma_occurences}
     Let $\mathbf{S}$ be a monoid such that $V(\mathbf{S})$ contains the variety of semilattices and $\mathbf{S}$ satisfies $x^{p} \approx x^{p+q}$ for minimal $p,q \in \mathbb{N}$. Let $\mathscr{F} = \{\mathbf{t}_{ij} \lvert 1 \leq i < j \leq n\}$ be a scheme for $\mathbf{S}$ with $n > \lvert S \rvert + 1$ and variable exponents $e_1,\dots,e_n$. Then 
     \begin{enumerate}[label=(\roman*)]
         \item for all $i \in \underline{n}$ and all $j,k \in \underline{n}\setminus\{i\}$ if $e_i < p$ then $\text{occ}(x_i,\mathbf{t}_{jk}) = e_i$ and $\text{occ}(x_i,\mathbf{t}_{jk}) \equiv e_i \pmod{q}$ otherwise;
         \item for all $i,j \in \underline{n}$ if $e_i + e_j < p$ then $\text{occ}(x_j,\mathbf{t}_{ij}) = e_i + e_j$ and $\text{occ}(x_j,\mathbf{t}_{ij}) \equiv e_i + e_j \pmod{q}$ otherwise.
     \end{enumerate}
 \end{lemma}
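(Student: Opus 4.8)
The plan is to establish (i) and (ii) in turn, in each case reducing a claim about occurrence counts to an identity of the shape $x^{a}\approx x^{b}$ and reading off the answer from the index $p$ and period $q$ of $\mathbf{S}$. I would open with two observations. First, since $V(\mathbf{S})$ contains the variety of semilattices, $\mathbf{S}$ is nontrivial, so $\lvert S\rvert\geq 2$ and therefore $n>\lvert S\rvert+1$ forces $n\geq 4$; thus given one or two prescribed indices there are always two further ones available, and by Lemma~\ref{lemma_containssemilattice} every variable other than the identified one occurs in each $\mathbf{t}_{ij}$, so all exponents in sight are at least $1$. Second, I would record the elementary fact that, for positive integers $a\leq b$, $\mathbf{S}\models x^{a}\approx x^{b}$ holds precisely when $a=b$, or else $a\geq p$ and $q\mid b-a$: the reverse implication is immediate from $\mathbf{S}\models x^{p}\approx x^{p+q}$, and for the forward one, writing $p_{s},q_{s}$ for the index and period of the monogenic submonoid $\langle s\rangle$, minimality of $p$ and $q$ makes $p$ the maximum of the $p_{s}$ and $q$ the least common multiple of the $q_{s}$, while for $a<b$ the equality $s^{a}=s^{b}$ forces $a\geq p_{s}$ and $q_{s}\mid b-a$ for every $s\in S$. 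In particular $\mathbf{S}\models x^{a}\approx x^{b}$ always forces $a\equiv b\pmod q$, and forces $a=b$ whenever $\min\{a,b\}<p$.

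For (i), fix $i$ and choose a pair $\{j_{0},k_{0}\}\subseteq\underline{n}\setminus\{i\}$ realising $\text{occ}(x_{i},\mathbf{t}_{j_{0}k_{0}})=e_{i}$. For an arbitrary pair $\{j,k\}\subseteq\underline{n}\setminus\{i\}$, the consequence of the consistency condition and Lemma~\ref{lemma_containssemilattice} recorded in the discussion before the lemma --- applied to the pairs $\{j,k\}$ and $\{j_{0},k_{0}\}$ and then with every variable but $x_{i}$ deleted, which is legitimate since $i$ lies in neither pair and hence the renamings do not alter the number of occurrences of $x_{i}$ --- yields $\mathbf{S}\models x_{i}^{\text{occ}(x_{i},\mathbf{t}_{jk})}\approx x_{i}^{e_{i}}$. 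As $\text{occ}(x_{i},\mathbf{t}_{jk})\geq e_{i}$ by minimality, the fact above forces $\text{occ}(x_{i},\mathbf{t}_{jk})=e_{i}$ when $e_{i}<p$, and $\text{occ}(x_{i},\mathbf{t}_{jk})\equiv e_{i}\pmod q$ otherwise.

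For (ii), fix $i<j$ and pick $k<l$ in $\underline{n}\setminus\{i,j\}$, which is possible since $n\geq 4$. Applying the consistency condition to the index-pairs $\{i,j\}$ and $\{k,l\}$ gives $\mathbf{S}\models\mathbf{t}_{ij}^{(kl)}\approx\mathbf{t}_{kl}^{(ij)}$; deleting every variable except $x_{j}$ and counting occurrences --- on the left the renaming $x_{k}\mapsto x_{l}$ does not affect the number of $x_{j}$'s, while on the right each $x_{i}$ becomes an $x_{j}$ --- gives
\[
\mathbf{S}\models x_{j}^{\text{occ}(x_{j},\mathbf{t}_{ij})}\approx x_{j}^{\,\text{occ}(x_{j},\mathbf{t}_{kl})+\text{occ}(x_{i},\mathbf{t}_{kl})}.
\]
Since $k,l\notin\{i,j\}$, part (i) applies to both summands on the right, so $\text{occ}(x_{i},\mathbf{t}_{kl})\equiv e_{i}$ and $\text{occ}(x_{j},\mathbf{t}_{kl})\equiv e_{j}\pmod q$ in general, with equality $\text{occ}(x_{i},\mathbf{t}_{kl})=e_{i}$ when $e_{i}<p$ and $\text{occ}(x_{j},\mathbf{t}_{kl})=e_{j}$ when $e_{j}<p$. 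If $e_{i}+e_{j}<p$, then $e_{i},e_{j}<p$, so the right-hand exponent is exactly $e_{i}+e_{j}$, itself $<p$, and the fact above forces $\text{occ}(x_{j},\mathbf{t}_{ij})=e_{i}+e_{j}$; otherwise the right-hand exponent is congruent to $e_{i}+e_{j}$ modulo $q$, whence so is $\text{occ}(x_{j},\mathbf{t}_{ij})$.

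I do not expect a genuine obstacle: this is the occurrence-count bookkeeping of \cite[Lemma~3.4]{steindl2022} transferred to the language of schemes, powered by the consistency condition, Lemma~\ref{lemma_containssemilattice}, and the monogenic-identity fact. The only care needed is in tracking occurrences through a variable renaming followed by deletions --- in particular keeping the renamed indices disjoint from the variable one wants to isolate --- and in checking that the auxiliary index-pairs $\{j_{0},k_{0}\}$ and $\{k,l\}$ can indeed be chosen disjoint from the prescribed indices, which is exactly what $n>\lvert S\rvert+1$ together with the nontriviality of $\mathbf{S}$ ensures.
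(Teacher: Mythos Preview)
Your proof is correct and follows essentially the same approach as the paper's: both arguments pick a witnessing pair realising the minimum $e_i$, invoke consistency and deletion to obtain an identity $x_i^{e_i}\approx x_i^{f}$, and then read off the conclusion from the index--period structure of $\mathbf{S}$; part (ii) is handled identically via an auxiliary pair $\{k,l\}$ disjoint from $\{i,j\}$. You are slightly more explicit than the paper in spelling out the characterisation of when $\mathbf{S}\models x^a\approx x^b$ holds and in justifying $n\geq 4$, but these are expository rather than structural differences.
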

\begin{proof}
    Suppose for the variable $x_i$ in $\mathscr{F}$ that $e_i < p$. Then there exists $l,m \in \underline{n}\setminus\{i\}$ such that $\mathbf{t}_{lm}[x_i] = x_i^{e_i}$. By the consistency condition, for any $j,k \in \underline{n}\setminus\{i\}$, if $\mathbf{t}_{jk}[x_i] = x_i^f$ then $\mathbf{S} \models x_i^{e_i} \approx x_i^f$. But $e_i$ is less than the index of $\mathbf{S}$, so $x_i^{e_i}$ is an isoterm for $\mathbf{S}$ and hence $f = e_i$. 
    
    Now suppose that $e_i \geq p$. Then by the same argument, if $\mathbf{t}_{jk}[x_i] = x_i^f$ then $\mathbf{S} \models x_i^{e_i} \approx x_i^f$. It follows that $f = \text{occ}(x_i,t_{kl}) \equiv e_i \pmod{q}$. This concludes the proof of (i).

    To prove (ii) suppose $e_i + e_j < p$. Then by (i), and the fact that $n > \lvert S \rvert + 1 \geq 3$, there exists $k,l \in \underline{n}\setminus\{i,j\}$ such that $\text{occ}(x_i,\mathbf{t}_{kl}) = e_i$ and $\text{occ}(x_j,\mathbf{t}_{kl}) = e_j$. It follows that $\text{occ}(x_j,\mathbf{t}_{kl}^{(ij)}) = e_i + e_j$.
    Let $f = \text{occ}(x_j,\mathbf{t}_{ij})$. Then, by the consistency condition, we have 
    \[
    x_j^f = \mathbf{t}_{ij}[x_j] \approx \mathbf{t}_{ij}^{(kl)}[x_j] \approx \mathbf{t}_{kl}^{(ij)}[x_j] = x_j^{e_i+e_j}. 
    \]
    But $e_i + e_j < p$ so $x_j^{e_i+e_j}$ is an isoterm for $\mathbf{S}$, hence $f = e_i + e_j$ as required. 

    If $e_i + e_j > p$, then $f = \text{occ}(x_j,\mathbf{t}_{ij}) \equiv e_i + e_j \pmod q$. This concludes the proof of part (ii) and the proof of the lemma.     
\end{proof}

For a nilpotent monoid, the value of $q$ in Lemma \ref{lemma_occurences} is $1$, and $p$ is the smallest integer such that $\mathbf{S}\setminus\{1\} \models x^p \approx 0$.

Given a word $\mathbf{w}$, a variable $x \in c(\mathbf{w}) \subseteq A$ is said to be \emph{primitive} in $\mathbf{w}$ with respect to a nilpotent monoid $\mathbf{S}$ if under any assignment $\theta:A \to \mathbf{S}$ either $\theta(x) = 1$ or $\theta(\mathbf{w}) = 0$. Given a nilpotent monoid $\mathbf{S}$, we will denote the primitive letters of a word $\mathbf{w}$ with respect to $\mathbf{S}$ by $\text{Prim}(\mathbf{w})$. 

\begin{definition}
Let $\mathbf{S}$ be a monoid satisfying 
\[
\mathscr{A}_{\alpha,\beta} := \{x^\alpha \approx x^{\alpha+\beta}, t_1xt_2x\dots t_\alpha x \approx x^\alpha t_1t_2\dots t_\alpha\}
\]
with $\alpha,\beta \in \mathbb{N}$ minimal with respect to $\mathbf{S} \models \mathscr{A}_{\alpha,\beta}$. Then, for any term $\mathbf{t}$ of $\mathbf{S}$, a variable that appears at least $\alpha$ times in $\mathbf{t}$ is said to be \emph{strongly primitive} in $\mathbf{t}$. 
\end{definition}

We denote $\mathscr{A}_{\alpha,1}$ simply by $\mathscr{A}_{\alpha}$.

Given a monoid $\mathbf{S}$ satisfying $\mathscr{A}_{\alpha,\beta}$, we denote the strongly primitive letters of a word $\mathbf{u}$ by $\text{Prim}_{st}(\mathbf{u})$. If $\mathscr{F} = \{\mathbf{t}_{ij} \mid 1 \leq i < j \leq n\}$ is a scheme for $\mathbf{S}$ then define
\[
\text{Prim}_{st}(\mathscr{F}) = \{x_k \mid \exists i,j \in \underline{n}\setminus\{k\} \text{ such that } x_k \in \text{Prim}_{st}(\mathbf{t}_{ij})\}.
\]
Lemma \ref{lemma_occurences} ensures that if $x_k \in \text{Prim}_{st}(\mathscr{F})$ then $x_k$ is strongly primitive in all terms in $\mathscr{F}$ except the terms which do not depend on $x_k$. 

\begin{lemma}\label{lemma_schemeprimitive}
    Let $\mathbf{S}$ be a monoid satisfying $\mathscr{A}_{\alpha,\beta}$. Let $\mathscr{F} = \{\mathbf{t}_{ij} \lvert 1 \leq i < j \leq n\}$ be a scheme for $\mathbf{S}$ with $n > \lvert S \rvert + 1$ and variable exponents $e_1,\dots,e_n$. Let $f:S^n \to S$ be the operation determined by $\mathscr{F}$. Then 
    \[
    f = f[X_n\setminus\textup{Prim}_{st}(\mathscr{F})] \cdot \prod_{x_k \in \textup{Prim}_{st}(\mathscr{F})}x_k^{e_k}.
    \]
\end{lemma}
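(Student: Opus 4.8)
The plan is to prove the claimed factorisation first as a term identity for every $\mathbf{t}_{ij}$ in $\mathscr{F}$, and then promote it to the operation $f$ by exploiting the fact that an $n$-ary scheme with $n>\lvert S\rvert$ determines its induced operation uniquely.

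The first step is to distil the word-level consequences of $\mathscr{A}_{\alpha,\beta}$. Substituting $t_1\mapsto w$ and $t_2,\dots,t_\alpha\mapsto\varepsilon$ into the law $t_1xt_2x\cdots t_\alpha x\approx x^\alpha t_1\cdots t_\alpha$ shows that $\mathbf{S}\models wx^\alpha\approx x^\alpha w$ for every word $w$, so every $\alpha$-th power is central in $\mathbf{S}$. From this, a short induction — grouping $\alpha$ occurrences of a variable together with the collapsing law and then sliding the resulting central block $x^\alpha$ out of the way — yields two facts: (a) $x^e$ and $y^f$ commute in $\mathbf{S}$ whenever $e,f\geq\alpha$; and (b) if a variable $x$ occurs $m\geq\alpha$ times in a word $\mathbf{w}$, then $\mathbf{S}\models\mathbf{w}\approx\mathbf{w}[c(\mathbf{w})\setminus\{x\}]\cdot x^m$. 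Peeling off the strongly primitive variables of $\mathbf{w}$ one at a time using (b), and using (a) to put the resulting powers in a canonical order, gives: if every variable of a set $Y$ occurs at least $\alpha$ times in $\mathbf{w}$, then $\mathbf{S}\models\mathbf{w}\approx\mathbf{w}[c(\mathbf{w})\setminus Y]\cdot\prod_{y\in Y}y^{\text{occ}(y,\mathbf{w})}$.

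The second step applies this to $\mathscr{F}$. Since $n>\lvert S\rvert+1$, Lemma~\ref{lemma_containssemilattice} gives $c(\mathbf{t}_{ij})=X_n\setminus\{x_i\}$, and the remark following the definition of $\textup{Prim}_{st}(\mathscr{F})$ (which rests on Lemma~\ref{lemma_occurences}) tells us that every $x_k\in\textup{Prim}_{st}(\mathscr{F})$ has $e_k\geq\alpha$ and occurs at least $\alpha$ times in every $\mathbf{t}_{ij}$ with $i\neq k$, and moreover $\text{occ}(x_k,\mathbf{t}_{ij})\equiv e_k\pmod{\beta}$ there, so that $\mathbf{S}\models x_k^{\text{occ}(x_k,\mathbf{t}_{ij})}\approx x_k^{e_k}$. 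Feeding $Y=\textup{Prim}_{st}(\mathscr{F})\setminus\{x_i\}$ into the word identity above and replacing each collected power by $x_k^{e_k}$ then yields, for all $1\leq i<j\leq n$,
\[
\mathbf{S}\models\mathbf{t}_{ij}\approx\mathbf{t}_{ij}\bigl[X_n\setminus\textup{Prim}_{st}(\mathscr{F})\bigr]\cdot\!\!\!\prod_{x_k\in\textup{Prim}_{st}(\mathscr{F})\setminus\{x_i\}}\!\!\!x_k^{e_k}.
\]

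The final step is the promotion to $f$. Writing $P=\textup{Prim}_{st}(\mathscr{F})$ and $h=f[X_n\setminus P]\cdot\prod_{x_k\in P}x_k^{e_k}$, the goal is $h=f$; since $n>\lvert S\rvert$, by uniqueness of the operation induced by an $n$-ary scheme (\cite[Lemma~2.6]{Davey2011}) it suffices to check $\mathbf{S}\models h_{ij}\approx\mathbf{t}_{ij}$ for every $i<j$. Computing $h_{ij}$, the identification distributes over the product, sending $\prod_{x_k\in P}x_k^{e_k}$ to $\prod_{x_k\in P\setminus\{x_i\}}x_k^{e_k}$ (with, when $x_i\in P$, an extra factor $x_j^{e_i}$), while on $f[X_n\setminus P]$ it produces the corresponding identification minor of that restriction; comparing with the displayed term identity reduces everything to the assertion that the identification minors of $f[X_n\setminus P]$ are the functions $\mathbf{t}_{ij}[X_n\setminus P]^{\mathbf{S}}$, with the possible extra factor $x_j^{e_i}$ handled because — by the consistency of $\mathscr{F}$, together with $x_i$ being strongly primitive — it is forced to act trivially on the values that arise. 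I expect this step to be the main obstacle: one must justify that $f[X_n\setminus P]$ coincides with the operation induced by the scheme obtained from $\mathscr{F}$ by deleting the strongly primitive variables, and that the pigeonhole choice of which identification minor to evaluate, when one of $i,j$ lies in $P$, genuinely absorbs the extra power; both points lean on the consistency of $\mathscr{F}$ and on the slack $n>\lvert S\rvert+1$.
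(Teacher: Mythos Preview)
Your overall strategy --- derive a term-level factorisation for each $\mathbf{t}_{ij}$ and then promote it to $f$ by uniqueness --- is sound, and Steps~1 and~2 are essentially right. But two concrete problems keep the argument from closing.

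First, a small error in Step~2: your claim that $\text{occ}(x_k,\mathbf{t}_{ij})\equiv e_k\pmod\beta$ for every $x_k\in\textup{Prim}_{st}(\mathscr{F})\setminus\{x_i\}$ fails at $k=j$. Lemma~\ref{lemma_occurences}(ii) gives $\text{occ}(x_j,\mathbf{t}_{ij})\equiv e_i+e_j\pmod\beta$, not $e_j$; so when $x_j\in\textup{Prim}_{st}(\mathscr{F})$ your displayed identity has the wrong exponent on $x_j$ unless $e_i\equiv 0\pmod\beta$ (which holds for nilpotent monoids, where $\beta=1$, but not in general). Ironically, correcting this is exactly what absorbs the ``extra factor $x_j^{e_i}$'' you worry about in Step~3 when $x_i\in P$.

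Second, and more seriously, your Step~3 plan of checking $h_{ij}=\mathbf{t}_{ij}^{\mathbf S}$ for \emph{all} $i<j$ genuinely stalls in the mixed case $i\notin P$, $j\in P$ (and its mirror). There one must compare $f(\tilde a\text{ with }i\text{-th coord }a_j)$ against $f(\tilde a\text{ with }i\text{-th coord }1)\cdot a_j^{e_i}$, and since $e_i<\alpha$ the occurrences of $x_i$ cannot be pulled out of the relevant term; consistency of $\mathscr{F}$ alone does not force this. The paper sidesteps this by splitting on $\lvert P\rvert$: when $\lvert P\rvert\leq 1$ the slack $n>\lvert S\rvert+1$ lets one choose the pigeonhole pair $i,j$ \emph{outside} $P$, where your argument goes through verbatim; when $\lvert P\rvert\geq 2$ the paper fixes $i,j\in P$ and proves the stronger statement that $\mathscr{F}$ comes from the term $\mathbf{t}_{ij}[X_n\setminus P]\cdot\prod_{k\in P}x_k^{e_k}$, after which the desired factorisation of $f$ is immediate. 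That case analysis (in particular the $e_k+e_l\geq\alpha$ subcase) is the missing ingredient in your proposal.

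Finally, you invoke Lemma~\ref{lemma_containssemilattice} and Lemma~\ref{lemma_occurences} without first disposing of the possibility that $V(\mathbf S)$ omits semilattices; the paper handles this in one line (such an $\mathbf S$ is an abelian group), but it should be said.
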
 
\begin{proof}
    First suppose $V(\mathbf{S})$ does not contain the variety of semilattices. Then $\mathbf{S}$ has no idempotents other than the identity so is a group. It follows from the second law in $\mathscr{A}_{\alpha,\beta}$ that $\mathbf{S}$ is an abelian group whence the lemma's statement holds trivially by the fact $\mathscr{F}$ comes from a term by \cite[Theorem 3.6]{Davey2011}. We may then assume that $V(\mathbf{S})$ contains the variety of semilattices. 
    
    If $\text{Prim}_{st}(\mathscr{F}) = \varnothing$ then there is nothing to prove.
    Now assume there is a single strongly primitive variable in the scheme $\mathscr{F}$. Without loss of generality suppose $\text{Prim}_{st}(\mathscr{F}) = \{x_n\}$. Then for any $(a_1,\dots,a_n) \in S^n$ there exists distinct $i,j \in \underline{n-1}$ such that $a_i = a_j$. Then
\begin{align*}
    f(a_1,\dots,a_{n-1},1)\cdot a_n^{e_n} &= \mathbf{t}_{ij}(a_1,\dots,a_{n-1},1)\cdot a_n^{e_n} \\
    &= \mathbf{t}_{ij}(a_1,\dots,a_n) \\
    &=f(a_1,\dots,a_n),
\end{align*}
by Lemma \ref{lemma_occurences} and the fact that $\mathbf{S} \models \mathscr{A}_{\alpha,\beta}$ with $e_n \geq \alpha$.

Now we consider the remaining case where there exist distinct variables $x_i,x_j \in \text{Prim}_{st}(\mathscr{F})$. We claim that $\mathscr{F}$ comes from the term 
\[
\mathbf{w} = \mathbf{t}_{ij}[X_n\setminus\text{Prim}_{st}(\mathscr{F})]\cdot \mathbf{s},
\]
where $\mathbf{s} = \prod_{x_m \in \text{Prim}_{st}(\mathscr{F})}x_m^{e_m}$.
Take any distinct $k,l \in \underline{n}$. First suppose that $e_k + e_l < \alpha$ implying that neither $x_k$ nor $x_l$ is strongly primitive. Then 
\begin{align*}
    (\mathbf{t}_{ij}[X_n\setminus\text{Prim}_{st}(\mathscr{F})])^{(kl)} &\approx \mathbf{t}_{ij}^{(kl)}[X_n\setminus\text{Prim}_{st}(\mathscr{F})] \\
    &\approx \mathbf{t}_{kl}^{(ij)}[X_n\setminus\text{Prim}_{st}(\mathscr{F})] \\
    &\approx \mathbf{t}_{kl}[X_n\setminus\text{Prim}_{st}(\mathscr{F})],
\end{align*}
since $x_i,x_j \in \text{Prim}_{st}(\mathscr{F})$. It follows that 
\[
\mathbf{w}^{(kl)} \approx (\mathbf{t}_{ij}[X_n\setminus\text{Prim}_{st}(\mathscr{F})])^{(kl)}\cdot \mathbf{s}^{(kl)} \approx \mathbf{t}_{kl}[X_n\setminus\text{Prim}_{st}(\mathscr{F})]\cdot \mathbf{s}^{(kl)},
\]
and thus $\mathbf{w}^{(kl)} \approx \mathbf{t}_{kl}$.

Now suppose $e_k + e_l \geq \alpha$ and let $Y = \text{Prim}_{st}(\mathscr{F})\cup\{x_k,x_l\}$. Then, by the consistency of $\mathscr{F}$ and the fact that $x_i,x_j,x_k,x_l \in Y$, we have
\begin{equation}\label{eqn_primscheme1}
(\mathbf{t}_{kl}[X_n\setminus Y])^{(ij)} \approx \mathbf{t}_{kl}[X_n\setminus Y] \approx \mathbf{t}_{ij}[X_n\setminus Y] \approx (\mathbf{t}_{ij}[X_n\setminus Y])^{(kl)}.    
\end{equation}

Let $p = \text{occ}(x_l,(\mathbf{t}_{ij}[X_n\setminus\text{Prim}_{st}(\mathscr{F})])^{(kl)})$ and notice
\[
p = \begin{cases}
    e_k + e_l &\text{if } x_k\not\in \text{Prim}_{st}(\mathscr{F}), x_l \not\in \text{Prim}_{st}(\mathscr{F}), \\
    e_k &\text{if } x_k\not\in \text{Prim}_{st}(\mathscr{F}), x_l \in \text{Prim}_{st}(\mathscr{F}), \\
    e_l &\text{if } x_k\in \text{Prim}_{st}(\mathscr{F}), x_l \not\in \text{Prim}_{st}(\mathscr{F}), \\
    0 &\text{if } x_k\in \text{Prim}_{st}(\mathscr{F}), x_l \in \text{Prim}_{st}(\mathscr{F}).
\end{cases}
\]
Then 
\[
\mathbf{w}^{(kl)} \approx (\mathbf{t}_{ij}[X_n\setminus\text{Prim}_{st}(\mathscr{F})])^{(kl)}\cdot \mathbf{s}^{(kl)} \approx (\mathbf{t}_{ij}[X_n\setminus Y])^{(kl)}\cdot x_l^p \cdot \mathbf{s}^{(kl)},  
\]
where $x_l^0$ is the empty word. 

By (\ref{eqn_primscheme1}) we get 
\[
(\mathbf{t}_{ij}[X_n\setminus Y])^{(kl)}\cdot x_l^p \cdot \mathbf{s}^{(kl)} \approx \mathbf{t}_{kl}[X_n\setminus Y]\cdot x_l^p \cdot \mathbf{s}^{(kl)}.
\]
But $e_k + e_l \geq \alpha$ and thus $\mathbf{t}_{kl} \approx \mathbf{t}_{kl}[X_n\setminus Y]\cdot x_l^p \cdot \mathbf{s}^{(kl)}$. Therefore $\mathbf{w}^{(kl)} \approx \mathbf{t}_{kl}$ as required.

Finally, since $f$ is a term function and $\mathbf{S} \models \mathscr{A}_{\alpha,\beta}$ we obtain
\[
    f = f[X_n\setminus\text{Prim}_{st}(\mathscr{F})]\cdot \prod_{x_k \in \text{Prim}_{st}(\mathscr{F})}x_k^{e_k}.\qedhere
\]
\end{proof}

\section{Nilpotent monoids built from words.}

Let $A$ be a set and let $W$ be a set of words in the free monoid $A^*$. Let $M(W)$ be the Rees quotient $A^*/I(W)$ where $I(W)$ is the ideal consisting of all words in $A^*$ which are not subwords of words in $W$. It is easy to see that $M(W)$ is a nilpotent monoid whose universe consists of all subwords of words in $W$ (along with the empty word and $0$) and whose binary operation $\cdot$ is defined by 
\[
    u \cdot v = 
    \begin{cases}
    uv & \text{ if $uv$ is a subword of a word in $W$, } \\
    0 & \text{ otherwise.}
    \end{cases}
\]

This construction, usually attributed to Dilworth \cite{morse1944unending}, was employed by Perkins to show the existence of non-finitely based semigroups \cite{Perkins1969}. Since then, constructing nilpotent monoids from words has proven to be instrumental in the exploration of the finite basis problem for semigroups. In addition, this construction has been used to show that the finite basis property is not preserved by direct products \cite{jackson2000finitely} nor by adjoining an identity element \cite{lee2013finitely,Perkins1969}.

Let $\kappa \in \mathbb{N}$. A word $\mathbf{w}$ is \emph{$\kappa$-limited} if $\text{occ}(x,\mathbf{w}) \leq \kappa$ for every letter $x \in c(\mathbf{w})$. Let $A$ be a non-empty set. We denote the set of $\kappa$-limited words in $A^*$ by $A_\kappa$.

The next result concerning $M(A_\kappa)$ is reminiscent of Steindl’s \cite[Theorem 2.4]{steindl2022} for the free nilpotent monoid, though was obtained by the author independently and prior to the posting of \cite{steindl2022}. 

\begin{prop}\label{prop_mak}
    For any words $\mathbf{u}$ and $\mathbf{v}$, $M(A_\kappa) \models \mathbf{u} \approx \mathbf{v}$ if and only if $\textup{Prim}_{st}(\mathbf{u}) = \textup{Prim}_{st}(\mathbf{v})$ and $\mathbf{u}[c(\mathbf{u})\setminus \textup{Prim}_{st}(\mathbf{u})] = \mathbf{v}[c(\mathbf{v})\setminus \textup{Prim}_{st}(\mathbf{v})]$.
\end{prop}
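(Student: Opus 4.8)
The plan is to prove both directions by direct substitution into $M(A_\kappa)$, using only the defining multiplication rule: a product $w_1\cdots w_m$ of elements of $M(A_\kappa)$ equals the concatenation $w_1\cdots w_m$ if that concatenation is $\kappa$-limited, and equals $0$ otherwise. First I would record what ``strongly primitive'' means here. The monoid $M(A_\kappa)$ satisfies $\mathscr{A}_{\kappa+1}$: the law $x^{\kappa+1}\approx x^{\kappa+2}$ holds since both sides evaluate to $0$, and the law $t_1xt_2x\cdots t_{\kappa+1}x \approx x^{\kappa+1}t_1\cdots t_{\kappa+1}$ holds because, under any assignment, if $x$ receives a nonempty word then both sides contain at least $\kappa+1$ copies of some letter and collapse to $0$, while if $x$ receives the empty word the two sides are literally equal. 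Moreover $\kappa+1$ is the least exponent for which the first law holds, since $x^{\kappa}$ is a nonzero element distinct from $x^{\kappa+1}=0$. Hence, with respect to $M(A_\kappa)$, a letter is strongly primitive in a word exactly when it occurs at least $\kappa+1$ times. (If $\kappa=0$ then $M(A_\kappa)$ is the two-element semilattice and the claim is trivial, so I would take $\kappa\geq 1$.)

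For the forward direction, assume $M(A_\kappa)\models\mathbf{u}\approx\mathbf{v}$. Fix a letter $a\in A$, and for each variable $x$ use the substitution sending $x\mapsto a$ and every other variable to the empty word; it sends a word $\mathbf{w}$ to $a^{\text{occ}(x,\mathbf{w})}$, where $a^m=0$ for $m>\kappa$. Since $1,a,\dots,a^{\kappa},0$ are pairwise distinct in $M(A_\kappa)$, the identity $a^{\text{occ}(x,\mathbf{u})}=a^{\text{occ}(x,\mathbf{v})}$ forces either $\text{occ}(x,\mathbf{u})=\text{occ}(x,\mathbf{v})\leq\kappa$, or else $\text{occ}(x,\mathbf{u})$ and $\text{occ}(x,\mathbf{v})$ both exceed $\kappa$. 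Letting $x$ vary, this already gives $c(\mathbf{u})=c(\mathbf{v})$ and $\text{Prim}_{st}(\mathbf{u})=\text{Prim}_{st}(\mathbf{v})=:P$. To recover the order of the remaining letters I would apply the substitution $\theta$ that fixes every variable outside $P$ and sends every variable of $P$ to the empty word. Each letter of $\mathbf{u}[c(\mathbf{u})\setminus P]$ occurs at most $\kappa$ times, so that word is $\kappa$-limited and therefore $\theta(\mathbf{u})=\mathbf{u}[c(\mathbf{u})\setminus P]$ as a genuine (nonzero) element of $M(A_\kappa)$, and likewise $\theta(\mathbf{v})=\mathbf{v}[c(\mathbf{v})\setminus P]$; the hypothesis then yields $\mathbf{u}[c(\mathbf{u})\setminus P]=\mathbf{v}[c(\mathbf{v})\setminus P]$ as words.

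For the converse, suppose $\text{Prim}_{st}(\mathbf{u})=\text{Prim}_{st}(\mathbf{v})=P$ and $\mathbf{u}[c(\mathbf{u})\setminus P]=\mathbf{v}[c(\mathbf{v})\setminus P]$, noting that this forces $c(\mathbf{u})=c(\mathbf{v})$. Given any $\psi\colon A\to M(A_\kappa)$, I would distinguish three cases. If $\psi$ sends some variable of $c(\mathbf{u})$ to $0$, then $\psi(\mathbf{u})=0=\psi(\mathbf{v})$. Otherwise every variable of $c(\mathbf{u})$ is sent to a $\kappa$-limited word; if some variable $x\in P$ is sent to a nonempty word then, since $x$ occurs at least $\kappa+1$ times in each of $\mathbf{u}$ and $\mathbf{v}$, the concatenations defining $\psi(\mathbf{u})$ and $\psi(\mathbf{v})$ each contain at least $\kappa+1$ occurrences of some letter, so $\psi(\mathbf{u})=0=\psi(\mathbf{v})$; and if every variable of $P$ is sent to the empty word, then $\psi(\mathbf{u})=\psi(\mathbf{u}[c(\mathbf{u})\setminus P])$ and $\psi(\mathbf{v})=\psi(\mathbf{v}[c(\mathbf{v})\setminus P])$, which coincide because the two deleted words do. As these cases are exhaustive, $M(A_\kappa)\models\mathbf{u}\approx\mathbf{v}$.

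I do not expect a genuine obstacle: the argument reduces to a case analysis over the three types of value a variable can take in $M(A_\kappa)$, namely the zero, the empty word, and a nonempty $\kappa$-limited word. The one spot calling for care is the final substitution in the forward direction, where one must check that $\mathbf{u}[c(\mathbf{u})\setminus P]$ is genuinely $\kappa$-limited so that $\theta$ does not secretly send $\mathbf{u}$ to $0$; but this is immediate, as $P$ is precisely the set of letters of $\mathbf{u}$ that occur more than $\kappa$ times.
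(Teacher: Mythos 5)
Your characterization of strong primitivity in $M(A_\kappa)$ (a letter is strongly primitive precisely when it occurs at least $\kappa+1$ times) and your proof of the backward implication are both correct. The paper itself disposes of the whole proposition in one line by invoking the fact that any word without strongly primitive letters is an isoterm for $M(A_\kappa)$; your argument is an attempt to supply exactly that content.

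The gap is in the forward direction, at the substitution $\theta$ that ``fixes every variable outside $P$''. This silently identifies the variables of $\mathbf{u}$ with distinct letters of $A$. In the generality in which the proposition is stated and used --- in Theorem \ref{thm_makfinitelyrelated} it is applied to terms over $X_n$ with $n > \lvert M(A_\kappa)\rvert + 1$, while $A$ is a fixed finite alphabet --- there are far more non--strongly-primitive variables than letters of $A$, and no single substitution can do the job: if every variable of $c(\mathbf{u})\setminus P$ is sent to a nonempty word over $A$, the resulting concatenation has length at least $\lvert c(\mathbf{u})\setminus P\rvert$ over the alphabet $A$, so as soon as $\lvert c(\mathbf{u})\setminus P\rvert > \kappa\lvert A\rvert$ some letter occurs more than $\kappa$ times and both sides evaluate to $0$, which tells you nothing. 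The repair is the standard two-letter reduction: having shown (via your first substitution) that $c(\mathbf{u})=c(\mathbf{v})$ and that the occurrence counts of all non--strongly-primitive letters agree, suppose $\mathbf{u}[c(\mathbf{u})\setminus P]\neq\mathbf{v}[c(\mathbf{v})\setminus P]$; then some pair of occurrences ${}_px,{}_qy$ with $x\neq y$ and $x,y\notin P$ appears in opposite orders in the two words, and substituting $x\mapsto a$, $y\mapsto b$ for distinct $a,b\in A$ and every other variable to $1$ produces two distinct nonzero ($\kappa$-limited) elements of $M(A_\kappa)$, a contradiction. Note that this repair, and indeed the proposition itself, requires $\lvert A\rvert\geq 2$: for $\lvert A\rvert=1$ the monoid is commutative and satisfies $xy\approx yx$ although $xy\neq yx$ and neither letter is strongly primitive, so the statement carries an implicit hypothesis that your write-up (like the paper's) does not surface.
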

\begin{proof}
Follows immediately from the property that any term for $M(A_\kappa)$ without strongly primitive variables is an isoterm. 
\end{proof}

\begin{theorem}\label{thm_makfinitelyrelated}
Let $A$ be a finite set. Then $M(A_\kappa)$ is finitely related with term degree at most $\text{max}(\lvert M(A_\kappa) \rvert + 1, 4)$.
\end{theorem}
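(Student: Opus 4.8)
The plan is to apply Theorem~\ref{thm_frconditions}(1): I will take an arbitrary $n$-ary scheme $\mathscr{F} = \{\mathbf{t}_{ij} \mid 1 \leq i < j \leq n\}$ for $V(M(A_\kappa))$ with $n > \max(\lvert M(A_\kappa)\rvert + 1, 4)$ and show it comes from a term. The key observation is that $M(A_\kappa)$ satisfies $\mathscr{A}_{\alpha,\beta}$ for suitable minimal $\alpha,\beta$ — indeed $\beta = 1$ since the monoid is nilpotent, and $\alpha = \kappa + 1$ since a variable occurring more than $\kappa$ times annihilates any word — so Lemma~\ref{lemma_schemeprimitive} applies. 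That lemma reduces the problem: the operation $f$ determined by $\mathscr{F}$ factors as $f[X_n \setminus \mathrm{Prim}_{st}(\mathscr{F})] \cdot \prod_{x_k \in \mathrm{Prim}_{st}(\mathscr{F})} x_k^{e_k}$, so it suffices to produce a term for the "reduced" scheme obtained by deleting all strongly primitive variables, i.e. the scheme $\{\mathbf{t}_{ij}[X_n \setminus \mathrm{Prim}_{st}(\mathscr{F})]\}$, whose terms are all $\kappa$-limited with no strongly primitive letters.

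Next I would exploit Proposition~\ref{prop_mak}: a term for $M(A_\kappa)$ with no strongly primitive variables is an isoterm, hence is uniquely determined up to the identity of $M(A_\kappa)$ by its underlying word (after deleting the identified variable, as guaranteed by Lemma~\ref{lemma_containssemilattice}). So for each pair $i<j$, the reduced term $\mathbf{t}_{ij}[X_n \setminus \mathrm{Prim}_{st}(\mathscr{F})]$ is, as a word, a $\kappa$-limited word on the variable set $(X_n \setminus \{x_i\}) \setminus \mathrm{Prim}_{st}(\mathscr{F})$, and by Lemma~\ref{lemma_occurences}(ii) the number of occurrences of each remaining variable is exactly its variable exponent (these are all $< \alpha = \kappa+1$). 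The task becomes combinatorial: show that a consistent family of $\kappa$-limited isoterm-words, one for each identification $x_i \mapsto x_j$, must all be identifications $x_i \mapsto x_j$ of a single fixed $\kappa$-limited word $\mathbf{s}$ on $X_n$. I would build $\mathbf{s}$ by choosing, for a fixed pair say $\{1,2\}$, the word $\mathbf{t}_{12}$ (reduced), which misses only $x_1$, and then "reinserting" $x_1$ in the position dictated by some $\mathbf{t}_{1k}$ for $k \geq 3$ — consistency ($\mathbf{t}_{12}^{(1k)} \approx \mathbf{t}_{1k}^{(12)}$, etc.) forces these insertions to be coherent, and the high arity $n > 4$ gives enough "spare" variables to triangulate the relative order of $x_1$ against every other letter.

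The main obstacle, and the technical heart of the argument, is this last combinatorial step: verifying that consistency of the reduced scheme genuinely pins down a total linear arrangement of occurrences. For ordinary words over a free monoid this is the kind of linear-order reconstruction argument Steindl uses, but here I must be careful that passing to the Rees quotient $M(A_\kappa)$ does not lose information — this is exactly where the isoterm property from Proposition~\ref{prop_mak} is essential, since it means the equational theory restricted to strongly-primitive-free $\kappa$-limited words coincides with word equality, so the free-monoid reconstruction carries over verbatim. I would then reassemble: set $\mathbf{w} = \mathbf{s} \cdot \prod_{x_k \in \mathrm{Prim}_{st}(\mathscr{F})} x_k^{e_k}$ and check, using Lemma~\ref{lemma_schemeprimitive}'s factorization together with $\mathbf{s}^{(ij)} \approx \mathbf{t}_{ij}[X_n\setminus\mathrm{Prim}_{st}(\mathscr{F})]$, that $\mathbf{w}^{(ij)} \approx \mathbf{t}_{ij}$ for all $i<j$, so $\mathscr{F}$ comes from $\mathbf{w}$. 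Finally I would account for the bound: the arity threshold $\max(\lvert M(A_\kappa)\rvert + 1, 4)$ comes from needing $n > \lvert S \rvert + 1$ for Lemmas~\ref{lemma_containssemilattice} and~\ref{lemma_occurences}, and $n > 4$ (equivalently $n \geq 5$) to have enough free coordinates for the order-reconstruction when $\mathrm{Prim}_{st}(\mathscr{F})$ is large, plus the degenerate group case already dispatched inside Lemma~\ref{lemma_schemeprimitive}.
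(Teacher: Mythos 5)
Your proposal follows essentially the same route as the paper's proof: reduce via Lemma~\ref{lemma_schemeprimitive} to the part of the scheme with strongly primitive variables deleted, use Proposition~\ref{prop_mak} so that the reduced restrictions are isoterms, reconstruct a linear order on \emph{occurrences} from the pairwise restrictions (with transitivity supplied by three-variable restrictions, which is where the arity bound enters), and reassemble the term. Two small slips in the sketch are worth flagging. First, you cannot ``reinsert $x_1$ in the position dictated by some $\mathbf{t}_{1k}$'': by dependency and Lemma~\ref{lemma_containssemilattice}, $c(\mathbf{t}_{1k}) = X_n\setminus\{x_1\}$, so these terms contain no occurrence of $x_1$ at all; the position of $x_1$ must be read off from restrictions $f[x_1,x_j]$, i.e.\ from terms $\mathbf{t}_{kl}$ with $k,l\neq 1$, which is exactly how the paper defines its order $\prec$ on occurrences. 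Second, the reduced terms $\mathbf{t}_{ij}[X_n\setminus\textup{Prim}_{st}(\mathscr{F})]$ need not be $\kappa$-limited: even when neither $x_i$ nor $x_j$ lies in $\textup{Prim}_{st}(\mathscr{F})$, the identified variable $x_j$ occurs $e_i+e_j$ times in $\mathbf{t}_{ij}$ by Lemma~\ref{lemma_occurences}(ii), and this may exceed $\kappa$, so $x_j$ can become strongly primitive after identification; the paper treats this case separately (via Proposition~\ref{prop_mak}) when verifying $\mathbf{w}^{(ij)}\approx\mathbf{t}_{ij}[Y]$, and your verification step would need the same case split. Neither issue changes the overall strategy, which is sound and matches the paper's.
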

\begin{proof}
We may assume that $\lvert A \rvert \geq 2$ since if $\lvert A \rvert = 1$ then $M(A_\kappa)$ is commutative and finitely related by \cite[Theorem 3.6]{Davey2011} with term degree at most $\text{max}(\lvert M(A_\kappa) \rvert,4)$.

Assume $n > \lvert M(A_\kappa) \rvert + 1$ and let $\mathscr{F} = \{\mathbf{t}_{ij} \mid 1 \leq i < j \leq n\}$ be a scheme for $M(A_\kappa)$ with variable exponents $e_1,\dots,e_n$ that determines an operation $f:M(A_\kappa)^n \to M(A_\kappa)$. 

Let $Y = X_n \setminus \text{Prim}_{st}(\mathscr{F})$. Define a set of new variables 
\[
    \overline{Y} = \{{}_px_i \mid x_i \in Y, 1 \leq p \leq e_i\}.
\]
Since $Y$ contains only non-strongly primitive variables, if $x_i,x_j \in Y$, and $f[x_i,x_j]$ is induced by a term $\mathbf{s}$ then $\mathbf{s}$ is an isoterm for $M(A_\kappa)$ and hence is the unique term inducing $f[x_i,x_j]$. With this in mind, define $\prec$ on $\overline{Y}$ by ${}_px_i \prec {}_qx_j$ if the $p$-th occurrence of $x_i$ occurs before the $q$-th occurrence of $x_j$ in the term inducing $f[x_i,x_j]$. Since each term $f[x_i,x_j]$ is an isoterm, $\prec$ is anti-symmetric and connected. Transitivity is established by the fact that $n > \lvert M(A_\kappa) \rvert + 1 \geq 7$ and so $f[x_i,x_j,x_l]$ is a term function (and, in fact, an isoterm) for any $x_i,x_j,x_l \in Y$. Therefore $\prec$ is a strict linear order on $\overline{Y}$.

By Lemma \ref{lemma_schemeprimitive}, it is sufficient to show that $f[Y]$ comes from a term. Let 
\[
x_{i_1} \prec x_{i_2} \prec \cdots \prec x_{i_\alpha}
\]
be the linear order $\prec$ on $\overline{Y}$ with the prefixes removed so that $i_1,i_2,\dots,i_\alpha$ may have repeats. We claim that $f[Y]$ is induced by the term
\[
    \mathbf{w} = x_{i_1}x_{i_2}\cdots x_{i_\alpha}.
\]
To do this, we will show that $M(A_\kappa)$ satisfies $\mathbf{w}^{(ij)} \approx \mathbf{t}_{ij}[Y]$.

Note, only $x_j$ can be strongly primitive in $\mathbf{w}^{(ij)}$ or $\mathbf{t}_{ij}[Y]$. If $x_j$ is strongly primitive in the former (resp.~latter), then $x_j$ is strongly primitive in the latter (resp.~former) by Lemma \ref{lemma_occurences} and the construction of $\mathbf{w}$. Similarly, if $x_j$ is not strongly primitive in $\mathbf{w}^{(ij)}$ then it is not strongly primitive in $\mathbf{t}_{ij}$ and vice-versa. Therefore ~$\text{Prim}_{st}(\mathbf{w}^{(ij)}) = \text{Prim}_{st}(\mathbf{t}_{ij})$.

Note, for all ~$x_k,x_l \in Y\setminus\{x_i,x_j\}$ we get
\begin{equation}\label{eqn_orderofvariables}
    \mathbf{t}_{ij}[x_k,x_l] = f[x_k,x_l] = \mathbf{w}_{ij}[x_k,x_l].
\end{equation}
Hence if $x_j$ is strongly primitive in $\mathbf{w}^{(ij)}$, and thus $\mathbf{t}_{ij}$, then $\mathbf{w}^{(ij)} \approx \mathbf{t}_{ij}$ by Proposition \ref{prop_mak}. 

Suppose $x_j$ is not strongly primitive in $\mathbf{w}^{(ij)}$. Equation (\ref{eqn_orderofvariables}) implies that ${}_px_k$ occurs before ${}_qx_l$ in $\mathbf{w}^{(ij)}$ if and only if ${}_px_k$ occurs before ${}_qx_l$ in $\mathbf{t}_{ij}$ for any $x_k,x_l \in Y\setminus\{x_i,x_j\}$. It follows that to show $\mathbf{w}^{(ij)} \approx \mathbf{t}_{ij}$, we need to show that ${}_px_j$ occurs before (resp.~after) ${}_qx_k$ in $\mathbf{w}^{(ij)}$ if and only if ${}_px_j$ occurs before (resp.~`after) ${}_qx_k$ in ~$\mathbf{t}_{ij}$. 

Assume ${}_px_j$ occurs before ${}_qx_k$ in $\mathbf{w}^{(ij)}$. Then there exists integers $r$ and $s$ such that $r + s = p$ and ${}_rx_i$ and ${}_sx_j$ occur before ${}_qx_k$ in $\mathbf{w}$. By the construction of $\mathbf{w}$, the operation $f[x_i,x_j,x_k]$ is induced by a unique term in which ${}_rx_i$ and ${}_sx_j$ occur before ${}_qx_k$. Moreover, since $x_j$ is not primitive in $\mathbf{w}^{(ij)}$ (and $\mathbf{t}_{ij}$), $f_{ij}[x_i,x_j,x_k]$ is induced by a unique term where ${}_{r+s}x_j = {}_px_j$ occurs before ${}_qx_k$. But $f_{ij}[x_i,x_j,x_k]$ is induced by $\mathbf{t}_{ij}[x_j,x_k]$ and thus ${}_px_j$ occurs before ${}_qx_k$ in $\mathbf{t}_{ij}$ as required. The converse is similar as is the case when ${}_px_j$ occurs after ${}_qx_k$ in $\mathbf{w}^{(ij)}$ or $\mathbf{t}_{ij}$.  

Therefore $M(A_\kappa) \models \mathbf{w}^{(ij)} \approx \mathbf{t}_{ij}[Y]$ by Proposition \ref{prop_mak} and the theorem is proved by Lemma \ref{lemma_schemeprimitive}.
\end{proof}

Mayr \cite{Mayr2013} proposed a more general way of establishing large classes of non-finitely related algebras: prove there exists an inherently non-finitely related semigroup. An algebra $\mathbf{A}$ is said to be \emph{inherently non-finitely related} if for any algebra $\mathbf{B}$ of the same type, $\mathbf{A} \in V(\mathbf{B})$ implies that $\mathbf{B}$ is non-finitely related. It is unknown whether there exist inherently non-finitely related algebras, however Theorem \ref{thm_makfinitelyrelated} allows us to conclude that the class of nilpotent monoids will not produce an example of one. 

\begin{corollary}\label{cor_inherentnilpotentmonoid}
There are no inherently non-finitely related nilpotent monoids.
\end{corollary}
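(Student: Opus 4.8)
The plan is to realise every finite nilpotent monoid as a homomorphic image of some $M(A_\kappa)$ with $A$ finite, and then to apply Theorem \ref{thm_makfinitelyrelated}. The key step is the claim: \emph{for every finite nilpotent monoid $\mathbf{N}$ there exist a finite set $A$ and a $\kappa \in \mathbb{N}$ such that $\mathbf{N}$ is a homomorphic image of $M(A_\kappa)$.} Granting this, the corollary is immediate: we have $\mathbf{N} \in V(M(A_\kappa))$, the monoid $M(A_\kappa)$ is finite and of the same type as $\mathbf{N}$, and it is finitely related by Theorem \ref{thm_makfinitelyrelated}; hence $M(A_\kappa)$ witnesses that $\mathbf{N}$ does not satisfy the definition of being inherently non-finitely related.

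To prove the claim, let $\mathbf{N}$ be a finite nilpotent monoid, so that $\mathbf{S} = \mathbf{N}\setminus\{1\}$ is a nilpotent semigroup; fix $d$ with $\mathbf{S} \models x_1\cdots x_d \approx y_1\cdots y_d$, so every product of $d$ elements of $\mathbf{S}$ is the zero element $0$. Take $A$ to be the underlying set of $\mathbf{S}$ and $\kappa = d-1$, and let $\varepsilon\colon A^* \to \mathbf{N}$ be the monoid homomorphism sending each one-letter word $a$ to the corresponding element of $\mathbf{S}$ and the empty word to $1$; it is onto because $\mathbf{S}$ generates $\mathbf{N}$ as a monoid. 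The crucial observation is that $\varepsilon$ maps the ideal $I(A_{d-1})$ into $\{0\}$: every word in $I(A_{d-1})$ has length at least $d$, since a word of length less than $d$ has each letter occurring at most $d-1$ times, so is $(d-1)$-limited, so a factor of itself, so not in $I(A_{d-1})$; and any word of length at least $d$ over $A$ is sent by $\varepsilon$ to a product of at least $d$ elements of $\mathbf{S}$, which is $0$. Therefore $I(A_{d-1}) \subseteq \varepsilon^{-1}(0)$, the Rees congruence defining $M(A_{d-1}) = A^*/I(A_{d-1})$ is contained in $\ker\varepsilon$, and $\varepsilon$ factors through a surjective homomorphism from $M(A_{d-1})$ onto $\mathbf{N}$, which proves the claim with $\kappa = d-1$.

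I do not expect a substantial obstacle here; the one point requiring care is the compatibility of the Rees-quotient multiplication of $M(A_{d-1})$ with that of $\mathbf{N}$, that is, the verification that $\varepsilon$ annihilates $I(A_{d-1})$. This goes through precisely because the letters of $A$ are sent into the nilpotent ideal $\mathbf{S}$ rather than into $\mathbf{N}$ at large, so that every sufficiently long word evaluates to a product of that many non-identity elements, which $d$-nilpotency forces to be $0$. Finally, the degenerate cases $|N| \leq 2$ cause no trouble: $\mathbf{N}$ is then commutative and so finitely related by \cite[Theorem 3.6]{Davey2011}, hence certainly not inherently non-finitely related; and when $|N| \geq 3$ we have $|A| = |N|-1 \geq 2$, so Theorem \ref{thm_makfinitelyrelated} applies to $M(A_{d-1})$ without fuss.
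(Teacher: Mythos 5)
Your proof is correct, but it reaches the key containment $\mathbf{N}\in V(\mathbf{B})$ for a finitely related $\mathbf{B}$ by a different route than the paper. The paper takes $\mathbf{B}=\mathbf{T}=\mathbf{S}\times M(A_{d-1})$ with $\lvert A\rvert=2$: the given monoid embeds into $\mathbf{T}$, and $V(\mathbf{T})=V(M(A_{d-1}))$ by an appeal to Jackson's description of $V(M(A_{d-1}))$ (\cite[Corollary 3.7]{jackson2000finitely}), so $\mathbf{T}$ is finitely related by Theorems \ref{thm_makfinitelyrelated} and \ref{thm_frvarietalproperty}. You instead take $\mathbf{B}=M(A_{d-1})$ itself with $A=N\setminus\{1\}$ and exhibit an explicit surjection $M(A_{d-1})\twoheadrightarrow\mathbf{N}$, which puts $\mathbf{N}$ in $H(M(A_{d-1}))\subseteq V(M(A_{d-1}))$. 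Your Rees-quotient verification is sound: $I(A_{d-1})$ consists exactly of the non-$(d-1)$-limited words, every such word has length at least $d$, and $d$-nilpotency sends every length-$\geq d$ product of elements of $S$ to the zero of $\mathbf{S}$ (which is indeed a zero, since any product of more than $d$ factors can be re-bracketed as a product of exactly $d$ factors); hence $\varepsilon$ is constant on $I(A_{d-1})$ and factors through the Rees congruence. What each approach buys: yours is self-contained (no citation beyond Theorem \ref{thm_makfinitelyrelated}, and it does not even need Theorem \ref{thm_frvarietalproperty}, since the witness $\mathbf{B}$ is literally one of the monoids that theorem covers), at the cost of a larger alphabet $\lvert A\rvert=\lvert N\rvert-1$; the paper's version keeps the witness built over a two-letter alphabet and additionally records that every nilpotent monoid embeds into a finitely related one, but leans on an external varietal identity. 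Your handling of the degenerate cases $\lvert N\rvert\leq 2$ via commutativity is fine (and strictly speaking optional, since an inherently non-finitely related algebra is itself non-finitely related, hence non-commutative).
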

\begin{proof}
Let $\mathbf{S}$ be a non-finitely related $d$-nilpotent monoid. We claim that $\mathbf{S}$ embeds into a finitely related nilpotent monoid. 

Since $\mathbf{S}$ is $d$-nilpotent, $\mathbf{S} \models \mathscr{A}_d$. Let $A$ be a set with $\lvert A \rvert = 2$ and consider the semigroup $\mathbf{T} := \mathbf{S} \times M(A_{d-1})$. Then $V(\mathbf{T}) = V(M(A_{d-1}))$ \cite[Corollary 3.7]{jackson2000finitely} so is finitely related by Theorem \ref{thm_makfinitelyrelated} and Theorem \ref{thm_frvarietalproperty}. This concludes the proof of the claim as $\mathbf{S}$ embeds into $\mathbf{T}$. Therefore there are no inherently non-finitely related nilpotent monoids.
\end{proof}

\section{Chain, Crown \& Maelstrom words}

This section is primarily dedicated to the construction of schemes using chain, crown, and maelstrom words. Chain words, to be defined precisely below, were initially introduced by Lee for constructing non-finitely based monoid varieties \cite{lee2015inherently,lee2014certain}, and were further studied by Jackson and Lee to analyse monoids with uncountably many subvarieties \cite{jackson2018monoid}. Ren, Jackson, Zhao, and Lei also utilized chain words in the identification of semiring limit varieties \cite{ren2022flat}. Although not explicitly stated, chain words were used by Steindl to give an example of a non-finitely related nilpotent monoid \cite{steindl2022}. For certain varieties, chain words are essentially fixed in an overlapping pattern until a variable is deleted, in which case the pattern is irrevocably damaged. In the case of schemes, we will show that, under certain circumstances, identifying variables to create a strongly primitive variable has the same effect. This will allow us to construct patterns that don't correspond to any term for some variety, but where identifications can be derived from a term.

We will extend the concept of variable patterns that break down upon removal to two novel types of words known as Crown and Maelstrom words. This extension enables us to demonstrate the presence of an abundance of non-finitely related nilpotent monoids, which we can utilize to prove, among other things, the existence of two non-finitely related algebras whose direct product is finitely related.

\subsection{Schemes built from chain words}
The base case of the following definition, where $p = q = 1$, is the most commonly invoked form, and are sometimes referred to as Lee words \cite{ren2022flat}.
 
\begin{definition}\label{defn_chainwords}
Given $n,p,q \in \mathbb{N}$, the \emph{chain word} in $n$ variables with variable exponent $p+q$ is given by
\[
    \mathcal{C}_{n,p,q} = x_1^px_2^px_1^qx_3^px_2^qx_4^px_3^q\cdots x_n^px_{n-1}^qx_n^q.
\]
\end{definition}

Let $i,j,n \in \mathbb{N}$. To simplify the exposition in this section, the following notation is used:

\begin{align*}
    \mathcal{C}_{p,q}(i;j) &:= \mathcal{C}_{j-i+1,p,q}(x_i,x_{i+1},\dots,x_j) &&\text{ if } i < j \leq n, \\
    \mathcal{C}_{p,q}(i;n;j) &:= \mathcal{C}_{n-i+j+1,p,q}(x_i,\dots,x_{n-1},x_n,x_1,\dots,x_j) &&\text{ if } j < i \leq n.\\
\end{align*}

We adopt the convention that chain words are over the usual variable set $X_n$, and denote the chain word over $Y_n = \{y_1,\dots,y_n\}$ by $\overline{\mathcal{C}_{n,p,q}}$.

Given any $i,j \in \underline{n}$ with $i < j$, let 
\[
\theta_{ij} =(x_j,x_{i+1},x_{i+2},\dots,x_{n-1},x_n,x_1,x_2,\dots,x_{i-1}).
\]

\begin{lemma}\label{lemma_chainwordcommute}
Let $\mathbf{S}$ be a monoid. Fix $p,q \in \mathbb{N}$ and suppose 
\[
\mathbf{S} \models \mathcal{C}_{n,p,q}\overline{\mathcal{C}_{m,n,q}} \approx \overline{\mathcal{C}_{m,n,q}}\mathcal{C}_{n,p,q}
\]for all $m,n \in \mathbb{N}$. Then for any $n \in \mathbb{N}$ with $\mathbf{t} = \mathcal{C}_{n,p,q}$, $\mathbf{S}$ satisfies
\begin{equation}\label{eqn_chainwordscommute}
       \mathbf{t}(\theta_{ij})[X_n\setminus\{x_j,x_k,x_l\}] \approx \mathbf{t}(\theta_{kl})[X_n\setminus\{x_i,x_j,x_l\}],
\end{equation}
for all $i,j,k,l \in \underline{n}$ with $i < j$ and $k < l$. 
\end{lemma}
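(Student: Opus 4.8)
\emph{Strategy.} The plan is to show that both sides of~(\ref{eqn_chainwordscommute}), \emph{as words}, decompose into the same family of shorter chain words sitting on pairwise disjoint sets of variables, arranged only in a different (cyclic) order; the hypothesis that disjoint chain words commute in $\mathbf{S}$ then closes the gap. The combinatorial engine is the following observation, which I would record first: for variables $v_1,\dots,v_N$ and a set $D$, deleting $D$ from $\mathcal{C}_{N,p,q}(v_1,\dots,v_N)$ yields the concatenation, read in order, of the chain words $\mathcal{C}_{p,q}$ on the maximal runs of $v_1,\dots,v_N$ that avoid $D$. This follows by induction on $\lvert D\rvert$: a chain word is literally a concatenation of syllables $v_r^p$ and $v_r^q$, so deleting a single variable $v_m$ turns $\mathcal{C}_{N,p,q}(v_1,\dots,v_N)$ into $\mathcal{C}(v_2,\dots,v_N)$ when $m=1$, into $\mathcal{C}(v_1,\dots,v_{N-1})$ when $m=N$, and into $\mathcal{C}(v_1,\dots,v_{m-1})\,\mathcal{C}(v_{m+1},\dots,v_N)$ otherwise, and a further deletion inside one factor of such a product affects only that factor.

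\emph{Applying this to the two words.} By definition $\mathbf{t}(\theta_{ij})$ is $\mathcal{C}_{n,p,q}$ evaluated along the list $x_j,x_{i+1},x_{i+2},\dots,x_n,x_1,\dots,x_{i-1}$; since $i<j\le n$ the letter $x_j$ occupies two positions of this list (the first, and position $j-i+1$), whereas $x_i$ occupies none. Deleting $\{x_j,x_k,x_l\}$ therefore removes exactly the syllables at the positions of $x_j,x_k,x_l$, and by the observation the result is the ordered concatenation of the chain words on the maximal runs of $x_{i+1},\dots,x_n,x_1,\dots,x_{i-1}$ avoiding $\{x_j,x_k,x_l\}$. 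Since that list is the cyclic word $x_1\cdots x_n$ cut open at $x_i$ --- and $x_i$ is itself deleted --- these runs are precisely the arcs of the cycle $x_1,\dots,x_n$ with the set $\{x_i,x_j,x_k,x_l\}$ removed, listed in cyclic order starting from the arc immediately following $x_i$. The identical analysis of $\mathbf{t}(\theta_{kl})[X_n\setminus\{x_i,x_j,x_l\}]$ (with $x_l$ now the doubled letter and $x_k$ the absent one) yields the concatenation of the chain words on the same arcs, listed in cyclic order starting from the arc immediately following $x_k$.

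\emph{Conclusion and degeneracies.} Hence the two sides of~(\ref{eqn_chainwordscommute}) are concatenations of the same chain words, on pairwise disjoint variable sets, differing only by a cyclic rotation of the factors; carrying a block of leading factors past the trailing ones is a composition of transpositions of pairs of disjoint chain words, each legitimate by the assumed commuting relation for disjoint chain words, so $\mathbf{S}$ satisfies~(\ref{eqn_chainwordscommute}). It remains to dispatch degeneracies: an empty arc contributes only the empty word and is harmless, and if $i,j,k,l$ are not all distinct (as is forced when $n<4$) then one of the three deleted letters is either absent from the word or redundant, and rerunning the run analysis in each coincidence sub-case shows the two sides are either literally equal or, again, a cyclic rotation of the same disjoint chain words.

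\emph{Main obstacle.} The step I expect to be the main obstacle is the bookkeeping around the doubled occurrence of $x_j$ in $\theta_{ij}$ combined with the complete absence of $x_i$: it is exactly this pairing ``$x_j$ twice, $x_i$ not at all'' that makes the surviving runs line up with the arcs of a four-times-punctured \emph{cycle} rather than of a punctured linear order, and pinning down the resulting cyclic order correctly --- particularly in the coincidence cases --- is where the argument is most error-prone.
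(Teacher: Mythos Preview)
Your proposal is correct and follows essentially the same approach as the paper: both sides of~(\ref{eqn_chainwordscommute}) are products of the same chain words on the arcs of the cycle $x_1,\dots,x_n$ punctured at $\{x_i,x_j,x_k,x_l\}$, differing only by a cyclic rotation, and the assumed commutation of disjoint chain words finishes. The paper is terser---it records the decomposition by direct calculation in a single representative case ($i<k<j<l$) and leaves the others to the reader---whereas you give the general ``maximal runs'' mechanism and handle the doubled $x_j$/absent $x_i$ bookkeeping explicitly, but the underlying idea is the same.
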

\begin{proof}
Let $\mathbf{u},\mathbf{v}$ be the words on the left and right hand side of (\ref{eqn_chainwordscommute}) respectively. It is easy to see that $c(\mathbf{u}) = c(\mathbf{v})$. Notice that assigning $1$ to any variable in a chain breaks the word into smaller chain words. It follows from the fact that $\mathbf{S} \models \mathcal{C}_{n,p,q}\overline{\mathcal{C}_{m,n,q}} \approx \overline{\mathcal{C}_{m,n,q}}\mathcal{C}_{n,p,q}$ that it is enough to show that $\mathbf{u}$ and $\mathbf{v}$ are products of the same chain words. 

We give the case for when $i < k < j < l$, but it is easy to verify, using the same calculation, that all cases produce the same chain words in $\mathbf{u}$ and $\mathbf{v}$. 

Given that $i < k < j < l$, direct calculation yields
\begin{align*}
    \mathbf{u} &= \mathcal{C}_{p,q}(i+1;k-1)\mathcal{C}_{p,q}(k+1;j-1)\mathcal{C}_{p,q}(j+1;l-1)\mathcal{C}_{p,q}(l+1;n;i-1)\\
    &\approx \mathcal{C}_{p,q}(k+1;j-1)\mathcal{C}_{p,q}(j+1;l-1)\mathcal{C}_{p,q}(l+1;n;i-1)\mathcal{C}_{p,q}(i+1;k-1) = \mathbf{v}.
\end{align*}

\end{proof}

Our main goal is to show that monoids satisfying certain conditions are necessarily non-finitely related. Lemma \ref{lemma_schemeprimitive} and Lemma \ref{lemma_chainwordcommute} hint towards the scheme we are trying to construct. The following lemma will essentially guarantee that these schemes do not come from terms. 

\begin{figure}[H]
    \centering
\begin{tikzpicture}
\begin{scope}[xscale=-1]
\def \n {5}
\def \radius {3cm}
\def \margin {8}

\foreach \s in {1,...,3}
{
  \node[circle] at ({360/\n * (\s - 1)}:\radius) {$x_\s$};
  \draw[->, >=latex] ({360/\n * (\s - 1)+\margin}:\radius) 
    arc ({360/\n * (\s - 1)+\margin}:{360/\n * (\s)-\margin}:\radius);
}
\node[circle] at ({360/\n * (4 - 1)}:\radius) {\rotatebox{55}{$\dots$}};
\draw[->, >=latex] ({360/\n * (4 - 1)+\margin}:\radius) 
    arc ({360/\n * (4 - 1)+\margin}:{360/\n * (4)-\margin}:\radius);
\node[circle] at ({360/\n * (5 - 1)}:\radius) {$x_{n}$};
\draw[->, >=latex] ({360/\n * (5 - 1)+\margin}:\radius) 
    arc ({360/\n * (5 - 1)+\margin}:{360/\n * (5)-\margin}:\radius);
\end{scope}
\end{tikzpicture}
    \caption{A directed $n$-cycle is an impossible pattern for chain words.}
    \label{fig_chainword}
\end{figure}

The scheme we produce will be derived from the directed $n$-cycle in Figure \ref{fig_chainword}. Each vertex represents a variable, and each edge a pattern of occurrence of variables in a desired term. In particular, we require that there is an edge $x \to y$ if and only if $x^py^px^qy^q$ occurs in the term we are trying to construct. The following lemma shows that this is impossible for the graph in Figure \ref{fig_chainword} if $\mathcal{C}_{2,p,q} = x_1^px_2^px_1^qx_2^q$ is an isoterm.

\begin{lemma}\label{lemma_chainnotterm}
Let $n \geq 4$ and $G = (V,E)$ be the directed graph in Figure \ref{fig_chainword}. Fix $p,q \in \mathbb{N}$. Then there is no $n$-ary word $\mathbf{t}$ such that if $(x,y) \in E(G)$ then $\mathbf{t}[x,y] = x^py^px^qy^q$.
\end{lemma}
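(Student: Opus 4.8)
The plan is to argue by contradiction and extract from the cyclic structure of $G$ an impossible cyclic chain of strict inequalities between positions in $\mathbf{t}$. Since $G$ is a directed $n$-cycle, its edge set is $(x_i,x_{i+1})$ for $i\in\underline{n}$, with indices read modulo $n$.

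First I would unpack the hypothesis for a single edge. Suppose such a word $\mathbf{t}$ exists. Deleting variables from a word changes neither the number of occurrences of the retained variables nor their relative order, so from $\mathbf{t}[x_i,x_{i+1}]=x_i^px_{i+1}^px_i^qx_{i+1}^q$ one reads off two facts: each $x_i$ occurs exactly $p+q\ge 1$ times in $\mathbf{t}$, so in particular $x_i\in c(\mathbf{t})$; and the leftmost occurrence in $\mathbf{t}$ of a variable from $\{x_i,x_{i+1}\}$ is an occurrence of $x_i$ (because the first letter of the projection $x_i^px_{i+1}^px_i^qx_{i+1}^q$ is $x_i$, using $p\ge 1$). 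Writing $\mathrm{first}(x_i)$ for the position of the leftmost occurrence of $x_i$ in $\mathbf{t}$, the second fact says precisely $\mathrm{first}(x_i)<\mathrm{first}(x_{i+1})$.

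Running this around the cycle for every $i\in\underline{n}$ gives
\[
\mathrm{first}(x_1)<\mathrm{first}(x_2)<\cdots<\mathrm{first}(x_n)<\mathrm{first}(x_1),
\]
which is absurd; hence no such $\mathbf{t}$ exists.

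There is no real obstacle here. The only points needing a word of care are that the deletion operation $\mathbf{t}[\,\cdot\,]$ preserves occurrence counts and relative order of the kept variables — so reading ``$x_i$ precedes $x_{i+1}$'' off the projection is legitimate — and that $p,q\ge 1$ ensures $\mathrm{first}(x_i)$ is defined. The argument is insensitive to the precise values of $p$ and $q$ and in fact works for all $n\ge 2$; the hypothesis $n\ge 4$ is only what the later constructions require. (Equivalently one could track the position of the $p$-th occurrence of each $x_i$, using that the entire first block $x_i^p$ precedes the entire first block $x_{i+1}^p$, but the first-occurrence bookkeeping is the most economical.)
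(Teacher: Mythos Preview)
Your proof is correct and uses essentially the same idea as the paper's. The paper's version is marginally more direct: rather than chaining $n$ strict inequalities on first positions, it simply lets $x_j$ be the first letter of $\mathbf{t}$ and observes that the edge $(x_{j\ominus 1},x_j)$ forces $\mathbf{t}[x_{j\ominus 1},x_j]$ to begin with $x_{j\ominus 1}$, an immediate contradiction; but this is just your argument truncated to the single relevant step.
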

\begin{proof}
For any $x_i$ we have  $(x_{i\ominus1},x_i) \in E$ where $\ominus$ is subtraction modulo $n$. Suppose $\mathbf{t}$ starts with some variable $x_j$. Then $\mathbf{t}[x_{j\ominus1},x_j] = x_{j\ominus1}^px_j^px_{j\ominus1}^qx_j^q$ contradicting the fact that $\mathbf{t}$ begins with $x_j$. 
\end{proof}

\begin{theorem}\label{thm_chainwordsnfr}
Let $\mathbf{S}$ be a finite monoid. Suppose there exists $p,q \in \mathbb{N}$ such that $\mathbf{S}$ satisfies the following:
\begin{enumerate}[label=(\roman*)]
    \item for any $m,n\in \mathbb{N}$, $\mathbf{S} \models \mathcal{C}_{n,p,q}\overline{\mathcal{C}_{m,p,q}} \approx \overline{\mathcal{C}_{m,p,q}}\mathcal{C}_{n,p,q}$;
    \item $\mathbf{S} \models \mathscr{A}_{\alpha,\beta}$ where $\alpha,\beta \in \mathbb{N}$ and $\alpha \leq 2p+2q$;
    \item $\mathcal{C}_{2,p,q}$ is an isoterm for $\mathbf{S}$.
\end{enumerate}
Then $\mathbf{S}$ is non-finitely related.
\end{theorem}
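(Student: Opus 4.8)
The plan is to apply Theorem \ref{thm_frconditions}: it suffices to produce, for arbitrarily large $n$, an $n$-ary scheme for $V(\mathbf{S})$ that does not come from a term. Two consequences of hypothesis (iii) should be recorded first. Since $\mathcal{C}_{2,p,q}=x_1^px_2^px_1^qx_2^q$ is an isoterm and $q\ge 1$, $\mathbf{S}$ is not a group (an abelian group would satisfy $\mathcal{C}_{2,p,q}\approx x_1^{p+q}x_2^{p+q}$), so $\mathbf{S}$ has a non-identity idempotent and $V(\mathbf{S})$ contains the variety of semilattices; and $\alpha>p+q$, since otherwise both letters of $\mathcal{C}_{2,p,q}$ would occur at least $\alpha$ times and $\mathscr{A}_{\alpha,\beta}$ would rewrite it. Together with (ii) this means that a letter occupying one ``slot'' of a chain word $\mathcal{C}_{n,p,q}$ is not strongly primitive, while a letter occupying two slots (hence occurring $2(p+q)\ge\alpha$ times) is.

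For each $n\ge\max(\lvert S\rvert+2,4)$ I would take $\mathscr{F}=\{\mathbf{t}_{ij}\mid 1\le i<j\le n\}$ with $\mathbf{t}_{ij}=\mathcal{C}_{n,p,q}(\theta_{ij})$, the chain word whose letters, read left to right, are $x_j,x_{i+1},\dots,x_n,x_1,\dots,x_{i-1}$. Then $c(\mathbf{t}_{ij})=X_n\setminus\{x_i\}$, so dependency holds, and the unique strongly primitive letter of $\mathbf{t}_{ij}$ is $x_j$, which sits both at the head and in its natural position. The consistency identities $\mathbf{S}\models\mathbf{t}_{ij}^{(kl)}\approx\mathbf{t}_{kl}^{(ij)}$ for distinct $i,j,k,l$ I would verify by peeling off strongly primitive letters using $\mathscr{A}_{\alpha,\beta}$: each side becomes a product of a factor $x_j^{\,a}x_l^{\,b}$, with $a,b$ determined by the occurrence counts of $x_j$ and $x_l$ (which agree on the two sides), and a ``core'', the cores being $\mathbf{t}_{ij}[X_n\setminus\{x_j,x_k,x_l\}]$ and $\mathbf{t}_{kl}[X_n\setminus\{x_i,x_j,x_l\}]$, which are $\mathbf{S}$-equal by hypothesis (i) and Lemma \ref{lemma_chainwordcommute}; here one uses that deleting a letter from $\mathcal{C}_{n,p,q}(\theta_{ij})$ splits it into sub-chain-words. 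The configurations in which some of $i,j,k,l$ coincide would be dispatched separately. Finally $\mathscr{F}$ depends on all its variables, since for each $k$ the minor $f[x_{k-1},x_k]$ is induced by $\mathcal{C}_{2,p,q}(x_{k-1},x_k)$ and $b\mapsto b^{p+q}$ is non-constant on the two-element semilattice inside $\mathbf{S}$.

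Next I would compute the binary minors: choosing $i,j\notin\{m,m+1\}$ (possible since $n\ge 4$), neither $x_m$ nor $x_{m+1}$ is the head of $\mathbf{t}_{ij}$ and they occupy consecutive slots, so $\mathbf{t}_{ij}[x_m,x_{m+1}]=\mathcal{C}_{2,p,q}(x_m,x_{m+1})=x_m^px_{m+1}^px_m^qx_{m+1}^q$, whence $f[x_m,x_{m+1}]=(x_m^px_{m+1}^px_m^qx_{m+1}^q)^{\mathbf{S}}$ for every $m$ modulo $n$. If $\mathscr{F}$ came from a term $\mathbf{s}$, then for such $i,j$, deleting all letters except $x_m,x_{m+1}$ in the identity $\mathbf{s}^{(ij)}\approx\mathbf{t}_{ij}$ yields $V(\mathbf{S})\models\mathbf{s}[x_m,x_{m+1}]\approx\mathcal{C}_{2,p,q}(x_m,x_{m+1})$; by (iii) this is an equality of words, so $\mathbf{s}[x_m,x_{m+1}]=x_m^px_{m+1}^px_m^qx_{m+1}^q$ for every edge $(x_m,x_{m+1})$ of the directed $n$-cycle of Figure \ref{fig_chainword} — impossible by Lemma \ref{lemma_chainnotterm}. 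Hence no such $\mathscr{F}$ comes from a term, and $\mathbf{S}$ is non-finitely related by Theorem \ref{thm_frconditions}.

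The crux is the consistency verification: making the $\mathscr{A}_{\alpha,\beta}$ bookkeeping precise — that the normalisation peels $x_j$ (and, after identification, $x_l$) off both sides with equal exponents and leaves exactly the cores to which Lemma \ref{lemma_chainwordcommute} applies — and checking the degenerate index cases. Everything after that, namely the minor computation and the appeal to Lemma \ref{lemma_chainnotterm} together with the isoterm hypothesis (iii), is short.
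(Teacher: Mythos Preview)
Your proposal is correct and follows essentially the same approach as the paper: you build the scheme $\mathscr{F}=\{\mathcal{C}_{n,p,q}(\theta_{ij})\}$, verify consistency by pulling off the strongly primitive letters $x_j,x_l$ via $\mathscr{A}_{\alpha,\beta}$ and invoking Lemma~\ref{lemma_chainwordcommute} on the remaining cores, and rule out a generating term via the isoterm hypothesis~(iii) and Lemma~\ref{lemma_chainnotterm}. Your preliminary remarks (that (iii) forces $V(\mathbf{S})$ to contain the semilattices and that $\alpha>p+q$, so single-slot letters are not strongly primitive) and your check that $\mathscr{F}$ depends on all variables are helpful additions the paper leaves implicit.
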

\begin{proof}
Let $n > 4$. For any $1 \leq i < j \leq n$, let 
\[
    \mathbf{t}_{ij} = \mathcal{C}_{n,p,q}(\theta_{ij}).
\]
Let $\mathscr{F} = \{\mathbf{t}_{ij} \mid 1 \leq i < j \leq n\}$. We first show that $\mathscr{F}$ is a scheme for $V(\mathbf{S})$. Clearly ~$\mathscr{F}$ satisfies the dependency condition since $c(\mathbf{t}_{ij}) = X_n\setminus\{x_i\}$. To show consistency, take any $i,j,k,l \in \underline{n}$ with $i < j$ and $k < l$. Then 
\begin{equation}\label{eqn_thmabab}
    \mathbf{t}_{ij}^{(kl)}[X_n \setminus \{x_j,x_l\}] \approx \mathbf{t}_{kl}^{(ij)}[X_n \setminus \{x_j,x_l\}]
\end{equation}
by Lemma \ref{lemma_chainwordcommute} and (i). Now, (ii) ensures that $x_j,x_l$ are the strongly primitive letters in $\mathbf{t}_{ij}^{(kl)}$ and $\mathbf{t}_{kl}^{(ij)}$ with $\text{occ}(x_j,\mathbf{t}_{ij}^{(kl)}) = 2p+2q = \text{occ}(x_j,\mathbf{t}_{kl}^{(ij)})$ and $\text{occ}(x_l,\mathbf{t}_{ij}^{(kl)}) = 2p+2q = \text{occ}(x_l,\mathbf{t}_{kl}^{(ij)})$ whence $\mathbf{S} \models \mathbf{t}_{ij}^{(kl)} \approx \mathbf{t}_{kl}^{(ij)}$ by (\ref{eqn_thmabab}). Therefore ~$\mathscr{F}$ is a scheme for $V(\mathbf{S})$.

Let $G$ be the graph in Figure \ref{fig_chainword}. By the construction of the terms in $\mathscr{F}$ and (iii), if $\mathscr{F}$ came from a term $\mathbf{t}$ then we would have $\mathbf{t}[x,y] = x^py^px^qy^q$ whenever $(x,y) \in E(G)$. But by Lemma \ref{lemma_chainnotterm} no such term can exist. Therefore $\mathscr{F}$ does not come from a term and hence $\mathbf{S}$ is non-finitely related.
\end{proof}

An \emph{image} of a word ${\bf w}\in A^*$ is a word of the form $\theta({\bf w})$, where $\theta$ is an endomorphism of $A^*$.

It is well known that two words $\mathbf{u}$ and $\mathbf{v}$ in the free monoid $A^*$ commute if and only if there exists a word $\mathbf{w}$ such that $\mathbf{u}$ and $\mathbf{v}$ are powers of $\mathbf{w}$ (see \cite[Lemma ~2.2]{sapir2000finitely} for instance).

\begin{corollary}\label{cor_corchain}
    Let $\mathbf{w}$ be a word in $\{a,b\}^+$. Suppose the following conditions hold:
    \begin{enumerate}[label=(\arabic*)]
    \item there exist subwords $\mathbf{u},\mathbf{v}$ of $\mathbf{w}$ such that $\mathbf{u}$ and $\mathbf{v}$ do not commute in $\{a,b\}^*$ and $\mathbf{u}^p\mathbf{v}^p\mathbf{u}^q\mathbf{v}^q$ is a subword of $\mathbf{w}$;
    \item the image of two chain words are not adjacent in ${\bf w}$ unless the images are powers of the same word;
    \item $M(\mathbf{w})$ satisfies $\mathscr{A}_\alpha$ for $\alpha \leq 2p+2q$.
    \end{enumerate}

    Then $M(\mathbf{w})$ is non-finitely related.
\end{corollary}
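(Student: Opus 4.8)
The plan is to verify the three hypotheses of Theorem~\ref{thm_chainwordsnfr} for the semigroup $\mathbf{S} = M(\mathbf{w})$, so that non-finite relatedness follows immediately. Condition (iii) of the theorem is exactly hypothesis (3) of the corollary, so nothing is needed there. The real work is checking conditions (i) and (ii) of the theorem from hypotheses (1) and (2).

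For condition (i) of the theorem — that $M(\mathbf{w}) \models \mathcal{C}_{n,p,q}\overline{\mathcal{C}_{m,p,q}} \approx \overline{\mathcal{C}_{m,p,q}}\mathcal{C}_{n,p,q}$ for all $m,n$ — I would argue using Proposition~\ref{prop_mak}-style reasoning adapted to $M(\mathbf{w})$: two words are equal in $M(\mathbf{w})$ precisely when neither side embeds as a factor of $\mathbf{w}$ (so both evaluate to $0$ under every substitution) or they are literally the same word. Since $\mathcal{C}_{n,p,q}$ and $\overline{\mathcal{C}_{m,p,q}}$ are over disjoint alphabets, the word $\mathcal{C}_{n,p,q}\overline{\mathcal{C}_{m,p,q}}$ and its reverse-ordered counterpart are distinct words in the free monoid, so the identity can only hold in $M(\mathbf{w})$ if both products are $0$ there, i.e. neither is a factor of any image substituted into $\mathbf{w}$. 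This is where hypothesis (2) enters: if some substitution gave a nonzero value to $\mathcal{C}_{n,p,q}\overline{\mathcal{C}_{m,p,q}}$, then $\mathbf{w}$ would contain an image of $\mathcal{C}_{n,p,q}$ immediately followed by an image of $\overline{\mathcal{C}_{m,p,q}}$ — that is, two adjacent images of chain words — forcing (by hypothesis (2)) those two images to be powers of a common word $\mathbf{z}$. But then the images of the two generating variables of each chain word would all be powers of $\mathbf{z}$, hence would commute; tracking the alphabets, this collapses the chain word structure, and one checks such a substitution either reduces to the degenerate case or contradicts the chain word $\mathcal{C}_{2,p,q}$ being an isoterm (which follows from hypothesis (1), since $\mathbf{u},\mathbf{v}$ non-commuting means $\mathbf{u}^p\mathbf{v}^p\mathbf{u}^q\mathbf{v}^q$ is already witnessed faithfully). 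The same argument applies symmetrically to the reversed product, so both products are $0$ and the identity holds.

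For condition (ii) — that $M(\mathbf{w}) \models \mathscr{A}_{\alpha,\beta}$ with $\alpha \le 2p+2q$ — hypothesis (3) gives $\mathscr{A}_\alpha = \mathscr{A}_{\alpha,1}$ with $\alpha \le 2p+2q$ directly, taking $\beta = 1$ (which is automatic for nilpotent monoids, as noted after Lemma~\ref{lemma_occurences}). For condition (iii), the chain word $\mathcal{C}_{2,p,q} = a^pb^pa^qb^q$ must be an isoterm for $M(\mathbf{w})$; this is where hypothesis (1) is used — because $\mathbf{u}^p\mathbf{v}^p\mathbf{u}^q\mathbf{v}^q$ is a subword of $\mathbf{w}$ with $\mathbf{u},\mathbf{v}$ non-commuting, the substitution $a\mapsto\mathbf{u}$, $b\mapsto\mathbf{v}$ shows $\mathcal{C}_{2,p,q}$ evaluates nonzero in $M(\mathbf{w})$, and by Proposition~\ref{prop_mak}-type analysis any word equal to it in $M(\mathbf{w})$ with the same (strongly primitive) content must be the word itself, given that two non-commuting words cannot be non-trivially rearranged.

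I expect the main obstacle to be condition (i): pinning down exactly how hypothesis (2) ("images of two chain words are not adjacent in $\mathbf{w}$ unless they are powers of the same word") rules out a nonzero evaluation of $\mathcal{C}_{n,p,q}\overline{\mathcal{C}_{m,p,q}}$ in $M(\mathbf{w})$. One has to be careful that an arbitrary substitution $\theta$ sending the two alphabets into $\{a,b\}^*$ produces genuine \emph{images of chain words} whose adjacency in $\mathbf{w}$ triggers hypothesis (2), and then argue that the "powers of a common word" conclusion forces the substituted variables to commute — contradicting that $\mathbf{u}, \mathbf{v}$ witness non-commutativity, or else degenerating so that the product is not actually a faithful image of the chain word. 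Handling the degenerate substitutions (where some variable maps to the empty word or a power of another) cleanly, without an explosion of cases, will require invoking the commuting-words lemma ("$\mathbf{u}$ and $\mathbf{v}$ commute iff both are powers of a common word") and the structure of chain words under letter deletion (each deletion splits a chain into shorter chains) that was already exploited in the proof of Lemma~\ref{lemma_chainwordcommute}.
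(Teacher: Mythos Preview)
Your overall plan --- verify conditions (i)--(iii) of Theorem~\ref{thm_chainwordsnfr} for $M(\mathbf{w})$ --- is exactly the paper's. Note a slip in your opening sentence: you say ``condition (iii) of the theorem is exactly hypothesis (3) of the corollary'', but in fact hypothesis (3) yields condition (ii) (the $\mathscr{A}_{\alpha,1}$ law with $\alpha\le 2p+2q$) while hypothesis (1) yields condition (iii) (that $\mathcal{C}_{2,p,q}$ is an isoterm). You correct this yourself two paragraphs later, and your argument for (iii) via the freeness of the submonoid generated by non-commuting $\mathbf{u},\mathbf{v}$ is fine.

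The genuine gap is in your treatment of condition (i). You assert that the identity $\mathcal{C}_{n,p,q}\overline{\mathcal{C}_{m,p,q}} \approx \overline{\mathcal{C}_{m,p,q}}\mathcal{C}_{n,p,q}$ ``can only hold in $M(\mathbf{w})$ if both products are $0$'', and then spend the paragraph trying to wring a contradiction out of the ``powers of a common word'' case. That framing is wrong, and the contradiction does not exist. Write $A = \theta(\mathcal{C}_{n,p,q})$ and $B = \theta(\overline{\mathcal{C}_{m,p,q}})$ for a substitution $\theta$. If $A \neq 1 \neq B$ and $AB$ is a nonzero subword of $\mathbf{w}$, then hypothesis (2) does force $A$ and $B$ to be powers of a common word --- but the correct conclusion is simply that $AB = BA$ already as elements of the free monoid, so both sides of the identity agree at $\theta$. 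There is nothing to contradict, and your further claim that the individual variable images must all be powers of $\mathbf{z}$ is neither true nor needed. The paper's argument is just this one-line observation: for any substitution, either one factor is $1$, or the product is $0$, or hypothesis (2) makes the two factors commute; in every case the two sides coincide.
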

\begin{proof}
    The monoid $M(\mathbf{w})$ satisfies conditions (ii) and (iii) in Theorem \ref{thm_chainwordsnfr} by (1) and (3). To show (i) of Theorem \ref{thm_chainwordsnfr} let $m,n \in \mathbf{N}$ and consider $\mathcal{C}_{n,p,q}$ and $\overline{\mathcal{C}_{m,p,q}}$ in variables $x_1,\dots,x_n$ and $y_1,\dots,y_m$ respectively. We may interpret $\mathcal{C}_{n,p,q}$ and ~$\overline{\mathcal{C}_{m,p,q}}$ as $(m+n)$-ary words in the obvious way. Then for any $x \in M(\mathbf{w})^{m+n}$, condition ~(2) implies $(\mathcal{C}_{n,q,p}\overline{\mathcal{C}_{m,p,q}})(x) = 0$ unless $\mathcal{C}_{n,q,p}(x) = 1$ or $\overline{\mathcal{C}_{m,p,q}}(x) = 1$. Thus $M(\mathbf{w})$ satisfies (i) of Theorem \ref{thm_chainwordsnfr}.
\end{proof}

Although Corollary \ref{cor_corchain} offers an abundance of examples of non-finitely related nilpotent monoids, it is not comprehensive in terms of words which meet the requirements of Theorem \ref{thm_chainwordsnfr}. In our example of two non-finitely related semigroups whose direct product is finitely related, we will use the fact that $M(a^pb^pa^qb^q)$ is non-finitely related (as a consequence of Corollary \ref{cor_corchain}). The smallest monoid of this form, when $p = q = 1$, is in fact the smallest non-finitely related nilpotent monoid built from a set of words. 

\begin{eg}
The monoid $M(abab)$ is the smallest example of a non-finitely related nilpotent monoid built from a set of words.
\end{eg}
\begin{proof}
    The 9 element monoid $M(abab)$ is non-finitely related by Corollary \ref{cor_corchain}. The only words $\mathbf{w}$ such that $\lvert M(\mathbf{w}) \rvert < 10$ are the words which have length $3$ or less or the words $aaaa$ and $abab$ (up to letter change) \cite[Theorem 4.3]{jackson2000finitely}. If $\mathbf{w}$ has length $3$ or less then $M(\mathbf{w})$ is $d$-nilpotent for $d \leq 4$. If $d = 3$ or $d = 4$ then $M(\mathbf{w})$ is finitely related by \cite[Theorem 5.1]{Mayr2013} and \cite[Theorem 2.3]{steindl2022} respectively. If $d = 2$ or if $\mathbf{w} = aaaa$ then $M(\mathbf{w})$ is a commutative monoid so is finitely related by \cite[Theorem 3.6]{Davey2011}. Therefore $M(abab)$ is the smallest non-finitely related nilpotent monoid built from words.
\end{proof}

While our main focus in this paper is nilpotent monoids built from words, Theorem \ref{thm_chainwordsnfr} also applies to semigroups that are not nilpotent monoids.
An \emph{ideal extension} of a semigroup $\mathbf{S}$ by a semigroup with zero element $\mathbf{T}$ is a semigroup $\mathbf{U}$ such that $\mathbf{S}$ is an ideal of $\mathbf{U}$ and the Rees quotient $\mathbf{U}/\mathbf{S}$ is isomorphic to $\mathbf{T}$. If $\mathbf{T}$ is a nilpotent semigroup then $\mathbf{U}$ is called an \emph{nilpotent-extension} of $\mathbf{S}$ by $\mathbf{T}$. 

Given a set of words $W$, we obtain $S(W)$ from $M(W)$ by removing the empty word, that is the identity, from $M(W)$.  

\begin{eg}
Let $p,q \in \mathbb{N}$. Let $\mathbf{T} = S(a^pb^pa^qb^q)$ and $\mathbf{S}$ be a commutative monoid with index $m \in \mathbb{N}$ such that $m \leq 2p+2q - (\text{max}(p,q)+1)$. If $\mathbf{N}$ is a nilpotent-extension of $\mathbf{S}$ by $\mathbf{T}$ then $\mathbf{N}^1$ is non-finitely related. 
\end{eg}
\begin{proof}
Since $\mathbf{w} = a^pb^pa^qb^q$ satisfies condition (2) of Corollary \ref{cor_corchain}, it follows that chain words commute in $\mathbf{T}^1$ as in the proof of Corollary \ref{cor_corchain}. Together with the fact that $\mathbf{S}$ is commutative and $ns = sn$ for any $n \in N^1$ and $s \in S$, it follows that $\mathbf{N}^1$ satisfies condition (i) of Theorem \ref{thm_chainwordsnfr}. Condition (ii) of Theorem \ref{thm_chainwordsnfr} holds by the fact that $\mathbf{T}^1 \models \mathscr{A}_{\text{max}(p,q)+1,r}$, where $r$ is the period of $\mathbf{S}$, and $\mathbf{S}$ has an index of no more than $2p+2q - (\text{max}(p,q)-1)$. Finally, condition (iii) holds by the fact that $x^py^px^qy^q$ is an isoterm for $\mathbf{T}^1$ and so is an isoterm for $\mathbf{N}^1$. 
\end{proof}

Note, in the above example we may remove the condition that $\mathbf{S}$ has index $m \leq 2p+2q$ and still use the terms $\mathbf{t}_{ij}$ in Theorem \ref{thm_chainwordsnfr}. In this case $\mathbf{N}^1$ does not satisfy the requirements of Theorem \ref{thm_chainwordsnfr}, but the reader can verify that the family of terms $\mathscr{F} = \{\mathbf{t}_{ij} \mid 1 \leq i < j \leq n\}$ is indeed a scheme for $V(\mathbf{N}^1)$ and that it does not come from a term.  

The following corollary mimics a result in the finite basis world \cite{jackson2000finitely}. 

\begin{corollary}\label{cor_alternatingfrnfr}
Let $A = \{a,b\}$ and $A_\kappa$ be the words over $A^*$ with at most $\kappa$-occurrences of any letter. Set $B_k = A_\kappa \cup \{a^\kappa b^\kappa ab\}$. Then 
    \[
        V(M(A_2)) \subset V(M(B_2)) \subset V(M(A_3)) 
        \subset V(M(B_3)) \subset \cdots
    \]
is an ascending chain of varieties alternating between finitely related and non-finitely related.
\end{corollary}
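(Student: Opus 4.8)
The plan is to analyze the chain
\[
V(M(A_2)) \subset V(M(B_2)) \subset V(M(A_3)) \subset V(M(B_3)) \subset \cdots
\]
in three parts: establishing the strict inclusions, establishing that the $M(A_\kappa)$ are finitely related, and establishing that the $M(B_\kappa)$ are non-finitely related. The second part is immediate from Theorem \ref{thm_makfinitelyrelated}, which gives that $M(A_\kappa)$ is finitely related for any finite $A$ and any $\kappa$. For the third part, I would apply Corollary \ref{cor_corchain} to the word $\mathbf{w} = a^\kappa b^\kappa ab$ with $p = \kappa$ and $q = 1$ (so that $\mathbf{w} = a^p b^p a^q b^q$): condition (1) holds with $\mathbf{u} = a$, $\mathbf{v} = b$, since $a$ and $b$ do not commute in $\{a,b\}^*$ and $a^p b^p a^q b^q = \mathbf{w}$ is (trivially) a subword of itself; condition (3) holds because $M(\mathbf{w})$ is $(2\kappa + 2)$-nilpotent — the longest words in $A_\kappa$ have length $2\kappa$ and $\mathbf{w}$ has length $2\kappa+2$ — so $M(\mathbf{w}) \models \mathscr{A}_\alpha$ for $\alpha = 2\kappa+2 = 2p+2q$, which is exactly the bound required. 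Condition (2), that images of two chain words are not adjacent in $\mathbf{w}$ unless they are powers of the same word, needs a short combinatorial check, and together with Theorem \ref{thm_frvarietalproperty} this yields that $M(B_\kappa)$ is non-finitely related (note that $B_\kappa$ contains $A_\kappa$, so the content requirement on $\mathbf{w}$ being over $\{a,b\}^+$ is unaffected; one works with the single word $a^\kappa b^\kappa ab$ since $M(B_\kappa)$ and $M(\{a^\kappa b^\kappa ab\} \cup A_\kappa)$ coincide).

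For the strict inclusions I would argue on two fronts using the varietal membership of these word monoids. The inclusion $V(M(A_\kappa)) \subseteq V(M(B_\kappa))$ holds because every word in $A_\kappa$ is a subword of a word in $B_\kappa$, hence $M(A_\kappa)$ is a homomorphic image of a subsemigroup of $M(B_\kappa)$ (indeed it is a Rees quotient). Strictness follows by exhibiting an identity satisfied by $M(A_\kappa)$ but not by $M(B_\kappa)$: by Proposition \ref{prop_mak} (or its evident analogue for $M(W)$), the chain-type word $x^\kappa y^\kappa xy$ is an isoterm for $M(B_\kappa)$ but in $M(A_\kappa)$ the letter $x$, occurring $\kappa+1$ times, is strongly primitive so $x^\kappa y^\kappa xy \approx x^{\kappa+1} y^{\kappa+1}$ fails in $M(B_\kappa)$ while holding in $M(A_\kappa)$. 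For $V(M(B_\kappa)) \subseteq V(M(A_{\kappa+1}))$, every word in $B_\kappa$ — including $a^\kappa b^\kappa ab$, in which every letter occurs at most $\kappa+1$ times — is a subword of a word in $A_{\kappa+1}$, giving the inclusion; strictness follows since $a^{\kappa+1}b^{\kappa+1}$ is a word (and isoterm) for $M(A_{\kappa+1})$ that is not realized in $M(B_\kappa)$, so an isoterm distinguishing the two varieties can be extracted in the same style.

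The main obstacle is verifying condition (2) of Corollary \ref{cor_corchain} for $\mathbf{w} = a^\kappa b^\kappa ab$: one must check that whenever $\theta_1(\mathbf{c})$ and $\theta_2(\mathbf{c}')$ are images of chain words that sit adjacent as a subword of $\mathbf{w}$, they are powers of a common word. Since $\mathbf{w}$ lives over just two letters and the chain word $\mathcal{C}_{n,p,q}$ with $n \geq 2$ already contains the alternation $x_1^p x_2^p x_1^q x_2^q$, any non-trivial image must use both $a$ and $b$ with a prescribed alternating shape, and $\mathbf{w}$ has almost no room for two such images side by side except when both images degenerate to powers of $a$ alone or of $b$ alone (which are powers of a common word). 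This is a finite case analysis on the possible factorizations of the short word $a^\kappa b^\kappa ab$, and I expect it to go through cleanly; the other steps (the finite-relatedness half, the inclusion chain) are routine given the cited results. Finally, I would assemble the pieces: Theorem \ref{thm_makfinitelyrelated} and Theorem \ref{thm_frvarietalproperty} handle the $M(A_\kappa)$ terms of the chain, Corollary \ref{cor_corchain} and Theorem \ref{thm_frvarietalproperty} handle the $M(B_\kappa)$ terms, and the isoterm arguments above give all the strict inclusions, completing the proof.
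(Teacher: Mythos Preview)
There is a genuine gap in the non-finitely related half of your argument. Corollary~\ref{cor_corchain} applied to $\mathbf{w}=a^\kappa b^\kappa ab$ tells you that $M(a^\kappa b^\kappa ab)$ is non-finitely related, but Theorem~\ref{thm_frvarietalproperty} cannot carry this over to $M(B_\kappa)$: the two monoids do \emph{not} generate the same variety, since $V(M(A_\kappa))\subseteq V(M(B_\kappa))$ while $V(M(A_\kappa))\not\subseteq V(M(a^\kappa b^\kappa ab))$ (for instance $xyyx$ is an isoterm for $M(A_2)$ but not for $M(abab)$). Your parenthetical ``$M(B_\kappa)$ and $M(\{a^\kappa b^\kappa ab\}\cup A_\kappa)$ coincide'' is just the definition of $B_\kappa$ and does not bridge this gap.

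The paper closes the gap by working with the scheme itself rather than with the bare conclusion ``non-finitely related''. The chain-word scheme $\mathscr{F}$ built from $\mathcal{C}_{n,\kappa,1}$ is $(\kappa{+}1)$-limited, so every variable in each $\mathbf{t}_{ij}$ is strongly primitive for $M(A_\kappa)$; the consistency identities $\mathbf{t}_{ij}^{(kl)}\approx\mathbf{t}_{kl}^{(ij)}$ therefore hold in $M(A_\kappa)$ trivially (both sides collapse under $\mathscr{A}_{\kappa+1}$). Since $\mathscr{F}$ is also a scheme for $V(M(a^\kappa b^\kappa ab))$ by Corollary~\ref{cor_corchain}, and since $M(B_\kappa)$ embeds into $M(A_\kappa)\times M(a^\kappa b^\kappa ab)$, the consistency identities hold in $M(B_\kappa)$ as well, so $\mathscr{F}$ is a scheme for $V(M(B_\kappa))$. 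It cannot come from a term there, because any such term would also serve in the quotient $M(a^\kappa b^\kappa ab)$, contradicting Corollary~\ref{cor_corchain}. Equivalently, you could bypass Corollary~\ref{cor_corchain} entirely and verify the hypotheses of Theorem~\ref{thm_chainwordsnfr} directly for $M(B_\kappa)$; condition~(iii) holds because $a^\kappa b^\kappa ab\in B_\kappa$, and conditions~(i)--(ii) follow from the $(\kappa{+}1)$-limitedness of chain words together with your check for the single word $\mathbf{w}$. The remaining parts of your plan (the inclusions via isoterms, and Theorem~\ref{thm_makfinitelyrelated} for the $M(A_\kappa)$) are fine and match the paper.
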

\begin{proof}
Let $i \geq 2$. The inclusions $V(M(A_i)) \subset V(M(B_i)) \subset V(M(A_{i+1}))$ are obvious. Each $V(M(A_i))$ is finitely related by Theorem \ref{thm_makfinitelyrelated}. Note, the schemes built from the chain word $\mathcal{C}_{n,i,1}$ in Theorem \ref{thm_chainwordsnfr} are schemes for $V(M(A_i))$ since each $\mathcal{C}_{n,i,1}$ is $(i+1)$-limited. It follows that to show $V(M(B_i)) = V(M(A_i \cup \{a^ib^iab\}))$ is non-finitely related, we only need to show that the terms built from $\mathcal{C}_{n,i,1}$ form a scheme for $V(M(a^ib^iab))$ and do not come from a term. This is easily established, however, as $\mathbf{w} = a^ib^iab$ satisfies the requirements of Corollary \ref{cor_corchain}. Therefore each $V(M(B_i))$ is non-finitely related.
\end{proof}

Let $W$ be a set of words. For each $\mathbf{w}_i \in W$, let $X_{\mathbf{w}_i}$ be an alphabet such that $\lvert X_{\mathbf{w}_i} \rvert = \lvert c(\mathbf{w}_i) \rvert$ and  $X_{\mathbf{w}_i} \cap X_{\mathbf{w}_j} = \varnothing$ for all $\mathbf{w}_j \in W\setminus\{\mathbf{w}_i\}$. Let $\mathbf{u}_i$ be the word obtained from $\mathbf{w}_i$ by replacing (one-to-one) each $x \in c(\mathbf{w}_i)$ with a $y \in X_{\mathbf{w}_i}$. Let ~$\mathbf{u} = \prod_{\mathbf{w_i} \in W} \mathbf{u}_i$ so that $V(M(\mathbf{u})) = V(M(W))$ (see the proof of \cite[Theorem 4.4]{jackson2000finitely}). We may apply this trick to each $A_i$ in Corollary \ref{cor_alternatingfrnfr}, producing the word $\mathbf{A}_i$, and give the following sequence of subwords alternating between finitely and non-finitely related:
\[
\mathbf{A}_2 <  a^2b^2ab\mathbf{A}_2 < a^2b^2ab\mathbf{A}_2\mathbf{A}_3 < a^3b^3aba^2b^2ab\mathbf{A}_2\mathbf{A}_3 < \cdots
\]

\subsection{Schemes built from maelstrom words}

Like chain words, maelstrom words utilise a sort of commutativity to produce schemes and a pattern of isoterms to prevent those schemes coming from terms. Importantly, while these schemes will require that $x^py^px^qy^q$ is an isoterm (like the schemes built with chain words), they will not require chain words to commute. This provides the flexibility to study monoids where $x^{p+q}y^{p+q}$ is an isoterm. To our knowledge, maelstrom words have not gained any attention in the finite basis world, unlike their chain word cousins. 

For ease of use of these maelstrom words, we will only define them on an even number of variables. 

\begin{definition}
Given $n,p,q \in \mathbb{N}$, with $n$ even, the \emph{maelstrom word} on $n$ variables with variable exponents $p+q$ is 
given by
\[
    \mathcal{M}_{n,p,q} = x_1^p\cdot \Bigg(\prod_{k=1}^{\frac{n}{2}-1} x_{2k+1}^px_{2k}^p\Bigg)\cdot x_n^p \cdot \Bigg(\prod_{k=0}^{\frac{n}{2}-1}x_{n-(2k+1)}^qx_{n-2k}^q\Bigg).
\]
\end{definition}

One may also construct a maelstrom word in the following way: Let $X_{\text{even}} = \{x_2,x_4,\cdots\,x_n\}$ and $X_{\text{odd}} = \{x_1,x_3,\dots\,x_{n-1}\}$. Let $[X_{\text{even}}n] = x_2x_4\cdots x_n$ and $[nX_{\text{even}}] = x_nx_{n-2}\cdots x_2$. Define $[X_{\text{odd}}(n-1)]$ and $[(n-1)X_{\text{odd}}]$ analogously. Then $\mathcal{M}_{n,p,q}$, with powers $p,q$ removed, can be considered as an interlacing of $[X_{\text{even}}n][nX_{\text{even}}]$ and $[X_{\text{odd}}(n-1)][(n-1)X_{\text{odd}}]$:

\begin{figure}[h]
\centering
\begin{tikzpicture}
\node at (-4,2) {$x_1$};
\node at (-3.5,2) {$x_3$};
\node at (-2.5,2) {$x_5$};
\node at (-1,2) {$\cdots$};

\node at (-0.25,2) {$x_{n-1}$};
\node at (2.25,2) {$x_{n-3}$};
\node (v1) at (-3,1) {$x_2$};
\node (v3) at (-2,1) {$x_4$};
\node at (3.75,1) {$\cdots$};

\node (v7) at (0.5,1) {$x_{n-2}x_n$};
\node (v9) at (1.75,1) {$x_n$};
\node at (-1,1) {$\cdots$};
\node at (4.25,2) {$x_5$};
\node at (5.25,2) {$x_3$};
\node at (6.25,2) {$x_1$};
\node at (-1.5,2) {$x_7$};
\node at (1.25,2) {$x_{n-1}$};
\node at (3.75,2) {$\cdots$};
\node (v11) at (4.75,1) {$x_6$};
\node (v13) at (5.75,1) {$x_4$};
\node (v15) at (6.75,1) {$x_2$};
\node (v2) at (-3,2) {};
\node (v4) at (-2,2) {};

\node (v8) at (0.5,2) {};
\node (v10) at (1.75,2) {};
\node (v12) at (4.75,2) {};
\node (v14) at (5.75,2) {};
\node (v16) at (6.75,2) {};
\draw [->] (v1) edge (v2);
\draw [->] (v3) edge (v4);

\draw [->] (v7) edge (v8);
\draw [->] (v9) edge (v10);
\draw [->] (v11) edge (v12);
\draw [->] (v13) edge (v14);
\draw [->] (v15) edge (v16);
\node (v17) at (2.75,1) {$x_{n-2}$};
\node (v18) at (2.75,2) {};
\draw [->] (v17) edge (v18);
\end{tikzpicture}
\end{figure}

\begin{definition}\label{defn_maelstromodot}
Let $\mathcal{M}_{n,p,q}$ and $\mathbf{w}$ be maelstrom words. Then $\odot$ is defined as follows,
\[
    \mathcal{M}_{n,p,q} \odot \mathbf{w} := x_1^px_3^p\cdots x_{n-2}^px_n^p \cdot \mathbf{w} \cdot x_{n-1}^qx_n^q \cdots x_2^q.
\]
\end{definition}
\noindent Here, $\odot$ inserts $\mathbf{w}$ into the word $\mathcal{M}_{n,p,q}$ after the $p$-th occurrence of any variable $x_i$ but before its $(p+1)$-th occurrence. This ensures that for any variable $y \in c(\mathbf{w})$ we get 
\[
(\mathcal{M}_{n,p,q} \odot \mathbf{w})[x_i,y] = x_i^py^{\text{occ}(y,\mathbf{w})}x_i^q,
\]
avoiding the isoterms $x^py^px^qy^q$ and $x^{p+q}x^{p+q}$. 

To build the desired schemes, we first need to be able to ``slice" maelstrom words into several parts. Let $i,j,n \in \mathbb{N}$ with $n \geq \text{max}(i,j)$. If $i \leq j$ define $\mathcal{M}_{p,q}(i;j)$ by
\[
    \mathcal{M}_{p,q}(i;j) := \mathcal{M}_{n,p,q}[x_i,\dots,x_j].
    \]
If $j < i$ let $X' = \{x_1,\dots,x_j,x_i,\dots,x_n\}$ and define $\mathcal{M}_{p,q}(i;n;j)$ by
\[
\mathcal{M}_{p,q}(i;n;j) := \begin{cases}
    \mathcal{M}_{n,p,q}(x_i,\dots,x_n,x_1,\dots,x_{i-1})[X'] &\text{ if } i\text{ is odd,}\\
    \mathcal{M}_{n+2,p,q}(y,x_i,\dots,x_n,x_1,\dots,x_{i-1},z)[X'] &\text{ if } i \text{ is even}.
\end{cases}
\]

Our aim is to produce maelstrom words for subgraphs (with certain edge conditions) obtained by vertex deletion of the graph in Figure \ref{fig:maelstrom}. To fully capture this idea, we require some care for boundary cases. To this end, let $1$ denote the empty word in the free monoid $A^*$. For $i \in \underline{n}$ we define the following:
\begin{align*}
    \mathcal{M}_{p,q}(i+1;i) &:= 1, \\
    \mathcal{M}_{p,q}(i;n;0) &:= \mathcal{M}_{p,q}(i;n), \\
    \mathcal{M}_{p,q}(n+1;n;i) &:= \mathcal{M}_{p,q}(1;i),\\
    \mathcal{M}_{p,q}(n+1;n;0) &:= 1.
\end{align*}

The reader will notice the similarity between the following lemma and Lemma ~\ref{lemma_chainwordcommute}.
\begin{lemma}\label{lemma_maelstromcommute}
Let $\mathbf{S}$ be a monoid. Fix $p,q \in \mathbb{N}$ and suppose for any even $m,n \in \mathbb{N}$,
\[
\mathbf{S} \models \mathcal{M}_{n,p,q}\odot \overline{\mathcal{M}_{m,p,q}} \approx \overline{\mathcal{M}_{m,p,q}}\odot \mathcal{M}_{n,p,q},
\] 
where $\mathcal{M}_{n,p,q}$ and $\overline{\mathcal{M}_{m,p,q}}$ are maelstrom words over disjoint variable sets. Fix an even $n \in \mathbb{N}$ and let 
\[
\mathbf{t}_{ij} = \mathcal{M}_{p,q}(j+1;n;i-1)\odot\mathcal{M}_{p,q}(i+1,j-1),
\]
for $i < j \leq n$. Then $\mathbf{S}$ satisfies 
\begin{equation}\label{eqn_maelstromcommute}
    \mathbf{t}_{ij}[X_n\setminus\{x_j,x_k,x_l\}] \approx \mathbf{t}_{kl}[X_n\setminus\{x_i,x_j,x_l\}],
\end{equation}
for all $i,j,k,l \in \underline{n}$ with $i < j$ and $k < l$. 
\end{lemma}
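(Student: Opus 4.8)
The plan is to mirror the proof of Lemma~\ref{lemma_chainwordcommute} as closely as possible. Let $\mathbf{u}$ and $\mathbf{v}$ denote the words on the left- and right-hand sides of~(\ref{eqn_maelstromcommute}), respectively. First I would check that $c(\mathbf{u}) = c(\mathbf{v}) = X_n \setminus \{x_i, x_j, x_l, x_k\}$ or $X_n \setminus \{x_i, x_j, x_l\}$ depending on overlaps among the indices; this is a bookkeeping step. The key structural observation, as in the chain-word case, is that deleting any variable from a maelstrom word $\mathcal{M}_{n,p,q}$ (or from a $\odot$-composite of maelstrom words) splits it into an $\odot$-composition of smaller maelstrom words — precisely the boundary cases captured by the conventions $\mathcal{M}_{p,q}(i+1;i) := 1$, $\mathcal{M}_{p,q}(i;n;0) := \mathcal{M}_{p,q}(i;n)$, etc., together with Definition~\ref{defn_maelstromodot}. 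Granting this, the hypothesis $\mathbf{S} \models \mathcal{M}_{n,p,q}\odot \overline{\mathcal{M}_{m,p,q}} \approx \overline{\mathcal{M}_{m,p,q}}\odot \mathcal{M}_{n,p,q}$ (which asserts that the two ``halves'' of the interlacing commute as factors) reduces the problem to showing that $\mathbf{u}$ and $\mathbf{v}$ are $\odot$-compositions of the \emph{same} multiset of maelstrom-word pieces, just composed in a possibly different order.

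Next I would carry out the case analysis on the relative order of $i,j,k,l$ in $\underline{n}$. By the symmetry built into the definition $\mathbf{t}_{ij} = \mathcal{M}_{p,q}(j+1;n;i-1)\odot\mathcal{M}_{p,q}(i+1;j-1)$, deleting $x_j, x_k, x_l$ from $\mathbf{t}_{ij}$ and deleting $x_i, x_j, x_l$ from $\mathbf{t}_{kl}$ should, after the splitting observation, both yield the $\odot$-composite of the arcs $\mathcal{M}_{p,q}$ over the four open intervals of $\underline{n}$ cut out by the four points $\{i,j,k,l\}$ (read cyclically). As in Lemma~\ref{lemma_chainwordcommute}, I expect it suffices to verify the representative case $i < k < j < l$ explicitly: direct expansion of $\mathbf{u}$ gives an $\odot$-composition of the pieces on the intervals $(i,k)$, $(k,j)$, $(j,l)$, $(l,n]\cup[1,i)$, and expansion of $\mathbf{v}$ gives the same four pieces in a cyclically rotated order; the remaining orderings of the four indices (and the degenerate/boundary cases where indices are adjacent, handled by the empty-word conventions) follow by the same computation. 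Applying the commutativity hypothesis repeatedly to reorder the factors then gives $\mathbf{S} \models \mathbf{u} \approx \mathbf{v}$.

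The main obstacle I anticipate is the combinatorial verification that the $\odot$ operation interacts correctly with vertex deletion — that is, proving cleanly that $\mathcal{M}_{p,q}(j+1;n;i-1)\odot\mathcal{M}_{p,q}(i+1;j-1)$, after removing $x_j$, $x_k$, $x_l$, genuinely factors as an $\odot$-composite of the four interval-pieces (and not something subtly different near where the ``odd'' versus ``even'' cases in the definition of $\mathcal{M}_{p,q}(i;n;j)$ switch). The parity-dependent clause in the definition of $\mathcal{M}_{p,q}(i;n;j)$ — inserting dummy variables $y,z$ when $i$ is even — is exactly the kind of thing that can make the two sides of~(\ref{eqn_maelstromcommute}) fail to match up on the nose, so I would be careful to track parities of all the cut points and confirm the dummy variables either appear symmetrically on both sides or get deleted. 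Once the splitting lemma is pinned down, the rest is the same short cyclic-rotation argument as in Lemma~\ref{lemma_chainwordcommute}.
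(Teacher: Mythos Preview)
Your proposal is correct and follows essentially the same route as the paper's own proof: reduce both sides to an $\odot$-composite of the four maelstrom pieces on the cyclic intervals cut out by $\{i,j,k,l\}$, then invoke the hypothesised $\odot$-commutativity (which descends to the pieces because each $\mathcal{M}_{p,q}(r;s)$ or $\mathcal{M}_{p,q}(r;n;s)$ is obtained from a genuine maelstrom word by rearrangement and deletion) to reorder the factors. The only cosmetic differences are that the paper picks the representative case $i<j<k<l$ rather than your $i<k<j<l$, and it dispatches the parity/splitting verification with a bare ``a simple calculation shows'' where you (rightly) flag it as the main thing to check.
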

\begin{proof}
Let $\mathbf{u}, \mathbf{v}$ be the words on the left and right hand side of (\ref{eqn_maelstromcommute}) respectively. It is easily seen that the content of both $\mathbf{u}$ and $\mathbf{v}$ coincide and equals $X_n \setminus \{x_i,x_j,x_k,x_l\}$. We will prove (\ref{eqn_maelstromcommute}) in the case for when $i < j < k < l$, but all cases are similar. A simple calculation shows
\begin{equation*}
\begin{aligned}
    \mathbf{u} = \mathcal{M}_{p,q}(j+1;k-1) &\odot \mathcal{M}_{p,q}(k+1;l-1)  \\
    & \odot \mathcal{M}_{p,q}(l+1;n;i-1) \odot\mathcal{M}_{p,q}(i+1,j-1).
\end{aligned}
\end{equation*}

Given that any two words of the form $\mathcal{M}_{p,q}(r;s)$ or $\mathcal{M}_{p,q}(r;n;s)$ are constructed by first rearranging and then deleting letters from $\mathcal{M}_{n,p,q}$ (or $\mathcal{M}_{n+2,p,q}$), the commutativity of any two such words under $\odot$ follows from $\mathbf{S} \models \mathcal{M}_{n,p,q}\odot \overline{\mathcal{M}}_{m,p,q} \approx \overline{\mathcal{M}}_{m,p,q}\odot \mathcal{M}_{n,p,q}$ provided they do not share any variables. Since $\mathbf{u}$ is made up of such words, and the variable sets for those words are pairwise disjoint, we obtain 
\begin{equation*}
\begin{aligned}
    \mathbf{u} \approx \mathcal{M}_{p,q}(l+1;n;i-1) &\odot \mathcal{M}_{p,q}(i+1,j-1) \\ &\odot \mathcal{M}_{p,q}(j+1;k-1) \odot \mathcal{M}_{p,q}(k+1;l-1) = \mathbf{v}. \\ 
\end{aligned}
\end{equation*}
\end{proof}

The commutativity of maelstrom words with respect to $\odot$ will have the same effect for the schemes we build as the commutativity of chain words had on the schemes in the previous section. 

\begin{figure}[H]
    \centering
    \begin{tikzpicture}

    \node[circle] (v1) at (-2,0.5) {$x_1$};

    \node [shape=circle] (v2) at (-0.5,2.5) {$x_2$};
    \node [shape=circle] (v3) at (1,0.5) {$x_3$};
    \node [shape=circle] (v4) at (2.5,2.5) {$x_4$};
    \node [shape=circle] (v5) at (3.25,1.56) {$\cdots$};
    \node [shape=circle] (v6) at (4,0.5) {$x_{n-1}$};
    \node [shape=circle] (v7) at (5.5,2.5) {$x_n$};
    \draw [->] (v1) edge (v2);

    \draw [->] (v3) edge (v4);

    \draw [->] (v6) edge (v7);
    \draw [->] (v3) edge (v2);
    \draw   [->](v5) edge (v4);
    \draw  [->] (v6) edge (v5);
    \draw [->,bend left=80] (v1) edge (v7);
    
    \end{tikzpicture}
    \caption{An impossible pattern for maelstrom words.}
    \label{fig:maelstrom}
\end{figure}
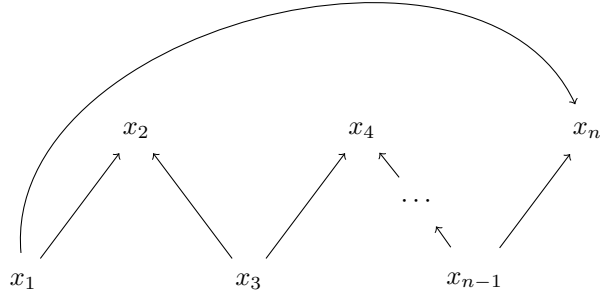

\begin{lemma}\label{lemma_maelstromforbiddenword}
Let $G = (V,E)$ be the directed graph in Figure \ref{fig:maelstrom} and let $n \geq 4$ be even. Fix $p,q \in \mathbb{N}$. Then there is no $n$-ary word $\mathbf{t}$ with the following properties:
\begin{enumerate}[label=(\roman*)]
    \item if $(x,y) \in E(G)$ then $\mathbf{t}[x,y] = x^py^px^qy^q$;
    \item if $(x,y) \not\in E(G)$ then $\mathbf{t}[x,y] \in \{x^py^{p+q}x^q, y^px^{p+q}y^q\}$.
\end{enumerate}
\end{lemma}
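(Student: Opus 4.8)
The strategy is to argue by contradiction in the style of Lemma~\ref{lemma_chainnotterm}: assume such a word $\mathbf{t}$ exists and derive a forbidden first letter. First I would record the structure of the graph $G$ in Figure~\ref{fig:maelstrom}: the vertices are $x_1,\dots,x_n$ ($n$ even), and the edges are precisely $(x_{2k-1},x_{2k})$ for $1\le k\le n/2$ (the ``forward'' edges going up from an odd vertex to the even vertex just to its right), the edges $(x_{2k+1},x_{2k})$ for $1\le k\le n/2-1$ (the ``backward'' edges from an odd vertex down-left to the preceding even vertex), and the single long edge $(x_1,x_n)$. The point is that every vertex has \emph{positive in-degree}: each even vertex $x_{2k}$ receives an edge from $x_{2k-1}$ (and from $x_{2k+1}$ when it exists), $x_n$ receives an edge from $x_{n-1}$ and from $x_1$, and each odd vertex $x_{2k+1}$ with $2k+1<n$ receives\ldots{}\,wait---here I must be careful, since the odd vertices $x_3,x_5,\dots,x_{n-1}$ are \emph{sources} of backward edges, not targets. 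So in fact the odd vertices other than $x_1$ have in-degree zero in $G$, and $x_1$ itself has in-degree zero.

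\textbf{Fixing the argument: use the full edge/non-edge dichotomy.} Because $G$ is not strongly connected, the bare ``pick the first letter'' trick used for chain words does not immediately apply; the extra hypothesis (ii) about non-edges is exactly what compensates. The key combinatorial observation is: for \emph{every} ordered pair of distinct variables $(x,y)$, condition (i) or (ii) pins down the $\{x,y\}$-projection of $\mathbf{t}$ to one of the three patterns $x^py^px^qy^q$, $x^py^{p+q}x^q$, $y^px^{p+q}y^q$; in none of these three does $x$ occur after the last occurrence of $y$ \emph{and} $y$ occur after the last occurrence of $x$, and in none does $x$ occur before the first occurrence of $y$ \emph{and} $y$ occur before the first occurrence of $x$. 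Concretely, in each of the three allowed patterns the word has a well-defined first variable and a well-defined last variable among $\{x,y\}$. So I would define a tournament-type relation: for each pair $\{x,y\}$, say $x\to_{\mathrm{first}} y$ if the first letter of $\mathbf{t}[x,y]$ is $x$. Reading off the three cases: if $(x,y)\in E$ then $\mathbf{t}[x,y]=x^py^px^qy^q$ so $x$ comes first; if $(y,x)\in E$ then $y$ comes first; if neither $(x,y)$ nor $(y,x)$ is an edge then $\mathbf{t}[x,y]\in\{x^py^{p+q}x^q,\,y^px^{p+q}y^q\}$, so whichever of $x,y$ is the ``outer'' variable comes first. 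In all cases $\to_{\mathrm{first}}$ is a total relation on $X_n$, and since it is induced by ``which variable's first occurrence in $\mathbf{t}$ comes earlier,'' it is a strict linear order. Hence $\mathbf{t}$ has a genuine first variable $x_j$, i.e.\ $x_j\to_{\mathrm{first}} x_i$ for all $i\ne j$.

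\textbf{Deriving the contradiction.} Now I exploit that, no matter which vertex $x_j$ is, there is some variable $x_i$ that is forced to occur before it. If $x_j$ is an even vertex $x_{2k}$, then $(x_{2k-1},x_{2k})\in E$, so $\mathbf{t}[x_{2k-1},x_{2k}]=x_{2k-1}^p x_{2k}^p x_{2k-1}^q x_{2k}^q$ begins with $x_{2k-1}$, contradicting that $x_{2k}$ is the first variable of $\mathbf{t}$. If $x_j=x_n$, the edge $(x_1,x_n)\in E$ gives the same contradiction with $x_1$ preceding $x_n$. The remaining case is $x_j$ an odd vertex, say $x_j=x_{2k+1}$ with $0\le k\le n/2-1$ (this includes $x_1$). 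Here I use condition (ii): for any even $x_{2m}$ with $x_{2m}\ne x_{2k}$ and $x_{2m}$ not equal to a neighbour forcing the opposite orientation, the pair $(x_{2k+1},x_{2m})$ is a non-edge (odd vertices are never targets, and $x_{2k+1}$ points only to $x_{2k}$ and possibly $x_{2k+2}$), so $\mathbf{t}[x_{2k+1},x_{2m}]\in\{x_{2k+1}^p x_{2m}^{p+q} x_{2k+1}^q,\; x_{2m}^p x_{2k+1}^{p+q} x_{2m}^q\}$. To force $x_{2k+1}$ last I chain this: pick an even vertex $x_{2m}$ that is itself forced before $x_{2k+1}$ via a non-edge, but then $x_{2m}$ being even is forced \emph{after} $x_{2m-1}$ via the edge $(x_{2m-1},x_{2m})$---so transitivity of $\to_{\mathrm{first}}$ forces $x_{2m-1}$ before $x_{2k+1}$ as well, and iterating down the odd chain we eventually reach $x_1$; but $x_1\to_{\mathrm{first}} x_n$ is impossible since $(x_1,x_n)\in E$ makes $x_1$ precede $x_n$---consistent---so instead I push the other way and use that $x_1$ would have to be first, landing back in the even/$x_n$ case already handled. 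The cleanest packaging is: show that in the linear order $\to_{\mathrm{first}}$, the minimum element $x_j$ cannot be even, cannot be $x_n$, and cannot be odd, since in each case an incoming $E$-edge (at an even vertex or $x_n$) or the non-edge/edge interplay along the path $x_1\to x_2,\,x_3\to x_2,\,x_3\to x_4,\dots$ produces a variable strictly smaller than $x_j$. \textbf{The main obstacle} is exactly this last case: unlike the chain-word cycle of Lemma~\ref{lemma_chainnotterm}, here the ``forcing'' graph of $\to_{\mathrm{first}}$ is not simply $G$ itself but the mixed graph obtained by orienting $E$-edges as given and orienting each non-edge according to (ii); one must check that this mixed graph, together with transitivity, has no vertex of in-degree zero, equivalently that $G$'s non-edges can never consistently all point \emph{away} from a fixed vertex while $G$'s edges point \emph{into} the rest---and the single long edge $(x_1,x_n)$ is the ingredient that closes the loop and rules out $x_1$ (and hence every odd vertex, by propagating the orientation back through $x_1$). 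I would write this final step as a short case analysis: assume $x_j$ is $\to_{\mathrm{first}}$-minimal; if $x_j\in\{x_2,x_4,\dots,x_n\}$ use the edge into it; if $x_j\in\{x_1,x_3,\dots,x_{n-1}\}$, note that $x_j\to_{\mathrm{first}}x_{j+1}$ (where $x_{j+1}$ is the even vertex to its right, or $x_n$ if $j=n-1$), but $(x_{j+1-1},x_{j+1})=(x_j,x_{j+1})\in E$ when $j$ is odd\ldots{}\,indeed $(x_{2k+1},x_{2k+2})$ is an edge of $G$ for $1\le k\le n/2-1$ and $(x_1,x_2)$ is too, so actually every odd vertex $x_j$ with $j<n-1$ has an \emph{out}-edge to the next even vertex and that is consistent with $x_j$ being first---the genuine obstruction is instead the non-edge $(x_{2k+1},x_{2m})$ for $m\ne k,k+1$, which forces $x_{2m}$ before $x_{2k+1}$ whenever the pattern is $x_{2m}^p x_{2k+1}^{p+q}x_{2m}^q$; and it cannot always be the other pattern for all such $m$ because then $x_{2k+1}$ precedes every even vertex except possibly $x_{2k}$ and $x_{2k+2}$, while by the $E$-edges $x_{2k+1}$ is preceded by $x_{2k-1}$\ldots{}\,no, odd vertices have no in-edges. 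I therefore expect the write-up to instead argue at the \emph{last} letter rather than the first: by the symmetric version of the observation above, $\mathbf{t}$ has a genuine last variable, and one checks by the same case analysis (using out-edges at odd vertices and the edge $(x_1,x_n)$) that this last variable cannot be odd, cannot be $x_1$, and cannot be even, giving the contradiction. Running the argument simultaneously on first and last letters, and using the long edge $(x_1,x_n)$ to link the two ends, is what ultimately forces the non-existence of $\mathbf{t}$.
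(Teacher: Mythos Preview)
Your proposal has a genuine gap: the first-letter/last-letter tournament argument does not close, and you recognise this yourself but never repair it. Concretely, take any odd vertex $x_i$; nothing in conditions (i)--(ii) on \emph{pairwise} projections prevents $x_i$ from being the first letter of $\mathbf t$. Every edge out of $x_i$ goes to an adjacent even vertex $x_{i\pm1}$, and $\mathbf t[x_i,x_{i\pm1}]=x_i^p x_{i\pm1}^p x_i^q x_{i\pm1}^q$ starts with $x_i$; every non-edge $\{x_i,x_k\}$ may take the pattern $x_i^p x_k^{p+q} x_i^q$. Dually, an even vertex $x_{i+1}$ adjacent to $x_i$ can be the last letter: edges into $x_{i+1}$ give projections ending in $x_{i+1}$, and non-edges may take the pattern $x_{i+1}^p x_k^{p+q} x_{i+1}^q$\ldots\ except that this pattern also \emph{starts} with $x_{i+1}$, contradicting $x_i$ first---so $x_{i+1}$ last forces every non-edge partner of $x_{i+1}$ to have $x_k^p x_{i+1}^{p+q} x_k^q$. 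But this only constrains $x_{i+1}$'s non-edges, and you never show how to propagate this to a global contradiction. Your transitivity remarks do not land because $\to_{\mathrm{first}}$ is already a linear order; the question is whether (i)--(ii) force an \emph{inconsistency} in that order, and pairwise data alone do not.

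The paper's proof supplies the missing ingredient: after establishing that the first letter $x_i$ is odd, it examines the \emph{three-variable} projection $\mathbf t[x_{i\ominus2},x_i,x_{i\oplus2}]$ (a nest, since these odd vertices are pairwise non-adjacent), then inserts one of the even neighbours $x_{i\oplus1}$ or $x_{i\ominus1}$ using the edge constraints to obtain an explicit \emph{four-variable} projection. In each of the two nesting cases one reads off that a non-adjacent pair such as $(x_{i\ominus2},x_{i\oplus1})$ is forced to have the chain pattern $x^p y^p x^q y^q$, contradicting (ii). The essential idea you are missing is that the contradiction lives at the level of three- and four-variable subwords, where the nested pattern $x^p y^{p+q} x^q$ and the chain pattern $x^p y^p x^q y^q$ interact incompatibly; no amount of reasoning about the global first/last letter alone will expose it.
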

\begin{proof}
For the sake of contradiction, suppose there exists an $n$-ary word $\mathbf{t}$ satisfying the conditions (i) and (ii). Suppose $\mathbf{t}$ begins with $x_i$. Then $i$ must be odd since otherwise $\mathbf{t}[x_i,x_{i+1}] = x_{i+1}^px_{i}^px_{i+1}^qx_i^q$ contradicting the fact that $\mathbf{t}$ begins with $x_i$. Now, consider $\mathbf{t}[x_{i\ominus2},x_{i\ominus1},x_i,x_{i\oplus1},x_{i\oplus2}]$ where $\oplus$ and $\ominus$ are addition and subtraction modulo $n$ respectively. Since there are, pairwise, no edges between $x_{i\ominus2}, x_{i}$ and $x_{i\oplus2}$ in $G$ and $\mathbf{t}$ begins with $x_i$ either
\begin{equation}\label{eqn_maelstrom_forbidden1}
    \mathbf{t}[x_{i\ominus2},x_i,x_{i\oplus2}] = x_i^px_{i\ominus2}^px_{i\oplus2}^{p+q}x_{i\ominus2}^qx_i^q,
\end{equation}
or
\begin{equation}\label{eqn_maelstrom_forbidden2}
    \mathbf{t}[x_{i\ominus2},x_i,x_{i\oplus2}] = x_i^px_{i\oplus2}^px_{i\ominus2}^{p+q}x_{i\oplus2}^qx_i^q.
\end{equation}
If equation (\ref{eqn_maelstrom_forbidden1}) is true then since $\mathbf{t}[x_{i\oplus 1}, x_{i\oplus 2}] = x_{i\oplus 2}^px_{i\oplus 1}^px_{i\oplus 2}^qx_{i\oplus 1}^q$ and $\mathbf{t}[x_i,x_{i \oplus 1}] = x_i^px_{i\oplus 1}^px_i^qx_{i\oplus 1}^q$ we must have
\[
\mathbf{t}[x_{i\ominus2},x_i,x_{i\oplus1},x_{i\oplus2}] = x_i^px_{i\ominus2}^px_{i\oplus2}^{p}x_{i\oplus1}^px_{i\oplus2}^{q}x_{i\ominus2}^qx_i^qx_{i\oplus1}^q,
\]
which contradicts the fact that $(x_{i\ominus2},x_{i\oplus1}) \not\in E(G)$. Similarly, if equation (\ref{eqn_maelstrom_forbidden2}) is true then $\mathbf{t}[x_{i\ominus2},x_{i\ominus1},x_i,x_{i\oplus2}] = x_i^px_{i\oplus2}^px_{i\ominus2}^px_{i\ominus1}^px_{i\ominus2}^qx_{i\oplus2}^qx_i^qx_{i\ominus1}^q$ which contradicts the fact that $(x_{i\oplus 2},x_{i\ominus 1}) \not\in E(G)$. As every case has led to a contradiction, it follows that no such $\mathbf{t}$ can exist. 
\end{proof}

Given a semigroup $\mathbf{S}$, we will say that a set of words $\{\mathbf{u}_1,\dots,\mathbf{u}_n\}$
forms an \emph{island} for $\mathbf{S}$ if for all $i,j \in \underline{n}$ we have $\mathbf{S} \models \mathbf{u}_i \approx \mathbf{u}_j$ and for any word $\mathbf{v}$, $\mathbf{S}$ satisfies $\mathbf{u}_i \approx \mathbf{v}$ if and only if $\mathbf{v} = \mathbf{u}_k$ for some $k \in \underline{n}$.

\begin{theorem}\label{thm_maelstromewords}
Let $\mathbf{S}$ be a finite monoid. Suppose there exists $p,q \in \mathbb{N}$ such that $\mathbf{S}$ satisfies the following:
\begin{enumerate}[label=(\roman*)]
    \item for any even $m,n \in \mathbb{N}$, $\mathbf{S} \models \mathcal{M}_{n,p,q}\odot \overline{\mathcal{M}_{m,p,q}} \approx \overline{\mathcal{M}_{m,p,q}}\odot \mathcal{M}_{n,p,q}$;
    \item $\mathbf{S} \models \mathscr{A}_{\alpha,\beta}$ where $\alpha,\beta \in \mathbb{N}$ and $\alpha \leq 2p+2q$;
    \item $x^py^px^qy^q$ is an isoterm for $\mathbf{S}$;
    \item $x^py^{p+q}x^q \approx y^px^{p+q}y^q$ is an island identity for $\mathbf{S}$. 
\end{enumerate}
Then $\mathbf{S}$ is non-finitely related.
\end{theorem}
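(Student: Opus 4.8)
The plan is to mimic the structure of the proof of Theorem~\ref{thm_chainwordsnfr}, replacing chain words with maelstrom words and Lemma~\ref{lemma_chainwordcommute}/Lemma~\ref{lemma_chainnotterm} by Lemma~\ref{lemma_maelstromcommute}/Lemma~\ref{lemma_maelstromforbiddenword}. Fix an even $n > \lvert S \rvert + 1$ with $n \geq 4$, and for each $1 \leq i < j \leq n$ set
\[
\mathbf{t}_{ij} = \mathcal{M}_{p,q}(j+1;n;i-1)\odot\mathcal{M}_{p,q}(i+1;j-1),
\]
exactly the terms appearing in Lemma~\ref{lemma_maelstromcommute}. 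Let $\mathscr{F} = \{\mathbf{t}_{ij} \mid 1 \leq i < j \leq n\}$. First I would check that $\mathscr{F}$ is a scheme for $V(\mathbf{S})$. The dependency condition is immediate once one verifies $c(\mathbf{t}_{ij}) = X_n \setminus \{x_i\}$, which follows from the definitions of $\mathcal{M}_{p,q}(r;n;s)$, $\mathcal{M}_{p,q}(r;s)$ and $\odot$ together with the boundary conventions. For consistency, fix $i<j$ and $k<l$; by Lemma~\ref{lemma_maelstromcommute} and hypothesis~(i) we get
\[
\mathbf{t}_{ij}^{(kl)}[X_n \setminus \{x_j,x_l\}] \approx \mathbf{t}_{kl}^{(ij)}[X_n \setminus \{x_j,x_l\}].
\]
Now hypothesis~(ii) (with $\alpha \leq 2p+2q$) guarantees that in both $\mathbf{t}_{ij}^{(kl)}$ and $\mathbf{t}_{kl}^{(ij)}$ the only strongly primitive variables are $x_j$ and $x_l$, each occurring exactly $2p+2q$ times (the identification $(ij)$ or $(kl)$ merges two occurrence-blocks of $p+q$ into one of $2p+2q$); by Lemma~\ref{lemma_occurences} and Proposition-style isoterm reasoning this upgrades the displayed identity to $\mathbf{S} \models \mathbf{t}_{ij}^{(kl)} \approx \mathbf{t}_{kl}^{(ij)}$. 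Hence $\mathscr{F}$ is a scheme for $V(\mathbf{S})$.

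Next I would show $\mathscr{F}$ does not come from a term. Let $G$ be the graph of Figure~\ref{fig:maelstrom}. The construction of $\mathcal{M}_{n,p,q}$ is such that $\mathcal{M}_{n,p,q}[x,y] = x^py^px^qy^q$ precisely for the edges $(x,y)$ of $G$ (the ``interlacing'' edges $x_{2k-1}\to x_{2k}$ and $x_{2k+1}\to x_{2k}$, plus $x_1 \to x_n$), while for non-edges $\mathcal{M}_{n,p,q}[x,y] \in \{x^py^{p+q}x^q, y^px^{p+q}y^q\}$ since the two occurrence-blocks of one variable straddle the single block of the other. If $\mathscr{F}$ came from a term $\mathbf{s}$, then $\mathbf{s}^{(ij)} \approx \mathbf{t}_{ij}$ for all $i<j$; specialising to two-variable restrictions and using hypotheses~(iii) and~(iv), for any two distinct variables $x,y$ the word $\mathbf{s}[x,y]$ must be $S$-equivalent to $x^py^px^qy^q$ when $(x,y)\in E(G)$ (and this word is an isoterm by~(iii), so $\mathbf{s}[x,y]=x^py^px^qy^q$ literally), and $S$-equivalent to $x^py^{p+q}x^q$ when $(x,y)\notin E(G)$ — and by~(iv) the only words in that island are $x^py^{p+q}x^q$ and $y^px^{p+q}y^q$, so $\mathbf{s}[x,y] \in \{x^py^{p+q}x^q, y^px^{p+q}y^q\}$. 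Thus $\mathbf{s}$ would satisfy conditions~(i) and~(ii) of Lemma~\ref{lemma_maelstromforbiddenword}, contradicting that lemma. Therefore no such $\mathbf{s}$ exists, $\mathscr{F}$ is a scheme that does not come from a term, and by Theorem~\ref{thm_frconditions} the monoid $\mathbf{S}$ is non-finitely related.

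The main obstacle I anticipate is the bookkeeping in the two-variable restriction argument: one must confirm that $\mathbf{t}_{ij}[x,y]$ really equals $\mathcal{M}_{n,p,q}[x,y]$ for all relevant pairs (including pairs meeting the ``wrap-around'' segment $\mathcal{M}_{p,q}(j+1;n;i-1)$ and its even/odd case split with the padding variables $y,z$), so that the edge/non-edge dichotomy of $G$ is faithfully transmitted to any putative term $\mathbf{s}$. The boundary conventions $\mathcal{M}_{p,q}(i+1;i):=1$, $\mathcal{M}_{p,q}(n+1;n;i):=\mathcal{M}_{p,q}(1;i)$, etc., are exactly what make these restrictions behave uniformly, so the verification is routine but needs care; everything else follows the template of Theorem~\ref{thm_chainwordsnfr} essentially verbatim.
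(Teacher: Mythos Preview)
Your overall strategy matches the paper's, but there is a genuine gap in your choice of terms. You set
\[
\mathbf{t}_{ij} = \mathcal{M}_{p,q}(j+1;n;i-1)\odot\mathcal{M}_{p,q}(i+1;j-1),
\]
the terms from Lemma~\ref{lemma_maelstromcommute}, and then claim $c(\mathbf{t}_{ij}) = X_n\setminus\{x_i\}$. This is false: neither factor contains $x_i$ \emph{or} $x_j$, and $\odot$ introduces no new variables, so in fact $c(\mathbf{t}_{ij}) = X_n\setminus\{x_i,x_j\}$. (Contrast this with the chain-word case, where the substitution $\theta_{ij}$ puts $x_j$ into the first slot of $\mathcal{C}_{n,p,q}$; the maelstrom construction has no such mechanism.) As a consequence your consistency argument collapses: for distinct $i,j,k,l$ the word $\mathbf{t}_{ij}^{(kl)}$ omits $x_j$ entirely, while $\mathbf{t}_{kl}^{(ij)}$ contains $x_j$ with $2(p+q)$ occurrences, so $\mathbf{t}_{ij}^{(kl)}\approx\mathbf{t}_{kl}^{(ij)}$ cannot hold in any monoid for which $x^py^px^qy^q$ is an isoterm. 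Your sentence ``the identification $(ij)$ or $(kl)$ merges two occurrence-blocks of $p+q$ into one of $2p+2q$'' is exactly where the error surfaces: in $\mathbf{t}_{ij}^{(kl)}$ there is no $x_j$-block to merge.

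The paper repairs this by taking
\[
\mathbf{t}_{ij} = x_j^{2p+2q}\cdot\bigl(\mathcal{M}_{p,q}(j+1;n;i-1)\odot\mathcal{M}_{p,q}(i+1;j-1)\bigr),
\]
so that $x_j$ is present and strongly primitive (using $\alpha\le 2p+2q$). With this prefix the content is $X_n\setminus\{x_i\}$, the occurrence counts of $x_j$ and $x_l$ in $\mathbf{t}_{ij}^{(kl)}$ and $\mathbf{t}_{kl}^{(ij)}$ match, and Lemma~\ref{lemma_maelstromcommute} together with $\mathscr{A}_{\alpha,\beta}$ gives consistency exactly as you outlined. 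The remainder of your argument (using (iii), (iv), and Lemma~\ref{lemma_maelstromforbiddenword} to rule out a term) is then correct and identical to the paper's.
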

\begin{proof}
Let $n$ be even and $n > 4$. For any $1 \leq i < j \leq n$, let 
\[
\mathbf{t}_{ij} = x_j^{2p+2q}\cdot \mathcal{M}_{p,q}(j+1;n;i-1)\odot\mathcal{M}_{p,q}(i+1,j-1). 
\]
Let $\mathscr{F} = \{\mathbf{t}_{ij} \mid 1 \leq i < j \leq n\}$. We will show that $\mathscr{F}$ is an scheme for $V(\mathbf{S})$. Dependency follows immediately from the fact that $x_i \not\in c(\mathbf{t}_{ij})$. To show consistency, take any $i,j,k,l \in \underline{n}$ with $i < j$ and $k < l$. 
Condition (ii) ensures that $x_j$ and $x_l$ are strongly primitive in $\mathbf{t}_{ij}^{(kl)}$ and $\mathbf{t}_{kl}^{(ij)}$ with $\text{occ}(x_j,\mathbf{t}_{ij}^{(kl)}) = 2p+2q = \text{occ}(x_j,\mathbf{t}_{kl}^{(ij)})$ and $\text{occ}(x_l,\mathbf{t}_{ij}^{(kl)}) = 2p+2q = \text{occ}(x_l,\mathbf{t}_{kl}^{(ij)})$. Combining this fact and $(i)$ allows us to obtain $\mathbf{S} \models \mathbf{t}_{ij}^{(kl)}\approx \mathbf{t}_{kl}^{(ij)}$ from Lemma \ref{lemma_maelstromcommute}. Therefore $\mathscr{F}$ obeys the consistency condition hence $\mathscr{F}$ is an scheme for $V(\mathbf{S})$. 

Let $G$ denote the graph in Figure \ref{fig:maelstrom}. By the construction of the terms $\mathbf{t}_{ij}$ and ~(iii), if $\mathscr{F}$ did come from a term $\mathbf{t}$, then $\mathbf{t}[x,y] = x^py^qx^qy^q$ whenever $(x,y) \in E(G)$. Furthermore, if ~$(x,y) \not\in E(G)$, then the terms $\mathbf{t}_{ij}$ would also require that $\mathbf{t}[x,y] \in \{x^py^{p+q}x^q, y^px^{p+q}y^q\}$ by (iv). But by Lemma \ref{lemma_maelstromforbiddenword} no such term can exist. It follows that $\mathscr{F}$ does not come from a term and therefore $\mathbf{S}$ is non-finitely related. 
\end{proof}

\begin{eg}\label{eg_ababaabb}
    $M(abab,aabb)$ is non-finitely related. 
\end{eg}
\begin{proof}
    Conditions (ii)--(iv) of Theorem \ref{thm_maelstromewords} with $p=q=1$ hold trivially for $M(abab,aabb)$. Condition (i) can be obtained from the fact that any evaluation of the term $xy^2x$ is $0$ unless $x$ or $y$ is assigned $1$. 
\end{proof}

\subsection{Schemes built from crown words.}

Comparing the schemes built from chains and maelstrom words, the reader will notice that the basic recipe is the same. 
\begin{itemize}
    \item We first show that, given some sort of commutativity between \\ crown/maelstrom words, an appropriate substitution and then deletions of variables forms an identity (see Lemma \ref{lemma_chainwordcommute} and Lemma \ref{lemma_maelstromcommute}). 
    \item Under certain conditions, a collection of such identities forms a scheme (see Theorem \ref{thm_chainwordsnfr} and Theorem \ref{thm_maelstromewords}).
    \item The operations determined by the schemes must obey the cyclic edge relations seen in Figures \ref{fig_chainword} and \ref{fig:maelstrom}. This necessarily requires that these operations are not term functions (see Lemma \ref{lemma_chainnotterm} and Lemma \ref{lemma_maelstromforbiddenword}).
\end{itemize}

We implement the same procedure in this section to show that, for certain monoids, schemes built from crown words do not come from terms. 

\begin{definition}\label{defn_crownword}
Given $n,p,q \in \mathbb{N}$, the \emph{crown word} on $n$ variables is given by
\[
    \mathbf{\mathcal{R}}_{n,p,q} :=
    \begin{cases}
        x_1^p \cdot \Bigg(\displaystyle \prod_{k=1}^{\frac{n}{2}-1}x_{2k+1}^px_{2k}^{p+q}x_{2k-1}^q\Bigg) \cdot x_n^{p+q}x_{n-1}^q &\text{if } n \text{ is even}, \\
        
        x_1^p \cdot \Bigg(\displaystyle \prod_{k=1}^{\frac{n-1}{2}}x_{2k+1}^px_{2k}^{p+q}x_{2k-1}^q\Bigg) \cdot x_n^q &\text{if } n \text{ is odd}.
    \end{cases}
\]
\end{definition}

On the odd-indexed variables, $X_{\text{odd}}$, it is easy to see that $\mathcal{R}_{n,p,q}[X_{\text{odd}}] = \mathcal{C}_{n,p,q}$. To create $\mathcal{R}_{n,p,q}$ from $\mathcal{C}_{n,p,q}$, we insert the even-indexed variables within $\mathcal{C}_{n,p,q}$ as follows: 
\begin{itemize}
    \item if $x_j \in X_{\text{even}}\setminus\{x_n\}$ then insert $x_j^{p+q}$ immediately after the $p$-th occurrence of $x_{j+1}$ and immediately before the $(p+1)$-th occurrence of $x_{j-1}$;
    \item if $n$ is even, insert $x_n^{p+q}$ immediately after the $(p+q)$-th occurrence of $x_{n-2}$ and immediately before the $(p+1)$-th occurrence of $x_{n-1}$.
\end{itemize}

Let $i,j,n \in \mathbb{N}$ with $n \geq \text{max}(i,j)$. If $i \leq j$ then define $\mathcal{R}_{p}(i;j)$ by
\[
    \mathcal{R}_{p,q}(i;j) := \mathcal{R}_{n,p,q}[x_i,\dots,x_j].
\]
If $j < i$ let $X' = \{x_1,\dots,x_j,x_i,\dots,x_n\}$ and define $\mathcal{R}_{p,q}(i;n;j)$ by 
\[
    \mathcal{R}_{p,q}(i;n;j) := 
    \begin{cases}
        \mathcal{R}_{n,p,q}(x_i,\dots,x_n,x_1,\dots,x_{i-1})[X'] &\text{if } i\text{ is odd,} \\
        \mathcal{R}_{n+1,p,q}(y,x_i,\dots,x_n,x_1,\dots,x_{i-1})[X'] &\text{if } i\text{ is even.}
    \end{cases}
\]

As with maelstrom words, we are required to define some boundary cases. For $i \in \underline{n}$ we define the following:
\begin{align*}
    \mathcal{R}_{p,q}(i+1;i) &:= 1, \\
    \mathcal{R}_{p,q}(i;n;0) &:= \mathcal{R}_{p,q}(i;n), \\
    \mathcal{R}_{p,q}(n+1;n;i) &:= \mathcal{R}_{p,q}(1;i),\\
    \mathcal{R}_{p,q}(n+1;n;0) &:= 1.
\end{align*}

\begin{lemma}\label{lemma_crownwordscommute}
Let $\mathbf{S}$ be a monoid. Fix $p,q \in \mathbb{N}$ and suppose for any $m,n \in \mathbb{N}$
\[
    \mathbf{S} \models \mathcal{R}_{n,p,q}\overline{\mathcal{R}_{m,p,q}} \approx \overline{\mathcal{R}_{m,p,q}}\mathcal{R}_{n,p,q}, 
\]
where $\mathcal{R}_{n,p,q}$ and $\overline{\mathcal{R}_{m,p,q}}$ are crown words over disjoint variable sets. Fix $n \in \mathbb{N}$ and let
\[
    \mathbf{t}_{ij} = \mathcal{R}_{p,q}(j+1;n;i-1)\mathcal{R}_{p,q}(i+1,j-1) 
\] 
for any $i < j \leq n$. Then $\mathbf{S}$ satisfies
\begin{equation}\label{eqn_crowncommute}
    \mathbf{t}_{ij}[X_n\setminus\{x_k,x_l\}] \approx \mathbf{t}_{kl}[X_n\setminus\{x_i,x_j\}],
\end{equation}
for all $i,j,k,l \in \underline{n}$ with $i < j$ and $k < l$. 
\end{lemma}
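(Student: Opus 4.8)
The plan is to follow the template established by Lemma~\ref{lemma_chainwordcommute} and Lemma~\ref{lemma_maelstromcommute}, since the statement of Lemma~\ref{lemma_crownwordscommute} is the crown-word analogue and the proof structure should be essentially identical. First I would set $\mathbf{u}$ and $\mathbf{v}$ to be the words on the left and right hand side of (\ref{eqn_crowncommute}) respectively, and check that $c(\mathbf{u}) = c(\mathbf{v}) = X_n \setminus \{x_i,x_j,x_k,x_l\}$; this is a routine bookkeeping check using the definition of $\mathcal{R}_{p,q}(r;s)$ and $\mathcal{R}_{p,q}(r;n;s)$ as rearrangements-then-deletions of $\mathcal{R}_{n,p,q}$ (or $\mathcal{R}_{n+1,p,q}$), together with the boundary-case conventions just introduced. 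Note that, unlike the chain and maelstrom cases, here we delete only \emph{two} variables $\{x_k,x_l\}$ from $\mathbf{t}_{ij}$ (and $\{x_i,x_j\}$ from $\mathbf{t}_{kl}$) rather than three; this is because the crown-word terms $\mathbf{t}_{ij}$ are not pre-multiplied by a strongly primitive power, so $x_j$ survives. I would want to double-check that $x_j$ and $x_l$ occurrences genuinely match up in $\mathbf{u}$ and $\mathbf{v}$ after the relevant deletions, but this follows from the content computation.

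The heart of the argument is then to observe that assigning the identity $1$ to a variable in a crown word splits it into a product of smaller crown words — this is exactly the ``slicing'' behaviour encoded in the definitions of $\mathcal{R}_{p,q}(i;j)$, $\mathcal{R}_{p,q}(i;n;j)$ and their boundary cases. Consequently, both $\mathbf{u}$ and $\mathbf{v}$ are products (concatenations) of crown words over pairwise disjoint variable sets. By the hypothesis $\mathbf{S} \models \mathcal{R}_{n,p,q}\overline{\mathcal{R}_{m,p,q}} \approx \overline{\mathcal{R}_{m,p,q}}\mathcal{R}_{n,p,q}$ for crown words over disjoint variables, any two such factors commute in $\mathbf{S}$, so it suffices to verify that $\mathbf{u}$ and $\mathbf{v}$ consist of the \emph{same multiset} of crown-word factors. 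Exactly as in the previous two lemmas, I would present one representative case, say $i < j < k < l$ (or $i < k < j < l$, whichever is cleanest to write), carry out the direct computation showing
\[
\mathbf{u} = \mathcal{R}_{p,q}(j+1;k-1)\,\mathcal{R}_{p,q}(k+1;l-1)\,\mathcal{R}_{p,q}(l+1;n;i-1)\,\mathcal{R}_{p,q}(i+1;j-1),
\]
and then commute the factors to obtain $\mathbf{v} = \mathcal{R}_{p,q}(l+1;n;i-1)\,\mathcal{R}_{p,q}(i+1;j-1)\,\mathcal{R}_{p,q}(j+1;k-1)\,\mathcal{R}_{p,q}(k+1;l-1)$, and remark that the remaining orderings of $i,j,k,l$ are handled by the identical computation.

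The main obstacle — and the only part requiring genuine care rather than pattern-matching — is the interaction between even- and odd-indexed variables at the ``seams'' where slicing occurs, and in particular the handling of the even-$i$ boundary case where $\mathcal{R}_{p,q}(i;n;j)$ is defined via $\mathcal{R}_{n+1,p,q}$ with a dummy variable $y$. One must check that deleting $x_{k},x_{l}$ (resp.\ $x_i,x_j$) really does cleave the crown word at precisely the positions where the even-variable blocks $x_{2k}^{p+q}$ sit, so that no ``half-block'' of some $x_m^{p+q}$ is left straddling two factors, and that the dummy-variable conventions make the boundary factors into genuine crown words rather than truncated fragments. This is exactly the kind of check that the definitions of $\mathcal{R}_{p,q}(i;j)$ etc.\ were engineered to make routine, so I expect it to go through, but it is where a careless argument would break; I would state explicitly that it is enough to show $\mathbf{u}$ and $\mathbf{v}$ are products of the same crown words, reducing everything to the combinatorial identity of the factor multisets, and then dispatch the representative case by direct calculation as above.
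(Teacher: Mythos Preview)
Your proposal is correct and follows exactly the paper's approach: set $\mathbf{u},\mathbf{v}$, note equal content, reduce to showing the two sides are products of the same crown-word factors via the commutation hypothesis, and dispatch the representative case $i<j<k<l$ with precisely the factorisation you wrote. One minor slip: $x_j$ does \emph{not} survive in $\mathbf{t}_{ij}$ (its content is $X_n\setminus\{x_i,x_j\}$, not $X_n\setminus\{x_i\}$), so your side-remark about checking that $x_j$ and $x_l$ occurrences match up is moot---but this does not affect the argument.
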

\begin{proof}
    Let $\mathbf{u},\mathbf{v}$ be the words on the left and right hand side of (\ref{eqn_crowncommute}) respectively. Clearly $c(\mathbf{u}) = c(\mathbf{v})$. We will only prove the lemma for the case when $i < j < k < l$, but all cases can be obtained in a similar way. A simple calculation shows
\[
    \mathbf{u} = \mathcal{R}_{p,q}(j+1;k-1)\mathcal{R}_{p,q}(k+1;l-1)\mathcal{R}_{p,q}(l+1;n;i-1)\mathcal{R}_{p,q}(i+1,j-1).
\]
Each word of the form $\mathcal{R}_{p,q}$ that makes up $\mathbf{u}$ is obtained by first rearranging and then deleting letters from a crown word. Since crown words commute on $\mathbf{S}$, it follows that words of the form $\mathcal{R}_{p,q}$ also commute. We then get 
\[
    \mathbf{u} \approx \mathcal{R}_{p,q}(l+1;n;i-1)\mathcal{R}_{p,q}(i+1,j-1)\mathcal{R}_{p,q}(j+1;k-1)\mathcal{R}_{p,q}(k+1;l-1) = \mathbf{v},
\]
as required.
\end{proof}

In the previous sections, we saw that it is convenient to use a graph and an edge relation to show that certain operations cannot be term functions. For crown words, we re-use Figure \ref{fig:maelstrom}, now requiring that if $(x,y) \in E(G)$ then $t[x,y] = x^py^{p+q}x^q$. 

\begin{lemma}\label{lemma_crownforbiddenword}
Let $G = (V,E)$ be the directed graph in Figure \ref{fig:maelstrom} and let $n > 4$ be even. Fix $p,q \in \mathbb{N}$. Then there is no $n$-ary word $\mathbf{t}$ with the following properties:
\begin{enumerate}[label=(\roman*)]
    \item if $(x,y) \in E(G)$ then $\mathbf{t}[x,y] = x^py^{p+q}x^q$;
    \item if $(x,y) \not\in E(G)$ then $\mathbf{t}[x,y] \in \{x^py^px^qy^q,y^px^py^qx^q,x^{p+q}y^{p+q},y^{p+q}x^{p+q}\}$.
\end{enumerate}
\end{lemma}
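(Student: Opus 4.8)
The plan is to mimic the proofs of Lemma~\ref{lemma_chainnotterm} and Lemma~\ref{lemma_maelstromforbiddenword}, arguing by contradiction about the first variable of a hypothetical word $\mathbf{t}$ and then propagating constraints around the cycle in Figure~\ref{fig:maelstrom}. Suppose such an $\mathbf{t}$ exists and that it begins with the variable $x_i$. The crucial structural facts about $G$ are: around the ``rim'' there are edges $x_{i\ominus 2}\to x_i$ and $x_i \to x_{i\oplus 2}$ for each even $i$ (and the corresponding odd-indexed chain through $x_1,x_3,\dots,x_{n-1}$ together with the long edge $x_1\to x_n$), there are the ``spoke'' edges $x_{i\oplus 1}\to x_i$ between consecutive odd/even pairs, and there are no edges among the three pairwise-nonadjacent vertices $x_{i\ominus 2},x_i,x_{i\oplus 2}$. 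Condition (i) forces $\mathbf{t}[x,y]=x^py^{p+q}x^q$ on edges, so in particular for any edge $(y,x_i)$ into $x_i$ (e.g.\ $y=x_{i\ominus 2}$ along the rim, or $y=x_{i\oplus 1}$ along a spoke) we get $\mathbf{t}[y,x_i]=y^px_i^{p+q}y^q$, which is consistent with $\mathbf{t}$ beginning with $x_i$ only if $x_i$ has no ``in-edges'' forcing a prefix before it --- here the shape $y^p\cdots x_i^{p+q}\cdots y^q$ means $y$ precedes $x_i$, contradicting that $\mathbf{t}$ starts with $x_i$. So the very first variable must be a \emph{source} of $G$; inspecting Figure~\ref{fig:maelstrom}, the only vertex with no incoming edge is $x_1$ (every even vertex receives a rim edge and a spoke edge, every odd vertex $x_3,\dots,x_{n-1}$ receives a rim edge, and $x_n$ receives the long edge $x_1\to x_n$ and the rim edge $x_{n-1}\to x_n$).

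Having pinned the first variable as $x_1$, I would then look at the local picture $\mathbf{t}[x_{n-1},x_n,x_1,x_2,x_3]$ --- the vertices adjacent to $x_1$ along the cycle together with their neighbours --- exactly as Lemma~\ref{lemma_maelstromforbiddenword} examines $\mathbf{t}[x_{i\ominus 2},\dots,x_{i\oplus 2}]$. Since $x_1,x_{n-1},x_3$ are pairwise nonadjacent (edges only go $x_1\to x_n$, $x_{n-1}\to x_n$, $x_{n-1}\to x_{n-2}$, etc.), condition (ii) restricts $\mathbf{t}[x_{n-1},x_3]$ etc.\ to one of the four listed patterns; combined with $\mathbf{t}$ beginning with $x_1$ and with the edge constraints $\mathbf{t}[x_1,x_n]=x_1^px_n^{p+q}x_1^q$, $\mathbf{t}[x_{n-1},x_n]=x_{n-1}^px_n^{p+q}x_{n-1}^q$, $\mathbf{t}[x_3,x_2]=x_3^px_2^{p+q}x_3^q$, $\mathbf{t}[x_1,x_2]=x_1^px_2^{p+q}x_1^q$, one splits into the two cases for the relative order of the nonadjacent triple and in each case derives the order of a fourth variable that violates a ``no edge'' constraint (some $\mathbf{t}[x_a,x_b]$ is forced into a shape not in the list of (ii), e.g.\ an interleaving $x_a^p x_b^p x_a^q x_b^q$-type pattern where none is allowed, or conversely a forbidden non-interleaved pattern on an actual edge). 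This is the same two-branch case analysis as in the maelstrom lemma, just with the roles of $p,q$ and the shapes $x^py^px^qy^q$ versus $x^py^{p+q}x^q$ swapped to match the crown construction.

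The main obstacle --- really the only non-mechanical part --- is getting the bookkeeping right at the ``seam'' of the cycle, i.e.\ around $x_n$, $x_1$, and $x_2$, where the indexing wraps and the parity of indices matters (the crown word treats even and odd indexed variables asymmetrically, and Definition~\ref{defn_crownword} itself splits on the parity of $n$). One must be careful that the long edge $x_1\to x_n$ and the rim edge $x_{n-1}\to x_n$ interact correctly with the spoke structure, and that the chosen window of five variables genuinely exhibits three pairwise-nonadjacent vertices so that (ii) applies. Once that local configuration is isolated, the contradiction is forced exactly as before: the two possible orderings of the nonadjacent triple each propagate (using the spoke and rim edge identities) to an ordering of a further variable pair that is neither an allowed interleaving nor an allowed block pattern, contradicting (i) or (ii). I would therefore structure the write-up as: (1) show the first variable is a source, hence $x_1$; (2) set up $\mathbf{t}[x_{n-1},x_n,x_1,x_2,x_3]$ and note the nonadjacency; (3) branch on the two sub-cases and in each extract a forbidden induced two-variable pattern; (4) conclude no such $\mathbf{t}$ exists.

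\begin{proof}
Suppose, for contradiction, that an $n$-ary word $\mathbf{t}$ with properties (i) and (ii) exists, and say $\mathbf{t}$ begins with the variable $x_i$. If $x_i$ has an in-edge in $G$, say $(y,x_i)\in E(G)$, then by (i) we have $\mathbf{t}[y,x_i]=y^px_i^{p+q}y^q$, so $y$ occurs before $x_i$ in $\mathbf{t}$, contradicting that $\mathbf{t}$ begins with $x_i$. Hence $x_i$ is a source of $G$. Inspecting Figure~\ref{fig:maelstrom}, every even-indexed vertex $x_{2k}$ has an in-edge (from $x_{2k+1}$), every odd-indexed vertex $x_3,x_5,\dots,x_{n-1}$ has an in-edge along the rim, and $x_n$ has the in-edge $x_1\to x_n$. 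Thus the only source is $x_1$, and $\mathbf{t}$ begins with $x_1$.

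Consider $\mathbf{t}[x_1,x_2,x_3,x_{n-1},x_n]$. The vertices $x_1$, $x_3$ and $x_{n-1}$ are pairwise non-adjacent in $G$, so (ii) applies to each of $\mathbf{t}[x_1,x_3]$, $\mathbf{t}[x_1,x_{n-1}]$ and $\mathbf{t}[x_3,x_{n-1}]$. Since $\mathbf{t}$ begins with $x_1$ and, by (i), $\mathbf{t}[x_1,x_n]=x_1^px_n^{p+q}x_1^q$ and $\mathbf{t}[x_1,x_2]=x_1^px_2^{p+q}x_1^q$, both $x_2$ and $x_n$ occur strictly between the first and last occurrences of $x_1$; in particular neither $x_2$ nor $x_n$ precedes $x_1$. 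Because $x_1$, $x_3$, $x_{n-1}$ are pairwise non-adjacent and $\mathbf{t}$ begins with $x_1$, the only shapes in (ii) compatible with this for $\mathbf{t}[x_1,x_3]$ and $\mathbf{t}[x_1,x_{n-1}]$ are $x_1^{p+q}x_3^{p+q}$ and $x_1^{p+q}x_{n-1}^{p+q}$ respectively. Hence there are two cases according to the order of $x_3$ and $x_{n-1}$:
\begin{equation}\label{eqn_crownforbidden1}
    \mathbf{t}[x_1,x_3,x_{n-1}] = x_1^{p+q}x_3^{p+q}x_{n-1}^{p+q},
\end{equation}
or
\begin{equation}\label{eqn_crownforbidden2}
    \mathbf{t}[x_1,x_3,x_{n-1}] = x_1^{p+q}x_{n-1}^{p+q}x_3^{p+q}.
\end{equation}

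Suppose \eqref{eqn_crownforbidden1} holds. By (i), $\mathbf{t}[x_3,x_2]=x_3^px_2^{p+q}x_3^q$ and $\mathbf{t}[x_1,x_2]=x_1^px_2^{p+q}x_1^q$, so $x_2$ occurs after the $p$-th occurrence of $x_3$ and before the $(p+1)$-th occurrence of $x_1$; combined with \eqref{eqn_crownforbidden1} this forces
\[
\mathbf{t}[x_1,x_2,x_3,x_{n-1}] = x_1^px_3^px_2^{p+q}x_3^qx_{n-1}^{p+q}x_1^q,
\]
so $x_2$ occurs before $x_{n-1}$ while $x_{n-1}^{p+q}$ appears as a block, giving $\mathbf{t}[x_2,x_{n-1}]=x_2^{p+q}x_{n-1}^{p+q}$, which is permitted; but then all occurrences of $x_2$ precede all of $x_{n-1}$, whereas $\mathbf{t}[x_1,x_{n-1}]=x_1^{p+q}x_{n-1}^{p+q}$ and $\mathbf{t}[x_1,x_2]=x_1^px_2^{p+q}x_1^q$ force the $(p+1)$-th occurrence of $x_1$ to lie after $x_2$ and before $x_{n-1}$, contradicting that $x_{n-1}$ occurs as the single block $x_{n-1}^{p+q}$ with no $x_1$ interleaved only if that block lies entirely after the last $x_1$ --- but $\mathbf{t}[x_1,x_{n-1}]=x_1^{p+q}x_{n-1}^{p+q}$ already says so, so instead we use the edge $x_{n-1}\to x_n$: $\mathbf{t}[x_{n-1},x_n]=x_{n-1}^px_n^{p+q}x_{n-1}^q$ together with $\mathbf{t}[x_1,x_n]=x_1^px_n^{p+q}x_1^q$ forces an occurrence of $x_1$ after the $p$-th occurrence of $x_{n-1}$, contradicting \eqref{eqn_crownforbidden1}, which places all of $x_1$ before all of $x_{n-1}$.

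Now suppose \eqref{eqn_crownforbidden2} holds. Then all occurrences of $x_1$ precede all occurrences of $x_{n-1}$, again contradicting the fact that $\mathbf{t}[x_{n-1},x_n]=x_{n-1}^px_n^{p+q}x_{n-1}^q$ and $\mathbf{t}[x_1,x_n]=x_1^px_n^{p+q}x_1^q$ force an occurrence of $x_1$ after the $p$-th occurrence of $x_{n-1}$.

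As every case leads to a contradiction, no such word $\mathbf{t}$ exists.
\end{proof}
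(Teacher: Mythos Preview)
Your argument rests on a misreading of the graph in Figure~\ref{fig:maelstrom}. The only edges in $G$ are the odd-to-even edges $x_{2k-1}\to x_{2k}$, $x_{2k+1}\to x_{2k}$ (for $1\le k\le n/2-1$), together with $x_{n-1}\to x_n$ and $x_1\to x_n$. There are no ``rim'' edges among odd vertices or among even vertices. Consequently \emph{every} odd-indexed vertex is a source, not just $x_1$: your claim that ``every odd-indexed vertex $x_3,x_5,\dots,x_{n-1}$ has an in-edge along the rim'' is false, and so the conclusion that $\mathbf{t}$ must begin with $x_1$ is unjustified. The paper's proof works with an arbitrary odd $i$ throughout, exactly for this reason.

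Even granting $i=1$, your case split is incomplete and the subsequent calculation is inconsistent. From ``$\mathbf{t}$ begins with $x_1$'' and condition~(ii) you can only conclude $\mathbf{t}[x_1,x_3]\in\{x_1^px_3^px_1^qx_3^q,\;x_1^{p+q}x_3^{p+q}\}$; you silently drop the interleaved option. Worse, under your case~\eqref{eqn_crownforbidden1} you assert $\mathbf{t}[x_1,x_2,x_3,x_{n-1}] = x_1^px_3^px_2^{p+q}x_3^qx_{n-1}^{p+q}x_1^q$, but deleting $x_2,x_{n-1}$ from this word gives $x_1^px_3^{p+q}x_1^q$, contradicting the case hypothesis $\mathbf{t}[x_1,x_3]=x_1^{p+q}x_3^{p+q}$. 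The paper avoids all of this by branching not on $\mathbf{t}[x_1,x_3,x_{n-1}]$ but on the four possibilities in~(ii) for $\mathbf{t}[x_{i\ominus1},x_{i\oplus1}]$ (the two \emph{even} neighbours of $x_i$), and then brings in $x_{i\ominus2}$ or $x_{i\oplus2}$ to force a forbidden pattern on a non-edge.
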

\begin{proof}
    For the sake of contradiction suppose there exists an $n$-ary word $\mathbf{t}$ such that ~$\mathbf{t}$ satisfies (i) and (ii). Suppose $\mathbf{t}$ begins with $x_i$. Then using Figure \ref{fig:maelstrom}, $i$ must be odd since if $i$ were even then $\mathbf{t}[x_i,x_{i\oplus 1}] = x_{i\oplus1}^px_{i}^{p+q}x_{i\oplus1}^q$ contradicting the fact that $\mathbf{t}$ begins with $x_i$. Now, by (ii) one of the following is true:
    \begin{enumerate}[label=(\arabic*)]
        \item $\mathbf{t}[x_{i \ominus 1},x_{i \oplus 1}] = x_{i \ominus 1}^p x_{i \oplus 1}^px_{i \ominus 1}^q x_{i \oplus 1}^q$;
        \item $\mathbf{t}[x_{i \ominus 1},x_{i \oplus 1}] = x_{i \oplus 1}^p x_{i \ominus 1}^px_{i \oplus 1}^q x_{i \ominus 1}^q$;
        \item $\mathbf{t}[x_{i \ominus 1},x_{i \oplus 1}] = x_{i \ominus 1}^{p+q}x_{i \oplus 1}^{p+q}$;
        \item $\mathbf{t}[x_{i \ominus 1},x_{i \oplus 1}] =x_{i \oplus 1}^{p+q}x_{i \ominus 1}^{p+q}$.
    \end{enumerate}
    Suppose (1) is true. Then since $(x_{i \ominus 2},x_{i \ominus 1}) \in E(G)$ and $x_{i \ominus 2}$ and $x_{i \oplus 1}$ are not adjacent in $G$ we have 
    \[
        \mathbf{t}[x_{i \ominus 2},x_{i \ominus 1},x_{i \oplus 1}] = x_{i \ominus 2}^p x_{i \ominus 1}^p x_{i \oplus 1}^px_{i \ominus 1}^q x_{i \ominus 2}^q x_{i \oplus 1}^q.
    \]
    But as $x_i$ appears first and $\mathbf{t}[x_i,x_{i \oplus 1}] = x_i^px_{i \oplus 1}^{p+q}x_i^q$ we have $\mathbf{t}[x_i,x_{i \ominus 2}] = x_i^px_{i \ominus 2}^{p+q}x_i^q$ contradicting the fact that $(x_i,x_{i \ominus 2}) \not\in E(G)$. A similar contradiction can be found if we assume (2) is true.

    Now suppose that (3) is true. Note, since $\mathbf{t}$ begins with $x_i$ we have $\mathbf{t}[x_{i \ominus 2},x_i] \in \{x_i^px_{i \ominus 2}^px_i^qx_{i \ominus 2}^q, x_i^{p+q}x_{i \ominus 2}^{p+q}\}$ by (ii). If $\mathbf{t}[x_{i \ominus 2},x_i] = x_i^px_{i \ominus 2}^px_i^qx_{i \ominus 2}^q$ then 
    \[
        \mathbf{t}[x_{i \ominus 2},x_{i \ominus 1},x_i,x_{i \oplus 1}] = x_i^p x_{i \ominus 2}^p x_{i \ominus 1}^{p+q}x_{i \oplus 1}^{p+q} x_i^q x_{i \ominus 2}^q.
    \]
    But then $\mathbf{t}[x_{i \ominus 2},x_{i \oplus 1}] = x_{i \ominus 2}^px_{i \oplus 1}^{p+q} x_{i \ominus 2}^q$ contradicts the fact that $(x_{i \ominus 2},x_{i \oplus 1}) \not\in E(G)$. If $\mathbf{t}[x_{i \ominus 2},x_i] = x_i^{p+q}x_{i \ominus 2}^{p+q}$ then since $(x_{i},x_{i \ominus 1}),(x_{i},x_{i \oplus 1})  \in E(G)$ we have
    \[
         \mathbf{t}[x_{i \ominus 2},x_{i \ominus 1},x_i,x_{i \oplus 1}] = x_i^px_{i \ominus 1}^{p+q}x_{i \oplus 1}^{p+q}x_i^qx_{i \ominus 2}^{p+q}.
    \]
    But then $\mathbf{t}[x_{i \ominus 1},x_{i \ominus 2}] = x_{i \ominus 1}^{p+q}x_{i \ominus 2}^{p+q}$ contradicting the fact that $(x_{i \ominus 2},x_{i \ominus 1}) \in E(G)$. A similar contradiction can be found for (4). 

    Since cases (1) through (4) are exhaustive by (ii), and each results in a contradiction, no word $\mathbf{t}$ satisfying the requirements of the lemma's statement can exist. 
\end{proof}

The proof of the following theorem is similar to Theorem \ref{thm_chainwordsnfr} and \ref{thm_maelstromewords}.

\begin{theorem}\label{thm_crownwords}
Let $\mathbf{S}$ be a finite monoid. Suppose there exists $p,q \in \mathbb{N}$ such that $\mathbf{S}$ satisfies the following:
\begin{enumerate}[label=(\roman*)]
    \item for any $m,n \in \mathbb{N}$, $\mathbf{S} \models \mathcal{R}_{n,p,q}\overline{\mathcal{R}_{m,p,q}} \approx \overline{\mathcal{R}_{m,p,q}}\mathcal{R}_{n,p,q}$;
    \item $\mathbf{S} \models \mathscr{A}_{\alpha,\beta}$, where $\alpha,\beta \in \mathbb{N}$ and $\alpha \leq 2p+2q$;
    \item $x^py^{p+q}x^q$ is an isoterm for $\mathbf{S}$;
    \item $\{x^py^px^qy^q, y^px^py^qx^q, x^{p+q}y^{p+q}, y^{p+q}x^{p+q}\}$ is an island for $\mathbf{S}$.
\end{enumerate}
Then $\mathbf{S}$ is non-finitely related.
\end{theorem}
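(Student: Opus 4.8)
The plan is to transcribe the argument used for Theorems~\ref{thm_chainwordsnfr} and~\ref{thm_maelstromewords}, with crown words in place of chain/maelstrom words, Lemma~\ref{lemma_crownwordscommute} in place of Lemmas~\ref{lemma_chainwordcommute}/\ref{lemma_maelstromcommute}, and Lemma~\ref{lemma_crownforbiddenword} in place of Lemmas~\ref{lemma_chainnotterm}/\ref{lemma_maelstromforbiddenword}. For every even $n>4$ and every $1\le i<j\le n$ I set
\[
\mathbf{t}_{ij}=x_j^{2p+2q}\cdot\mathcal{R}_{p,q}(j+1;n;i-1)\,\mathcal{R}_{p,q}(i+1,j-1),
\]
and put $\mathscr{F}=\{\mathbf{t}_{ij}\mid 1\le i<j\le n\}$. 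The two crown factors contribute precisely the variables $X_n\setminus\{x_i,x_j\}$, so the prefix $x_j^{2p+2q}$ restores $x_j$ to the content and, since $\alpha\le 2p+2q$ by~(ii), makes $x_j$ strongly primitive. (Conditions~(ii) and~(iii) together force $\mathbf{S}$ not to be a group: a finite group satisfying $\mathscr{A}_{\alpha,\beta}$ is abelian, so $x^py^{p+q}x^q$ could not be an isoterm. Hence $\mathbf{S}$ has a non-identity idempotent, $V(\mathbf{S})$ contains the two-element semilattice, and Lemmas~\ref{lemma_containssemilattice}, \ref{lemma_occurences} and~\ref{lemma_schemeprimitive} apply.)

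I would first verify that $\mathscr{F}$ is a scheme for $V(\mathbf{S})$. Dependency is immediate from $c(\mathbf{t}_{ij})=X_n\setminus\{x_i\}$. For consistency, fix $i<j$ and $k<l$: by~(ii) the variables $x_j$ and $x_l$ are strongly primitive in both $\mathbf{t}_{ij}^{(kl)}$ and $\mathbf{t}_{kl}^{(ij)}$, and Lemma~\ref{lemma_occurences} shows they occur there with matching multiplicities (the prefix $x_j^{2p+2q}$ keeping this count symmetric even when $\{i,j\}$ and $\{k,l\}$ overlap). Deleting $x_j$ and $x_l$ from both sides turns $\mathbf{t}_{ij}^{(kl)}$ and $\mathbf{t}_{kl}^{(ij)}$ into the words equated in~(\ref{eqn_crowncommute}), so~(i) and Lemma~\ref{lemma_crownwordscommute} give $\mathbf{S}\models\mathbf{t}_{ij}^{(kl)}[X_n\setminus\{x_j,x_l\}]\approx\mathbf{t}_{kl}^{(ij)}[X_n\setminus\{x_j,x_l\}]$; reattaching the equinumerous strongly primitive occurrences of $x_j$ and $x_l$ via $\mathbf{S}\models\mathscr{A}_{\alpha,\beta}$ yields $\mathbf{S}\models\mathbf{t}_{ij}^{(kl)}\approx\mathbf{t}_{kl}^{(ij)}$.

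Then I would show $\mathscr{F}$ does not come from a term. Suppose it comes from $\mathbf{s}$; since schemes are assumed to depend on all their variables, $\mathbf{s}$ is $n$-ary, and $V(\mathbf{S})\models\mathbf{s}^{(ij)}\approx\mathbf{t}_{ij}$ for all $i<j$. Let $G$ be the graph of Figure~\ref{fig:maelstrom}. For a given ordered pair of distinct variables $x_a,x_b$, pick $i,j\notin\{a,b\}$ (possible since $n>4$) so that the occurrences of $x_a,x_b$ survive intact in $\mathbf{t}_{ij}$, using the wrap-around slices $\mathcal{R}_{p,q}(i;n;j)$ for the ``bend'' pair $x_1,x_n$. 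Then $\mathbf{s}[x_a,x_b]\approx\mathbf{t}_{ij}[x_a,x_b]$, and reading off the crown word: if $(x_a,x_b)\in E(G)$ this is $x_a^px_b^{p+q}x_a^q$, so $\mathbf{s}[x_a,x_b]=x_a^px_b^{p+q}x_a^q$ by the isoterm hypothesis~(iii); if $(x_a,x_b)\notin E(G)$ it lies in $\{x_a^px_b^px_a^qx_b^q,\,x_b^px_a^px_b^qx_a^q,\,x_a^{p+q}x_b^{p+q},\,x_b^{p+q}x_a^{p+q}\}$, so $\mathbf{s}[x_a,x_b]$ lies in the same set by the island hypothesis~(iv). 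Hence $\mathbf{s}$ satisfies both conditions of Lemma~\ref{lemma_crownforbiddenword}, which is impossible; so no such $\mathbf{s}$ exists, and since $n$ ranges over arbitrarily large even integers, $\mathbf{S}$ is non-finitely related by Theorem~\ref{thm_frconditions}.

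The step I expect to be the main obstacle is the consistency verification: checking that, after the identifications, the strongly primitive variables $x_j$ and $x_l$ really do occur with equal multiplicities on the two sides — including the boundary cases where $i,j,k,l$ are not all distinct — and that deleting them leaves exactly the pair of words appearing in~(\ref{eqn_crowncommute}). This is the one place where the bookkeeping could hide a miscount, and the prefix $x_j^{2p+2q}$ is included precisely to keep it symmetric. The remaining ingredients, namely that two-variable restrictions of crown words realise exactly the edge/non-edge labels of Figure~\ref{fig:maelstrom} and that no word can carry those labels (Lemma~\ref{lemma_crownforbiddenword}), are already in hand.
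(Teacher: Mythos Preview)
Your proposal is correct and follows essentially the same route as the paper's own proof: the same terms $\mathbf{t}_{ij}=x_j^{2p+2q}\cdot\mathcal{R}_{p,q}(j+1;n;i-1)\,\mathcal{R}_{p,q}(i+1,j-1)$, dependency from $c(\mathbf{t}_{ij})=X_n\setminus\{x_i\}$, consistency via Lemma~\ref{lemma_crownwordscommute} together with the strong primitivity of $x_j,x_l$ guaranteed by~(ii), and the appeal to Lemma~\ref{lemma_crownforbiddenword} through~(iii) and~(iv) to rule out a term. Your extra remarks (that $\mathbf{S}$ cannot be a group, and the explicit flag on the overlap cases in the consistency check) are not in the paper's write-up but are harmless elaborations of the same argument.
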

\begin{proof}
Let $n$ be even and $n > 4$. For any $1 \leq i < j \leq n$, let 
\[
\mathbf{t}_{ij} = x_j^{2p+2q}\cdot \mathcal{R}_{p,q}(j+1;n;i-1)\mathcal{R}_{p,q}(i+1,j-1). 
\]
Let $\mathscr{F} = \{\mathbf{t}_{ij} \mid 1 \leq i < j \leq n\}$. We will show that $\mathscr{F}$ is a scheme for $V(\mathbf{S})$. Dependency follows immediately from the fact that $x_i \not\in c(\mathbf{t}_{ij})$. To show consistency, take any $i,j,k,l \in \underline{n}$ with $i < j$ and $k < l$. 
Condition (ii) ensures that $x_j$ and $x_l$ are strongly primitive in $\mathbf{t}_{ij}^{(kl)}$ and $\mathbf{t}_{kl}^{(ij)}$ with $\text{occ}(x_j,\mathbf{t}_{ij}^{(kl)}) = 2p+2q = \text{occ}(x_j,\mathbf{t}_{kl}^{(ij)})$ and $\text{occ}(x_l,\mathbf{t}_{ij}^{(kl)}) = 2p+2q = \text{occ}(x_l,\mathbf{t}_{kl}^{(ij)})$. Combining this fact and (i) allows us to obtain $\mathbf{S} \models \mathbf{t}_{ij}^{(kl)}\approx \mathbf{t}_{kl}^{(ij)}$ from Lemma \ref{lemma_crownwordscommute}. Therefore $\mathscr{F}$ obeys the consistency condition hence $\mathscr{F}$ is a scheme for $V(\mathbf{S})$. 

Let $G$ denote the graph in Figure \ref{fig:maelstrom}. By the construction of the terms $\mathbf{t}_{ij}$ and ~(iii), if $\mathscr{F}$ did come from a term $\mathbf{t}$, then $\mathbf{t}[x,y] = x^py^{p+q}x^q$ whenever $(x,y) \in E(G)$. Furthermore, if ~$(x,y) \not\in E(G)$, then by (iv) the terms $\mathbf{t}_{ij}$ would also require that $\mathbf{t}[x,y] \in \{x^py^px^qy^q,y^px^py^qx^q,x^{p+q}y^{p+q},y^{p+q}x^{p+q}\}$. But by Lemma \ref{lemma_crownforbiddenword} no such term can exist. It follows that $\mathscr{F}$ does not come from a term and therefore $\mathbf{S}$ is non-finitely related. 
\end{proof}

\begin{eg}\label{eg_abba}
    $M(abba)$ is non-finitely related.
\end{eg}
\begin{proof}
    Conditions (iii), and (iv) of Theorem \ref{thm_crownwords} clearly hold for $M(abba)$ with $p = q = 1$. As $M(abba) \models \mathscr{A}_{3,1}$ and $3 < 2p+2q = 4$, the monoid $M(abba)$ satisfies condition (ii) of Theorem \ref{thm_chainwordsnfr}. Condition (i) can be obtained by noticing that any evaluation of the term $x^2y^2$ is 0 unless $x$ or $y$ is assigned 1. 
\end{proof}
We can now establish the existence of two non-finitely related semigroups whose direct product is finitely related. 

\begin{corollary}\label{cor_existencetwonfrisfr}
There exists two non-finitely related semigroups whose direct product is finitely related.
\end{corollary}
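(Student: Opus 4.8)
The plan is to take the two non-finitely related monoids already produced in this section, namely $\mathbf{S}_1 = M(abab, aabb)$ (Example \ref{eg_ababaabb}) and $\mathbf{S}_2 = M(abba)$ (Example \ref{eg_abba}), and show that their direct product is finitely related by computing the variety it generates. Since $\mathbf{S}_1 \times \mathbf{S}_2$ generates $V(\mathbf{S}_1) \vee V(\mathbf{S}_2)$, and by Theorem \ref{thm_frvarietalproperty} any two algebras generating the same variety are simultaneously finitely related or not, it is enough to show that $V(\mathbf{S}_1) \vee V(\mathbf{S}_2) = V(M(A_2))$ for $A = \{a, b\}$, since $M(A_2)$ is finitely related by Theorem \ref{thm_makfinitelyrelated}.

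To carry this out, I would first record the routine fact that for any set of words $W$ one has $V(M(W)) = \bigvee_{\mathbf{w} \in W} V(M(\mathbf{w}))$: each $M(\mathbf{w})$ is a Rees quotient of $M(W)$, and conversely the product of the quotient maps embeds $M(W)$ into $\prod_{\mathbf{w} \in W} M(\mathbf{w})$, since a nonzero element of $M(W)$ is a subword of some $\mathbf{w} \in W$ and is mapped faithfully (and distinctly) by that coordinate. Applying this to $W = \{abab, aabb\}$ gives $V(\mathbf{S}_1) \vee V(\mathbf{S}_2) = V(M(abab)) \vee V(M(aabb)) \vee V(M(abba))$. For the other side, the $2$-limited words over $\{a, b\}$ are precisely the subwords of the six words $aabb, abab, abba, baab, baba, bbaa$, so $M(A_2) = M(\{aabb, abab, abba, baab, baba, bbaa\})$; applying the displayed fact again, together with the alphabet-swap isomorphisms $M(baab) \cong M(abba)$, $M(baba) \cong M(abab)$ and $M(bbaa) \cong M(aabb)$, yields $V(M(A_2)) = V(M(abab)) \vee V(M(aabb)) \vee V(M(abba))$. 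Comparing the two expressions gives the desired variety identity, and the corollary follows from Theorems \ref{thm_makfinitelyrelated} and \ref{thm_frvarietalproperty}.

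I expect the main difficulty to lie not in the (short) verification above but in choosing the two factors correctly. The obvious candidate $M(abab) \times M(abba)$ does not work: for $p = q = 1$ the word $xyxy$ remains an isoterm in the join $V(M(abab)) \vee V(M(abba))$, chain words still commute there, and $\mathscr{A}_3$ still holds, so that variety again satisfies the hypotheses of Theorem \ref{thm_chainwordsnfr} and is non-finitely related. Enlarging the first factor to $M(abab, aabb)$ is exactly what is needed to push the join up to all of $V(M(A_2))$: once $aa$, $bb$ and $aabb$ all become nonzero, disjoint chain words no longer commute (for instance $x^2$ and a disjoint $y^2$ can be concatenated into $x^2y^2$), so the chain-word scheme of Theorem \ref{thm_chainwordsnfr} is not even a scheme for $V(M(A_2))$ and the obstruction vanishes. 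A minor point worth checking along the way is that two letters genuinely suffice, i.e.\ that passing to a larger alphabet contributes no new identities; this is immediate here since every word in play uses only $a$ and $b$.
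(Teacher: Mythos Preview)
Your proposal is correct and follows essentially the same approach as the paper: both choose $M(abab,aabb)$ and $M(abba)$ as the two non-finitely related factors and observe that their product generates $V(M(\{a,b\}_2))$, then invoke Theorems \ref{thm_makfinitelyrelated} and \ref{thm_frvarietalproperty}. You supply more detail on the variety identity (which the paper simply asserts) and add useful side commentary on why the simpler pair $M(abab)$, $M(abba)$ would not suffice.
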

\begin{proof}
Let $\mathbf{S} = M(abba)$ and $\mathbf{T} = M(abab,aabb)$. Then $\mathbf{S}$ and $\mathbf{T}$ are non-finitely related by Example \ref{eg_abba} and Example \ref{eg_ababaabb} respectively. But \[
V(\mathbf{S} \times \mathbf{T}) = V(M(\{a,b\}_2),\] which is finitely related by Theorem \ref{thm_makfinitelyrelated}. Therefore $\mathbf{S} \times \mathbf{T}$ is finitely related by Theorem \ref{thm_frvarietalproperty}.
\end{proof}

To the author's knowledge, Corollary \ref{cor_existencetwonfrisfr} appears to be the first example of two non-finitely related algebras whose direct product is finitely related.

Steindl has shown that finite relatedness is not preserved under homomorphic images, subsemigroups or Rees quotients \cite{steindl2022}. While we have given an example of two non-finitely related semigroups whose direct product is finitely related, it still remains unknown whether finite relatedness is preserved by direct products in the semigroup context. 

\begin{problem}
Do there exist finitely related semigroups $\mathbf{S}$ and $\mathbf{T}$ such that $\mathbf{S} \times \mathbf{T}$ is not finitely related?
\end{problem}

\section{A short note on the addition of an identity to non-finitely related semigroups}

The existence of a non-finitely related nilpotent monoid satisfies the question of whether finite relatedness is preserved by the addition of an identity element (see Steindl \cite{steindl2022}). We may also ask whether we can obtain a finitely related monoid from a non-finitely related semigroup. In the world of the finite basis problem, a non-finitely based semigroup $\mathbf{S}$ is \emph{conformable} if $\mathbf{S}^1$ is finitely based. Lee was the first to show that finite conformable semigroups exist \cite{lee2013finitely}. Lee did this by showing that, under certain circumstances, the direct product of a suitable monoid with a suitable nilpotent semigroup would suffice. Lee's paper on conformable semigroups translates, almost trivially, from the finite basis world to the finite relatedness world. 

The only if-part of the next lemma follows from \cite[Lemma 3.11]{Davey2011}.

\begin{lemma}\label{lemma_directproductnilpotent}
Let $\mathbf{S}$ and $\mathbf{N}$ be a semigroup and nilpotent semigroup respectively. Then $\mathbf{S}$ is finitely related if and only if $\mathbf{S} \times \mathbf{N}$ is finitely related.
\end{lemma}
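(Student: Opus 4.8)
The ``only if'' direction is the cited consequence of \cite[Lemma 3.11]{Davey2011}, so assume $\mathbf{S}\times\mathbf{N}$ is finitely related and let $\mathbf{N}$ be $d$-nilpotent; I will show $\mathbf{S}$ is finitely related using Theorem \ref{thm_frconditions}(1). Two elementary observations drive the argument. First, $V(\mathbf{S}\times\mathbf{N})=V(\mathbf{S})\vee V(\mathbf{N})$, so an identity holds in $V(\mathbf{S}\times\mathbf{N})$ if and only if it holds in both $V(\mathbf{S})$ and $V(\mathbf{N})$. Second, since $\mathbf{N}\models x_1\cdots x_d\approx y_1\cdots y_d$, any two words of length at least $d$ are equal in $V(\mathbf{N})$, and any such word evaluates to the zero of $\mathbf{N}$ under every assignment. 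We may also assume $\mathbf{S}$ is not itself nilpotent, since otherwise $\mathbf{S}$ and $\mathbf{S}\times\mathbf{N}$ are both nilpotent and hence finitely related by \cite[Theorem 3.4]{Davey2011}.

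The main case is where $V(\mathbf{S})$ contains the variety of semilattices. Fix $n$ larger than the term degree of $\mathbf{S}\times\mathbf{N}$, so in particular $n>\lvert S\rvert+1$ and $n-1\ge d$, and let $\mathscr{F}=\{\mathbf{t}_{ij}\mid 1\le i<j\le n\}$ be any $n$-ary scheme for $V(\mathbf{S})$ depending on all its variables. By Lemma \ref{lemma_containssemilattice}, $c(\mathbf{t}_{ij})=X_n\setminus\{x_i\}$, whence $x_i\notin c(\mathbf{t}_{ij})$ and $\lvert\mathbf{t}_{ij}\rvert\ge n-1\ge d$. I claim $\mathscr{F}$ is then also a scheme for $V(\mathbf{S}\times\mathbf{N})$: dependency is immediate since $x_i\notin c(\mathbf{t}_{ij})$, and for consistency, given $i<j$ and $k<l$, the identity $\mathbf{t}_{ij}^{(kl)}\approx\mathbf{t}_{kl}^{(ij)}$ holds in $V(\mathbf{S})$ because $\mathscr{F}$ is a scheme there and holds in $V(\mathbf{N})$ because identification preserves length, so both sides have length at least $d$; hence it holds in $V(\mathbf{S})\vee V(\mathbf{N})$. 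Since $n$ exceeds the term degree of $\mathbf{S}\times\mathbf{N}$, Theorem \ref{thm_frconditions}(1) yields a term $\mathbf{s}$ with $V(\mathbf{S}\times\mathbf{N})\models\mathbf{s}^{(ij)}\approx\mathbf{t}_{ij}$ for all $i<j$, and these identities hold a fortiori in the subvariety $V(\mathbf{S})$, so $\mathscr{F}$ comes from $\mathbf{s}$. Thus every sufficiently large scheme for $V(\mathbf{S})$ comes from a term, and $\mathbf{S}$ is finitely related.

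It remains to treat the case where $V(\mathbf{S})$ is semilattice-free (and, by the reduction above, $\mathbf{S}$ is not nilpotent); this is where the genuine difficulty lies. Here the terms of a scheme need no longer be long, so one first replaces $\mathscr{F}$ by $\mathscr{F}^{(d)}=\{\mathbf{t}_{ij}^{\,d}\}$, every term of which has length at least $d$; the identity $(\mathbf{t}_{ij}^{\,d})^{(kl)}=(\mathbf{t}_{ij}^{(kl)})^{d}$ together with the hypothesis $V(\mathbf{S})\models\mathbf{t}_{ij}^{(kl)}\approx\mathbf{t}_{kl}^{(ij)}$ and the length bound shows, exactly as before, that $\mathscr{F}^{(d)}$ is a scheme for $V(\mathbf{S}\times\mathbf{N})$, hence comes from a term $\mathbf{s}$, giving $V(\mathbf{S})\models\mathbf{s}^{(ij)}\approx\mathbf{t}_{ij}^{\,d}$. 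The obstacle is then to recover a term witnessing that $\mathscr{F}$ itself comes from a term: one needs to pass from the $d$-th powers back to the $\mathbf{t}_{ij}$. Since $\mathbf{N}$ is $d'$-nilpotent for every $d'\ge d$, one has freedom to choose the exponent, and when $\mathbf{S}$ is completely regular one picks $d'$ with $\mathbf{S}\models x^{d'}\approx x$, so that the operation induced by $\mathscr{F}$ on $\mathbf{S}$ coincides with that induced by $\mathscr{F}^{(d')}$ and the conclusion is immediate; the remaining semilattice-free, non-completely-regular semigroups (which, a structural analysis shows, have no zero and idempotents forming an antichain) are handled separately using the index and period of $\mathbf{S}$. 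This last extraction step is the one place requiring real care; the heart of the lemma is the observation in the semilattice case that the scheme needs no modification at all, because its terms are already long enough to be absorbed by the nilpotency of $\mathbf{N}$.
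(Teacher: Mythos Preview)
Your argument in the semilattice case is correct and is essentially the heart of the paper's proof as well: once the terms of the scheme are guaranteed to have length at least the nilpotency degree of $\mathbf{N}$, the scheme passes for free to $V(\mathbf{S}\times\mathbf{N})$ and the rest is immediate.

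The gap is precisely where you flag it: the semilattice-free case. Your $d$-th-power trick does produce a scheme $\mathscr{F}^{(d)}$ for $V(\mathbf{S}\times\mathbf{N})$ and hence a term $\mathbf{s}$ with $V(\mathbf{S})\models\mathbf{s}^{(ij)}\approx\mathbf{t}_{ij}^{\,d}$, and the completely regular subcase is salvaged by choosing $d'$ with $\mathbf{S}\models x^{d'}\approx x$. But the residual subcase---semilattice-free, non-nilpotent, non-completely-regular---is not actually dealt with; ``handled separately using the index and period of $\mathbf{S}$'' is a promissory note, not an argument, and it is not clear how one would extract from $\mathbf{s}^{(ij)}\approx\mathbf{t}_{ij}^{\,d}$ a term for $\mathscr{F}$ itself when no identity of the form $x^{d'}\approx x$ is available.

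The paper avoids this case distinction altogether with a single move: rather than work with $\mathbf{S}$ directly, adjoin a zero. By \cite[Theorem 4.1]{Mayr2013}, $\mathbf{S}$ is finitely related if and only if $\mathbf{S}^0$ is, and $(\mathbf{S}\times\mathbf{N})^0$ is finitely related; since $V((\mathbf{S}\times\mathbf{N})^0)=V(\mathbf{S}^0\times\mathbf{N})$, one is reduced to showing $\mathbf{S}^0$ finitely related given that $\mathbf{S}^0\times\mathbf{N}$ is. But $\mathbf{S}^0$ always contains a two-element semilattice (take $\{0,e\}$ for any idempotent $e$ of $\mathbf{S}$), so $V(\mathbf{S}^0)$ contains the variety of semilattices and your main-case argument applies verbatim with $\mathbf{S}^0$ in place of $\mathbf{S}$. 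This eliminates the problematic subcase entirely.
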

\begin{proof}
    The only if-part follows from \cite[Lemma 3.11]{Davey2011}. We prove the if-statement. 

    Suppose $\mathbf{S} \times \mathbf{N}$ is finitely related with term degree $k$. To show that $\mathbf{S}$ is finitely related, it suffices to show that $\mathbf{S}^0$ is finitely related by \cite[Theorem 4.1]{Mayr2013}. As $\mathbf{S} \times \mathbf{N}$ is finitely related, $(\mathbf{S} \times \mathbf{N})^0$ is finitely related by \cite[Theorem 4.1]{Mayr2013}. It follows from Theorem \ref{thm_frvarietalproperty} that $\mathbf{S}^0 \times \mathbf{N}$ is finitely related since $V((\mathbf{S} \times \mathbf{N})^0) = V(\mathbf{S}^0 \times \mathbf{N})$. 
    
    Let $d \in \mathbb{N}$ be the term degree of $\mathbf{S}^0 \times \mathbf{N}$ and $\mathscr{F} = \{\mathbf{t}_{ij}\mid 1 \leq i < j \leq n\}$ be an $n$-ary scheme for $\mathbf{S}^0$ that depends on all of its variables with $n > \text{max}(d,\lvert S^0 \rvert + 1)$. Then by Lemma \ref{lemma_containssemilattice} we have $\lvert c(\mathbf{t}_{ij}) \rvert = n - 1 \geq d \geq \lvert S^0 \times N \lvert > \lvert N \rvert$. But $\mathbf{N}$ is ~$\lvert N \rvert$-nilpotent by \cite[Lemma 3.3]{Davey2011}, hence $\mathscr{F}$ is a scheme for $\mathbf{N}$ and thus a scheme for $\mathbf{S}^0 \times \mathbf{N}$. It follows that $\mathscr{F}$ comes from a term ~$\mathbf{t}$ for $\mathbf{S}^0 \times \mathbf{N}$ since $\mathbf{S}^0 \times \mathbf{N}$ has term degree $d < n$. Therefore $\mathscr{F}$ comes from the term $\mathbf{t}$ for $\mathbf{S}^0$ and so $\mathbf{S}^0$ has term degree at most $n$. 
\end{proof}

We will now prove two theorems corresponding to Theorems 4 and 5 in Lee's paper on conformable semigroups \cite{lee2013finitely}. Following Lemma \ref{lemma_directproductnilpotent}, this essentially amounts to replacing the words ``finitely based" and ``non-finitely based" in Lee's paper with ``finitely related" and ``non-finitely related" respectively. 

\begin{theorem}\label{thm_conformablesemigroup1}
Suppose that $\mathbf{S}$ and $\mathbf{N}$ are semigroups such that 
\begin{enumerate}[label=(\roman*)]
    \item $\mathbf{S}^1$ is non-finitely related;
    \item $\mathbf{N}$ is nilpotent;
    \item $\mathbf{S}^1 \times \mathbf{N}^1$ is finitely related.
\end{enumerate}
Then $\mathbf{S}^1 \times \mathbf{N}$ is non-finitely related but $(\mathbf{S}^1 \times \mathbf{N})^1$ is finitely related.
\end{theorem}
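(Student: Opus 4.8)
The plan is to dispatch the two assertions separately, each by reduction to a result already in hand: Lemma \ref{lemma_directproductnilpotent} for the non-finite-relatedness of $\mathbf{S}^1 \times \mathbf{N}$, and Theorem \ref{thm_frvarietalproperty} for the finite-relatedness of $(\mathbf{S}^1 \times \mathbf{N})^1$. This mirrors Lee's treatment of Theorems 4 and 5 in \cite{lee2013finitely}, with ``finitely based'' replaced by ``finitely related'' throughout.

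For the first assertion, I would apply Lemma \ref{lemma_directproductnilpotent} with the ambient semigroup taken to be $\mathbf{S}^1$ and the nilpotent semigroup taken to be $\mathbf{N}$ (available from (ii)). The lemma gives that $\mathbf{S}^1$ is finitely related if and only if $\mathbf{S}^1 \times \mathbf{N}$ is; since $\mathbf{S}^1$ is non-finitely related by (i), so is $\mathbf{S}^1 \times \mathbf{N}$. This step is immediate.

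For the second assertion, the strategy is to prove $V\big((\mathbf{S}^1 \times \mathbf{N})^1\big) = V(\mathbf{S}^1 \times \mathbf{N}^1)$ and then invoke Theorem \ref{thm_frvarietalproperty} together with hypothesis (iii). One inclusion comes from realising $(\mathbf{S}^1 \times \mathbf{N})^1$ as a subsemigroup of $\mathbf{S}^1 \times \mathbf{N}^1$: the set $(S^1 \times N) \cup \{(1_S,1_N)\}$ is closed under multiplication (because $N$ is a subsemigroup of $N^1$ and $(1_S,1_N)$ is the identity of $\mathbf{S}^1 \times \mathbf{N}^1$), and, since $(1_S,1_N) \notin S^1 \times N$, it is isomorphic to $(\mathbf{S}^1 \times \mathbf{N})^1$. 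For the reverse inclusion, I would locate $\mathbf{N}^1$ inside $(\mathbf{S}^1 \times \mathbf{N})^1$ as the subsemigroup $\big(\{1_S\} \times N\big) \cup \{e\}$ (with $e$ the adjoined identity), and locate $\mathbf{S}^1$, up to the harmless adjunction of a second identity, as $\big(S^1 \times \{0_N\}\big) \cup \{e\}$, where $0_N$ is the zero of $\mathbf{N}$; hence $\mathbf{S}^1, \mathbf{N}^1 \in V\big((\mathbf{S}^1 \times \mathbf{N})^1\big)$, and therefore $\mathbf{S}^1 \times \mathbf{N}^1 \in V\big((\mathbf{S}^1 \times \mathbf{N})^1\big)$ using the standard identity $V(\mathbf{A} \times \mathbf{B}) = V(\mathbf{A}) \vee V(\mathbf{B})$. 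The two inclusions give $V\big((\mathbf{S}^1 \times \mathbf{N})^1\big) = V(\mathbf{S}^1 \times \mathbf{N}^1)$, which is finitely related by (iii), so $(\mathbf{S}^1 \times \mathbf{N})^1$ is finitely related by Theorem \ref{thm_frvarietalproperty}.

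The only delicate point — and the one deserving care rather than ingenuity — is the bookkeeping around adjoined identities, in particular the degenerate case where $\mathbf{N}$ is trivial: there $\mathbf{S}^1 \times \mathbf{N}$ already has an identity, the subsemigroup isomorphisms above collapse, and in fact hypotheses (i) and (iii) become jointly contradictory, so this case is vacuous. Assuming $\mathbf{N}$ is nontrivial (hence genuinely has no identity and a distinguished zero $0_N$), all the claimed subsemigroup identifications are straightforward verifications, and the proof is essentially a chain of appeals to Lemma \ref{lemma_directproductnilpotent}, Theorem \ref{thm_frvarietalproperty}, and closure of varieties under subalgebras, quotients and finite products.
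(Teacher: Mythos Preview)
Your proposal is correct and follows the same approach as the paper: Lemma~\ref{lemma_directproductnilpotent} for the first assertion, and the variety equality $V\big((\mathbf{S}^1 \times \mathbf{N})^1\big) = V(\mathbf{S}^1 \times \mathbf{N}^1)$ combined with Theorem~\ref{thm_frvarietalproperty} for the second. The paper simply cites Lee for the variety equality, whereas you spell out the subsemigroup identifications (and correctly flag the vacuous trivial-$\mathbf{N}$ case); a minor simplification is that $S^1 \times \{0_N\}$ is already a subsemigroup isomorphic to $\mathbf{S}^1$, so no extra adjoined identity is needed there.
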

\begin{proof}
The fact that $\mathbf{S}^1 \times \mathbf{N}$ is non-finitely related follows from (i), (ii) and Lemma \ref{lemma_directproductnilpotent}. Following Lee's argument, $V((\mathbf{S}^1 \times \mathbf{N})^1) = V(\mathbf{S}^1 \times \mathbf{N}^1)$ and hence $(\mathbf{S}^1 \times \mathbf{N})^1$ is finitely related by Theorem \ref{thm_frvarietalproperty}. 
\end{proof}

\begin{theorem}\label{thm_conformablesemigroup2}
Suppose that $\mathbf{S}$ and $\mathbf{N}$ are semigroups such that 
\begin{enumerate}[label=(\roman*)]
    \item $\mathbf{S}^1$ is non-finitely related;
    \item $\mathbf{N}$ is nilpotent;
    \item $\mathbf{N}^1$ is finitely related;
    \item $V(\mathbf{S}^1) \subseteq V(\mathbf{N}^1)$.
\end{enumerate}
Then $\mathbf{S}^1 \times \mathbf{N}$ is non-finitely related but $(\mathbf{S}^1 \times \mathbf{N})^1$ is finitely related.
\end{theorem}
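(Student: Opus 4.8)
The plan is to deduce this theorem from Theorem \ref{thm_conformablesemigroup1}, by showing that hypotheses (iii) and (iv) of the present statement together force $\mathbf{S}^1 \times \mathbf{N}^1$ to be finitely related. Once that is in hand, conditions (i) and (ii) here, combined with the finite relatedness of $\mathbf{S}^1 \times \mathbf{N}^1$, are exactly the hypotheses of Theorem \ref{thm_conformablesemigroup1}, whose conclusion is word-for-word what we are asked to prove.

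So the first step is to establish the varietal identity $V(\mathbf{S}^1 \times \mathbf{N}^1) = V(\mathbf{N}^1)$. Since $V(\mathbf{S}^1) \subseteq V(\mathbf{N}^1)$ by (iv), both direct factors lie in $V(\mathbf{N}^1)$, so $\mathbf{S}^1 \times \mathbf{N}^1 \in V(\mathbf{N}^1)$; conversely, $\mathbf{N}^1$ is a homomorphic image of $\mathbf{S}^1 \times \mathbf{N}^1$ via the projection onto the second coordinate, giving $V(\mathbf{N}^1) \subseteq V(\mathbf{S}^1 \times \mathbf{N}^1)$. Because $\mathbf{N}^1$ is finitely related by (iii), Theorem \ref{thm_frvarietalproperty} then yields that $\mathbf{S}^1 \times \mathbf{N}^1$ is finitely related.

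The second step is simply to invoke Theorem \ref{thm_conformablesemigroup1} with the same pair $\mathbf{S}, \mathbf{N}$: its hypothesis (i) is our (i), its (ii) is our (ii), and its (iii) is the finite relatedness of $\mathbf{S}^1 \times \mathbf{N}^1$ just obtained. Its conclusion is that $\mathbf{S}^1 \times \mathbf{N}$ is non-finitely related while $(\mathbf{S}^1 \times \mathbf{N})^1$ is finitely related, which finishes the argument. I do not anticipate a real obstacle here, as every step is a direct appeal to an earlier result; the only place needing a moment's care is the identity $V(\mathbf{S}^1 \times \mathbf{N}^1) = V(\mathbf{N}^1)$, and even that is routine once the containment in (iv) is used.
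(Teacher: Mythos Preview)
Your proposal is correct and follows essentially the same approach as the paper: both arguments reduce the finitely-related half to the varietal identity $V(\mathbf{S}^1 \times \mathbf{N}^1) = V(\mathbf{N}^1)$ obtained from (iv), and both rely (directly or via Theorem~\ref{thm_conformablesemigroup1}) on Lemma~\ref{lemma_directproductnilpotent} for the non-finitely-related half. The only cosmetic difference is that you verify hypothesis (iii) of Theorem~\ref{thm_conformablesemigroup1} and then invoke that theorem as a black box, whereas the paper re-runs its short argument inline.
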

\begin{proof}
From (i), (ii), and Lemma \ref{lemma_directproductnilpotent} we have $\mathbf{S}^1 \times \mathbf{N}$ is non-finitely related. From (iv) and Theorem \ref{thm_conformablesemigroup1} we get $V((\mathbf{S}^1 \times \mathbf{N})^1) = V(\mathbf{S}^1 \times \mathbf{N}^1) = V(\mathbf{N}^1)$ which is finitely related by (iii). Therefore $(\mathbf{S}^1 \times \mathbf{N})^1$ is finitely related by Theorem \ref{thm_frvarietalproperty}.
\end{proof}

\begin{corollary}\label{cor_conformal}
There exists a non-finitely related semigroup $\mathbf{T}$ such that $\mathbf{T}^1$ is finitely related.
\end{corollary}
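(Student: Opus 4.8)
The plan is to apply Theorem \ref{thm_conformablesemigroup2}, taking for $\mathbf{S}^1$ the smallest non-finitely related nilpotent monoid built from a word and for $\mathbf{N}^1$ a finitely related nilpotent monoid of the form $M(A_\kappa)$ sitting above it in the lattice of varieties. Concretely, I would set $A = \{a,b\}$, let $\mathbf{S} = S(abba)$ so that $\mathbf{S}^1 = M(abba)$, and let $\mathbf{N} = S(A_2)$ so that $\mathbf{N}^1 = M(A_2)$, and then verify the four hypotheses of Theorem \ref{thm_conformablesemigroup2}.

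Hypothesis (i), that $\mathbf{S}^1 = M(abba)$ is non-finitely related, is exactly Example \ref{eg_abba}; hypothesis (ii), that $\mathbf{N}$ is nilpotent, holds by construction; hypothesis (iii), that $\mathbf{N}^1 = M(A_2)$ is finitely related, is Theorem \ref{thm_makfinitelyrelated}. The only step requiring a short argument is hypothesis (iv), namely $V(\mathbf{S}^1) = V(M(abba)) \subseteq V(M(A_2)) = V(\mathbf{N}^1)$. For this I would observe that every factor of $abba$ is a $2$-limited word over $A$, so $abba \in A_2$ and the set $J$ of elements of $M(A_2)$ that are not factors of $abba$ is an ideal; the Rees quotient $M(A_2)/J$ is then isomorphic to $M(abba)$. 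Hence $M(abba)$ is a homomorphic image of $M(A_2)$ and the varietal inclusion is immediate. (This is the same kind of ``obvious'' inclusion invoked in the proof of Corollary \ref{cor_alternatingfrnfr}.)

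With (i)--(iv) in hand, Theorem \ref{thm_conformablesemigroup2} yields at once that $\mathbf{T} := \mathbf{S}^1 \times \mathbf{N} = M(abba) \times S(A_2)$ is non-finitely related while $\mathbf{T}^1$ is finitely related, which is the assertion of the corollary. (Alternatively one could invoke Theorem \ref{thm_conformablesemigroup1}, since $\mathbf{S}^1 \times \mathbf{N}^1 = M(abba) \times M(A_2)$ generates $V(M(A_2))$ and is therefore finitely related by Theorems \ref{thm_makfinitelyrelated} and \ref{thm_frvarietalproperty}.) I do not expect any real obstacle here: the corollary is a bookkeeping consequence of the two conformability theorems together with the finite relatedness of $M(A_\kappa)$, and the only mildly delicate point is the varietal inclusion (iv), which is dispatched by the Rees quotient description above.
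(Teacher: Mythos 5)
Your proposal is correct and follows essentially the same route as the paper: both verify the four hypotheses of Theorem \ref{thm_conformablesemigroup2} for a non-finitely related nilpotent monoid built from a $2$-limited word and a nilpotent semigroup whose adjoined-identity monoid generates $V(M(\{a,b\}_2))$ (the paper takes $\mathbf{S}=S(abab)$, $\mathbf{N}=S(abab,abba,aabb)$; you take $\mathbf{S}=S(abba)$, $\mathbf{N}=S(A_2)$, which works equally well). Your Rees-quotient justification of the inclusion $V(M(abba))\subseteq V(M(A_2))$ is sound and only makes explicit what the paper leaves implicit.
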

\begin{proof}
Let $\mathbf{S} = S(abab)$ and $\mathbf{N} = S(abab,abba,aabb)$. Then $\mathbf{S}$ and $\mathbf{N}$ satisfy the conditions of Theorem \ref{thm_conformablesemigroup2} by Corollary \ref{cor_corchain} and Theorem \ref{thm_makfinitelyrelated}. Therefore $\mathbf{T} = \mathbf{S}^1 \times \mathbf{N}$ is non-finitely related, but $\mathbf{T}^1$ is finitely related. 
\end{proof}

\section{A finitely related non-finitely based semigroup.}

This paper thus far has presented results that are suggestive of an unexpected transfer of a positive and negative properties from the finite basis world to the finite relatedness of nilpotent monoids built from words. Interestingly, this (perceived) connection extends beyond the specific semigroups we have studied in this paper:
\begin{itemize}
    \item All groups are finitely based and finitely related \cite{Aichinger2014}.
    \item Commutative semigroups, nilpotent semigroups and bands are both finitely based and finitely related \cite{Davey2011,Dolinka2018}.
    \item Clifford semigroups are finitely based and finitely related \cite{Mayr2013}.
    \item The $6$-element Brandt monoid $\mathbf{B}_2^1$ is non-finitely based \cite{Perkins1969} and non-finitely related \cite{Mayr2013}.
    \item The specific nilpotent monoid Steindl showed was non-finitely related is non-finitely based by applying a result of Jackson and Sapir \cite[Lemma 4.3]{jackson2001finite}.
\end{itemize}

To the author's knowledge, each semigroup that has been shown to be finitely related has also been finitely based. On the other hand the few examples of non-finitely related semigroups have been non-finitely based. There is no obvious link between the two concepts, and we do not mean to imply that such a link exists, but the coincidence up to now is interesting nonetheless. However, this coincidence does not extend to algebras beyond semigroups. For instance, one may consider a pointed group: an algebra $\langle G, \cdot, g \rangle$ where $\langle G, \cdot \rangle$ is a group and $g$ is a fixed element of $G$ considered as a nullary operation. Certainly every finite pointed group has few subpowers, so is finitely related \cite{Aichinger2014}, but there exists a finite non-finitely based pointed group \cite{bryant1982laws}.

We finish this paper by providing a semigroup example: the non-finitely based monoid $M(asabtb)$ \cite[Lemma 5.5]{jackson2005finiteness} is finitely related. 

Given a word $\mathbf{w}$, the linear variables of $\mathbf{w}$ are the variables $x \in c(\mathbf{w})$ such that $\text{occ}(x,\mathbf{w}) = 1$. The proof of the following lemma can be found in the proof of \cite[Theorem 2.3]{steindl2022}.
\begin{lemma}[{\cite{steindl2022}}]\label{lemma_linearvariables}
    Let $\mathbf{S}$ be a non-commutative nilpotent monoid. Let ~$\mathscr{F} = \{\mathbf{t}_{ij} \mid 1 \leq i < j \leq n\}$ be a scheme for $\mathbf{S}$ with ~$n >\lvert S \rvert + 1$ inducing an operation $f:S^n \to S$. If $Y \subset X_n$ are the linear variables of $\mathscr{F}$ then $f[Y]$ is induced by the isoterm
    \[
    x_{i_1}x_{i_2}\cdots x_{i_m},
    \]
    where $Y = \{x_{i_1},\dots,x_{i_m}\}$ and $x_{i_p} \neq x_{i_q}$ for any $p \neq q$.
\end{lemma}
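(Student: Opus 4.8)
The plan is to show that $f[Y]$ is induced by a \emph{multilinear} word over $Y$ (one in which every variable occurs exactly once) and that every such word is an isoterm for $\mathbf{S}$. Since $\mathbf{S}$ is a nilpotent monoid, Lemma \ref{lemma_occurences} applies with $q = 1$, and $p \geq 2$ as $\mathbf{S}$ is nontrivial. A variable $x_i$ lies in $Y$ precisely when its variable exponent is $e_i = 1$; then $e_i = 1 < p$, so Lemma \ref{lemma_occurences}(i) forces $\textup{occ}(x_i, \mathbf{t}_{jk}) = 1$ for all $j,k \in \underline{n}\setminus\{i\}$, i.e.\ each $x_i \in Y$ occurs exactly once in every term of $\mathscr{F}$ that depends on it. I would first record that a multilinear word $\mathbf{m}$ is an isoterm: if $\mathbf{S} \models \mathbf{m} \approx \mathbf{u}$ then $c(\mathbf{u}) = c(\mathbf{m})$ because $V(\mathbf{S})$ contains the semilattice variety; if some letter occurred twice in $\mathbf{u}$, assigning $1$ to the others would give $\mathbf{S} \models x \approx x^k$ with $k \geq 2$, hence $x = x^p = 0$ for all $x$, contradicting $1 \neq 0$; and if two letters occurred in opposite orders, assigning $1$ to the rest would give $\mathbf{S} \models xy \approx yx$, contradicting non-commutativity. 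Thus $\mathbf{u} = \mathbf{m}$.

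Next I would reconstruct the word from pairwise data. For $x_i, x_j \in Y$ pick $k < l$ in $\underline{n}\setminus\{i,j\}$ (possible since $n > \lvert S \rvert + 1$); identifying these coordinates shows $f[x_i,x_j] = (\mathbf{t}_{kl}[x_i,x_j])^{\mathbf{S}}$, and by linearity $\mathbf{t}_{kl}[x_i,x_j] \in \{x_ix_j, x_jx_i\}$, a word uniquely determined by the operation thanks to the isoterm observation. Define $x_i \prec x_j$ when this word is $x_ix_j$. Non-commutativity makes $\prec$ antisymmetric and total; transitivity follows by restricting to a triple $x_i,x_j,x_k \in Y$, which (as $n$ is large enough) is a term function induced by a single multilinear isoterm whose letter order witnesses all three pairwise comparisons at once. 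Hence $\prec$ is a strict linear order on $Y = \{x_{i_1},\dots,x_{i_m}\}$ with $x_{i_1} \prec \cdots \prec x_{i_m}$, and I claim $f[Y]$ is induced by $\mathbf{s} = x_{i_1}\cdots x_{i_m}$.

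To verify the claim I would evaluate at an arbitrary tuple $\bar a$ with $a_t = 1$ for $x_t \notin Y$ and locate, by the pigeonhole principle, indices $i<j$ with $a_i = a_j$, so that $f(\bar a) = \mathbf{t}_{ij}^{\mathbf{S}}(\bar a) = (\mathbf{t}_{ij}[Y])^{\mathbf{S}}(\bar a)$. If $\mathbf{S}$ has at least two non-linear variables then two non-$Y$ coordinates already equal $1$, and choosing $x_i,x_j \notin Y$ makes $\mathbf{t}_{ij}[Y]$ multilinear over $Y$ with every pair $\prec$-ordered (by consistency, $\mathbf{t}_{ij}[x_k,x_l] = f[x_k,x_l]$), whence $\mathbf{t}_{ij}[Y] = \mathbf{s}$ and $f(\bar a) = \mathbf{s}^{\mathbf{S}}(\bar a)$. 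Otherwise there is at most one non-linear variable, so $\lvert Y \rvert \geq n-1 > \lvert S \rvert$ forces two \emph{linear} coordinates to coincide; here it suffices to prove $\mathbf{S} \models \mathbf{t}_{ij}[Y] \approx \mathbf{s}^{(ij)}$ for $x_i,x_j \in Y$, which I would obtain by restricting both sides to each triple $\{x_i,x_j,x_k\}$: writing $\mathbf{w}$ for the isoterm inducing $f[x_i,x_j,x_k]$, consistency gives $\mathbf{t}_{ij}[x_j,x_k] \approx \mathbf{w}^{(ij)} \approx \mathbf{s}^{(ij)}[x_j,x_k]$, while Lemma \ref{lemma_occurences}(ii) pins the number of occurrences of the identified variable $x_j$. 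Once $f[Y]$ is shown to be induced by $\mathbf{s}$, the isoterm conclusion is immediate from the first paragraph.

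The main obstacle is this final case, where (almost) all variables are linear and the pigeonhole forces us to identify two variables of $Y$: here $\mathbf{s}^{(ij)}$ and $\mathbf{t}_{ij}[Y]$ each contain the repeated variable $x_j$, and one must argue that the surviving multilinear skeleton together with the placement of the two (or, when $p = 2$, possibly more) occurrences of $x_j$ is forced by the triple restrictions and the occurrence counts of Lemma \ref{lemma_occurences}. Everywhere else the argument is rigid for two structural reasons: transitivity of $\prec$ is exactly where the hypothesis $n > \lvert S \rvert + 1$ is spent (three-variable restrictions must be term functions), and the isoterm property of multilinear words is exactly where non-commutativity of $\mathbf{S}$ is spent, since in a commutative monoid $\prec$ would not even be well defined.
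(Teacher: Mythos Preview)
The paper does not give its own proof here but defers to Steindl; your strategy---defining $\prec$ on $Y$ from the two-variable restrictions $f[x_i,x_j]$, checking transitivity via three-variable restrictions, and assembling the multilinear isoterm $\mathbf{s}$---is the standard one and matches what the paper itself does in the analogous Theorem~\ref{thm_makfinitelyrelated}. Your isoterm verification for multilinear words and your Case~A (two or more non-linear variables, so two non-$Y$ coordinates are already~$1$) are correct.

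The genuine gap is exactly where you flag it. In Case~B you propose to prove $\mathbf{S}\models\mathbf{t}_{ij}[Y]\approx\mathbf{s}^{(ij)}$ by matching restrictions to triples $\{x_i,x_j,x_k\}$, but this does not go through for an arbitrary non-commutative nilpotent monoid: once $x_j$ is doubled, words such as $\mathbf{t}_{ij}[x_j,x_k]$ need not be isoterms (e.g.\ $M(ab)\models xyx\approx yxx\approx xxy$), so agreement up to $\approx$ on small restrictions does not force agreement on the whole word, and there is no general ``few-variable restrictions determine $\approx$'' principle for nilpotent monoids (Lemma~\ref{lemma_asabtb} needs five variables even for $M(asabtb)$). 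The repair is simpler than the route you attempt and avoids identities altogether: split on how many coordinates of $\bar a$ equal~$1$. If some $a_i=a_j=1$ (whether or not $x_i,x_j\in Y$), then $f(\bar a)=\mathbf{t}_{ij}[Y\setminus\{x_i,x_j\}](\bar a)$, and $\mathbf{t}_{ij}[Y\setminus\{x_i,x_j\}]$ is a multilinear word over $Y\setminus\{x_i,x_j\}$ respecting $\prec$, hence equal to $\mathbf{s}[Y\setminus\{x_i,x_j\}]$; since $a_i=a_j=1$ this gives $f(\bar a)=\mathbf{s}(\bar a)$. If at most one coordinate equals~$1$, then (as $\lvert Y\rvert\geq n-1$ in Case~B) both $\mathbf{s}(\bar a)$ and any $\mathbf{t}_{ij}(\bar a)$ are products of at least $n-2>\lvert S\rvert-1$ elements of the $(\lvert S\rvert-1)$-nilpotent semigroup $\mathbf{S}\setminus\{1\}$, so both equal~$0$.
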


Given a scheme $\mathscr{F}$, we will denote the set of linear letters of $\mathscr{F}$ by $\text{Lin}(\mathscr{F})$. 

Recall that given a word $\mathbf{u}$ and a letter $x \in c(\mathbf{u})$, the $p$-th occurrence of $x$ in $\mathbf{u}$ is denoted by ${}_px$. Let $\text{OccSet}(\mathbf{u}) = \{{}_px \mid x \in c(\mathbf{u}), 1 \leq p \leq \text{occ}(x,\mathbf{u})\}$. Given a semigroup $\mathbf{S}$ and an identity $\mathbf{u} \approx \mathbf{v}$, a set $Y \subset \text{OccSet}(\mathbf{u})$ is \emph{unstable} in $\mathbf{u}\approx \mathbf{v}$ if for any two elements ${}_px,{}_qy \in Y$, the order in which ${}_px$ and ${}_qy$ occur in $\mathbf{v}$ differs from the order they occur in $\mathbf{u}$. We will say $Y$ is unstable in $\mathbf{u}$ if there exists a word $\mathbf{v}$ such that $\mathbf{S} \models \mathbf{u} \approx \mathbf{v}$ and $Y$ is unstable in $\mathbf{u} \approx \mathbf{v}$. A set $Y \subset \text{OccSet}(\mathbf{u})$ is stable if it is not unstable. 

\begin{prop}\label{prop_asabtbstable}
Let $\mathbf{S} = M(asabtb)$ and $\mathbf{u}$ be a word over an alphabet $A$ with $\mathbf{u}[Lin(\mathbf{u})] = t_1t_2\cdots t_m$. Suppose that $\mathbf{u}[x,t_{i-1},t_{i}] = t_{i-1}xt_ix$ and $\mathbf{u}[y,t_{i-1},t_{i}] = t_{i-1}yt_iy$. Then $\{{}_1x,{}_1y\}$ is stable in $\mathbf{u}$ if and only if there exists a variable $z \in c(\mathbf{u})$ such that $\mathbf{u}[z,t_{i-1},t_{i}] = zt_{i-1}zt_i$ and $xzy$ or $yzx$ is a subword of $\mathbf{u}[x,y,z,t_i,t_{i-1}]$. 

Similarly, if $\mathbf{u}[x,t_{i-1},\mathbf{t}_{i}] = xt_{i-1}xt_i$ and $\mathbf{u}[y,t_{i-1},t_{i}] = yt_{i-1}yt_i$ then $\{{}_2x,{}_2y\}$ is stable in $\mathbf{u}$ if and only if there exists $z \in c(\mathbf{u})$ such that $\mathbf{u}[z,t_{i-1},t_{i}] = t_{i-1}zt_iz$ and $xzy$ or $yzx$ is a subword of $\mathbf{u}[x,y,z,t_i,t_{i-1}]$. 
\end{prop}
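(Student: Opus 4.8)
\textit{Proof proposal.}

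The plan is to read off both implications from the structure of the identities of $\mathbf{S}=M(asabtb)$, exploiting a single elementary non-identity. For a word $\mathbf{p}$ with a pair $s,t$ of consecutive linear letters, call a variable $\gamma$ of $\mathbf{p}$ \emph{$\Lambda$-linked} to $s,t$ if $\mathbf{p}[\gamma,s,t]=s\gamma t\gamma$ and \emph{$V$-linked} if $\mathbf{p}[\gamma,s,t]=\gamma s\gamma t$; thus the first assertion of the proposition concerns two $\Lambda$-linked variables $x,y$ and a $V$-linked variable $z$, and the second assertion is the left--right mirror. The one fact we need is that $\mathbf{S}\not\models asabtb\approx asbatb$: the identity substitution sends $asabtb$ to $asabtb$ itself, a nonzero element of $\mathbf{S}$, but sends $asbatb$ to $0$, since $asb$ is not a subword of $asabtb$. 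Applying the endomorphism of $A^*$ that fixes the four relevant letters and sends every other letter to the empty word (the identity of $\mathbf{S}$) upgrades this to the statement that \emph{if $\mathbf{S}\models\mathbf{p}\approx\mathbf{q}$, $\delta$ is $V$-linked and $\gamma$ is $\Lambda$-linked to a common pair $s,t$, then $\,{}_2\delta\,$ and $\,{}_1\gamma\,$ occur in the same order in $\mathbf{p}$ and in $\mathbf{q}$.} I will call such an order \emph{protected}.

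For $(\Leftarrow)$, assume $z$ is $V$-linked to $t_{i-1},t_i$ and, say, $xzy$ is a subword of $\mathbf{u}[x,y,z,t_i,t_{i-1}]$ (the case $yzx$ being symmetric, with the roles of $x$ and $y$ exchanged), so that $\,{}_1x,{}_2z,{}_1y\,$ occur in this order in $\mathbf{u}$. If $\mathbf{S}\models\mathbf{u}\approx\mathbf{v}$, then by the protectedness of the pairs $({}_1x,{}_2z)$ and $({}_2z,{}_1y)$, these pairs keep their order in $\mathbf{v}$, forcing $\,{}_1x\,$ before $\,{}_1y\,$ in $\mathbf{v}$; hence $\{{}_1x,{}_1y\}$ is stable. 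The second assertion of the proposition follows by the left--right dual, since reversal of words composed with the letter swap $a\leftrightarrow b$, $s\leftrightarrow t$ is an anti-automorphism of $\mathbf{S}$ that interchanges $\Lambda$-linked and $V$-linked variables and interchanges first with second occurrences.

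For $(\Rightarrow)$, suppose no $V$-linked variable has its second occurrence strictly between $\,{}_1x\,$ and $\,{}_1y\,$ in $\mathbf{u}$. Between these two occurrences lie only first occurrences of variables (no linear letter, as $t_{i-1}$ and $t_i$ flank them, and a variable occurring strictly between consecutive linear letters is not linear), and by assumption none of these is a second occurrence of a variable $V$-linked to $t_{i-1},t_i$. Define $\mathbf{v}$ by transposing $\,{}_1x\,$ past each intervening occurrence, one at a time, and finally past $\,{}_1y\,$; then $\{{}_1x,{}_1y\}$ is unstable provided $\mathbf{S}\models\mathbf{u}\approx\mathbf{v}$, which I would obtain by showing each individual transposition is an identity of $\mathbf{S}$ and using transitivity. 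A single transposition swaps two occurrences that are adjacent in the current word, the left of which is $\,{}_1x\,$; to verify it is an identity one runs over all substitutions $\theta\colon A\to\mathbf{S}$. If $\theta$ makes the value nonzero --- that is, a subword of $asabtb$ --- then, since every subword of $asabtb$ of length at least $2$ occurs there at most once while each of the two transposed variables occurs at least twice in $\mathbf{u}$, their images must be empty or equal, and then the transposition changes nothing; and whenever the value is $0$ on one side, restricting $\theta$ to the two variables in question and listing the scattered subwords of $asabtb$ shows it is $0$ on the other side too.

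The main obstacle is precisely this last verification --- equivalently, establishing the fragment of the equational theory of $M(asabtb)$ asserting that, for words of the shape arising here, $\mathbf{S}\models\mathbf{p}\approx\mathbf{q}$ is controlled by content, the linear skeleton, and the protected orders alone, so that there is no obstruction to reordering beyond the one exposed by $asabtb\not\approx asbatb$. Once that fragment is in hand, $(\Leftarrow)$ and $(\Rightarrow)$ are the two sides of the combinatorial observation that $\,{}_1x\,$ and $\,{}_1y\,$ are separated by a $V$-linked second occurrence exactly when sliding one past the other must flip a protected pair.
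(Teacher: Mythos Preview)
Your approach matches the paper's. For $(\Leftarrow)$ the paper says simply that $zt_{i-1}xzt_ix$ and $zt_{i-1}zxt_ix$ are isoterms for $\mathbf{S}$, which is exactly your ``protected order'' observation. For $(\Rightarrow)$ the paper also slides ${}_1x$ rightward one adjacent occurrence at a time, phrasing it as ``it is enough to show that if ${}_1z$ occurs immediately after ${}_1x$ then $\{{}_1x,{}_1z\}$ is unstable'', and then verifying one such swap.

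Two corrections. First, your assertion that between ${}_1x$ and ${}_1y$ ``lie only first occurrences of variables'' is false: a primitive letter (one occurring three or more times, or occurring twice with no linear letter between) can place any of its occurrences there. The paper explicitly splits into the primitive case---where the swap is trivially an identity, since any $\theta$ with $\theta(w)\neq 1$ already forces the value to $0$---and the non-primitive case, where the adjacent letter is $\Lambda$-linked like $x$. Your argument goes through once you make this split.

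Second, the step you call ``the main obstacle'' is not one, and it does not require any general fragment of the equational theory. From your stated reason (no factor of $asabtb$ of length $\geq 2$ repeats) you correctly get $\theta(x),\theta(w)\in\{1,a,b\}$, but that alone does not force ``empty or equal'': the case $\theta(x)=a$, $\theta(w)=b$ is not excluded by repetition counts. What excludes it is adjacency. If $\theta(x)=a$ and $\theta(w)=b$, the adjacent pair ${}_1x\,{}_1w$ contributes the factor $ab$, and in $asabtb$ the unique $ab$ sits at positions $3$--$4$; but then ${}_2x$, which comes later in the word, would have to supply the $a$ at position $1$, which is impossible. If $\theta(x)=b$ and $\theta(w)=a$, the factor $ba$ does not even occur in $asabtb$. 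Hence the value is $0$ whenever both images are non-identity, and the swap is an identity of $\mathbf{S}$. The paper compresses this into one line (``if neither $\Theta(x)=1$ nor $\Theta(z)=1$ then $\Theta(\mathbf{u})=0=\Theta(\mathbf{v})$'') without spelling out the case check either; but it is a two-line verification, not an obstacle.
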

\begin{proof}
    The backwards implication follows immediately from the fact that $zt_{i-1}xzt_ix$ and $zt_{i-1}zxt_ix$ are isoterms for $\mathbf{S}$. 

    To establish the forwards implication, we show the contrapositive is true. Without loss of generality, suppose ${}_1x$ occurs before ${}_1y$ in $\mathbf{u}$. By the assumption of the contrapositive, if an instance of $z \in C(\mathbf{u})$ occurs between ${}_1x$ and ${}_1y$ then $z$ is primitive in $\mathbf{u}$ or ${}_1z$ occurs between ${}_1x$ and ${}_1y$ and ${}_2z$ occurs after $t_i$. 
    
    It is enough to show that if ${}_1z$ occurs immediately after ${}_1x$ then $\{{}_1x,{}_1z\}$ is unstable in $\mathbf{u}$. Clearly $\{{}_1x,{}_1z\}$ is unstable if $z$ is primitive. Suppose $z$ is not primitive in $\mathbf{u}$. Let $\mathbf{v}$ be the word obtained from $\mathbf{u}$ by swapping ${}_1x{}_1z$ with ${}_1z{}_1x$. Consider any substitution $\Theta: A \to S$. Since neither $x$ nor $z$ are primitive or linear, $x$ and $z$ are 2-occurring. It follows that if neither $\Theta(x) = 1$ nor $\Theta(z) = 1$ then $\Theta(\mathbf{u}) = 0 = \Theta(\mathbf{v})$. Clearly if $\Theta(x) = 1$ or $\Theta(z) = 1$ then $\Theta(\mathbf{u}) = \Theta(\mathbf{v})$. Therefore $\mathbf{S} \models \mathbf{u} \approx \mathbf{v}$ and hence $\{{}_1x,{}_1z\}$ is unstable in $\mathbf{u}$. 

    The case for the stability of $\{{}_2x,{}_2y\}$ holds by a symmetric argument.
\end{proof}

Following Proposition \ref{prop_asabtbstable}, we will construct a normal form for any word $\mathbf{v}$ such that $M(asabtb) \models \mathbf{u} \approx \mathbf{v}$. To this end, assume $\mathbf{u}[\text{Lin}(\mathbf{u})] = t_1\cdots t_m$ and fix $i$ with $2 \leq i \leq m$. Let 
\[
{}_1X = \{x \in A \mid \mathbf{u}[t_{i-1},t_i,x] = t_{i-1}xt_ix\},
\]
and 
\[
{}_2X = \{x \in A \mid \mathbf{u}[t_{i-1},t_i,x] = xt_{i-1}xt_i\}.
\]
For each $x \in {}_1X$ let
\[
\mathcal{A}_x = \{y \in {}_1X \mid \{{}_1x,{}_1y\} \text{ is unstable in } \mathbf{u}\} \cup \{x\},
\]
and for each $x \in {}_2X$
\[
\mathcal{B}_x = \{y \in {}_2X \mid \{{}_2x,{}_2y\} \text{ is unstable in } \mathbf{u}\} \cup \{x\}.
\]
Using these sets, we obtain the following proposition.
\begin{prop}\label{prop_asabtbpartition}
    The sets $\{\mathcal{A}_x \mid x \in {}_1X\}$ and $\{\mathcal{B}_x \mid x \in {}_2X\}$ partition ${}_1X$ and ${}_2X$ respectively. 
\end{prop}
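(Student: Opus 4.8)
The plan is to exhibit each family as the set of blocks of an equivalence relation. On ${}_1X$, declare $x \equiv y$ to hold precisely when $x = y$ or $\{{}_1x,{}_1y\}$ is unstable in $\mathbf{u}$; then by definition $\mathcal{A}_x$ is exactly the $\equiv$-block of $x$, and $\mathcal{A}_x \subseteq {}_1X$. Reflexivity is built in, and symmetry is immediate because $\{{}_1x,{}_1y\}$ is an unordered pair, so the whole statement reduces to transitivity of $\equiv$. Spelled out and passed to the contrapositive, this says: if $x,y,w \in {}_1X$ are pairwise distinct and $\{{}_1x,{}_1w\}$ is stable in $\mathbf{u}$, then $\{{}_1x,{}_1y\}$ or $\{{}_1y,{}_1w\}$ is stable in $\mathbf{u}$. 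The claim for $\{\mathcal{B}_x \mid x \in {}_2X\}$ follows from the same argument applied to the second assertion of Proposition \ref{prop_asabtbstable}.

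Before the case analysis I would translate the stability criterion of Proposition \ref{prop_asabtbstable} into a statement about positions. Fix $x,y \in {}_1X$; then ${}_1x$ and ${}_1y$ both lie strictly between $t_{i-1}$ and $t_i$ in $\mathbf{u}$, while ${}_2x$ and ${}_2y$ lie after $t_i$, and a candidate witness $z$ (one with $\mathbf{u}[z,t_{i-1},t_i] = zt_{i-1}zt_i$) has ${}_1z$ before $t_{i-1}$ and ${}_2z$ strictly between $t_{i-1}$ and $t_i$. Since $z$ has a different occurrence pattern from $x$ and from $y$, the positions ${}_1x$, ${}_1y$, ${}_2z$ are pairwise distinct. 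A short inspection shows that, among the occurrences of the letters $x,y,z,t_{i-1},t_i$, the only one that can possibly sit strictly between ${}_1x$ and ${}_1y$ is ${}_2z$; hence $xzy$ (respectively $yzx$) is a subword of $\mathbf{u}[x,y,z,t_i,t_{i-1}]$ exactly when ${}_1x$ precedes ${}_2z$ precedes ${}_1y$ (respectively ${}_1y$ precedes ${}_2z$ precedes ${}_1x$) in $\mathbf{u}$. Consequently $\{{}_1x,{}_1y\}$ is stable if and only if some $z$ with $\mathbf{u}[z,t_{i-1},t_i] = zt_{i-1}zt_i$ has its second occurrence strictly between ${}_1x$ and ${}_1y$.

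Granting this reformulation, transitivity becomes a one-line case split. Suppose $\{{}_1x,{}_1w\}$ is stable, witnessed by $z$; interchanging $x$ and $w$ if necessary, assume ${}_1x$ precedes ${}_2z$ which precedes ${}_1w$. Because $y \in {}_1X$ has a different occurrence pattern from $z$, the position ${}_1y$ is distinct from ${}_2z$, so either ${}_1y$ precedes ${}_2z$ — giving ${}_1y, {}_2z, {}_1w$ in order, so that $z$ witnesses stability of $\{{}_1y,{}_1w\}$ — or ${}_2z$ precedes ${}_1y$ — giving ${}_1x, {}_2z, {}_1y$ in order, so that $z$ witnesses stability of $\{{}_1x,{}_1y\}$. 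This proves transitivity, so $\equiv$ is an equivalence relation; its classes are exactly the sets $\mathcal{A}_x$, they all lie in ${}_1X$, and every $x \in {}_1X$ lies in $\mathcal{A}_x$, whence $\{\mathcal{A}_x \mid x \in {}_1X\}$ partitions ${}_1X$. Running the identical argument with the pattern $zt_{i-1}zt_i$ replaced by $t_{i-1}zt_iz$ and ``second occurrence of $z$ between ${}_1x$ and ${}_1y$'' replaced by ``first occurrence of $z$ between ${}_2x$ and ${}_2y$'' — that is, using the second assertion of Proposition \ref{prop_asabtbstable} — shows that $\{\mathcal{B}_x \mid x \in {}_2X\}$ partitions ${}_2X$.

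I expect the only subtle point to be the positional reformulation: one must check that neither $t_{i-1}$ nor $t_i$, nor the second occurrences of $x$ and $y$, nor the first occurrence of the candidate $z$, can fall strictly between ${}_1x$ and ${}_1y$, so that ``$xzy$ or $yzx$ is a subword'' really is equivalent to ``${}_2z$ is sandwiched between ${}_1x$ and ${}_1y$''. Once that equivalence is in place, both transitivity and the extraction of a partition are routine.
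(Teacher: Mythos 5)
Your proposal is correct and follows essentially the same route as the paper: both reduce the statement to showing that ``equal or unstable'' is an equivalence relation on ${}_1X$ (resp.\ ${}_2X$), with the only real content being transitivity, which in both cases comes down to the observation that a witness $z$ for stability is characterised by whether ${}_2z$ lies strictly between the relevant first occurrences, followed by a betweenness case split. Your version is merely the contrapositive of the paper's Euclidean-property formulation, and your explicit positional reformulation of Proposition \ref{prop_asabtbstable} is a correct (and slightly more detailed) rendering of the step the paper passes over with ``It follows that\dots''.
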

\begin{proof}
    We prove the case for ${}_1X$ only as the case for ${}_2X$ follows from a symmetric argument.

    Note, it suffices to show that for all $y \in \mathcal{A}_x$ we have $\mathcal{A}_y = \mathcal{A}_x$. Suppose $y,z \in \mathcal{A}_x$. By Proposition \ref{prop_asabtbstable}, for all $x' \in {}_2X$ neither $xx'y$ nor $yx'x$ are subwords of $\mathbf{u}[t_{i-1},t_i,x',x,y]$. Similarly, for all $x' \in {}_2X$ neither $xx'z$ nor $zx'x$ are subwords of $\mathbf{u}[t_{i-1},t_i,x',x,z]$. It follows that for all $x' \in {}_2X$ neither $yx'z$ nor $zx'y$ are subwords of $\mathbf{u}[t_{i-1},t_i,x',y,z]$ and hence $\{{}_1y,{}_1z\}$ is unstable in $\mathbf{u}$. Therefore $\mathcal{A}_x \subseteq \mathcal{A}_y$. The reverse inclusion is similar. The proposition is proved. 
\end{proof}
\begin{construction}\label{construction_asabtb}
    \normalfont By Proposition \ref{prop_asabtbstable} either the first occurrence of every variable in $\mathcal{A}_x$ occurs before the second occurrence of every variable in $\mathcal{B}_x$ or vice-versa. This, with Proposition \ref{prop_asabtbpartition}, gives a natural linear order on the blocks $\{\mathcal{A}_x \mid x \in {}_1X\}$ and $\{\mathcal{B}_x \mid x \in {}_2X\}$ of ${}_1X \cup {}_2X$. Without loss of generality, suppose this linear order is given by
\[
    \mathcal{A}_{x_1} < \mathcal{B}_{x_2} < \mathcal{A}_{x_3} < \mathcal{B}_{x_4} < \cdots < \mathcal{A}_{x_{n-1}} < \mathcal{B}_{x_n}.
\]

For each $\mathcal{A}_x$ (resp.\ $\mathcal{B}_x$), choose any word $\mathbf{v}_{x}$ such that $c(\mathbf{v}_{x}) = \mathcal{A}_x$ (resp.\  $c(\mathbf{v}_{x}) = {B}_x$) and $\text{occ}(y,\mathbf{v}_{x}) = 1$ for all $y \in \mathcal{A}_x$ (resp.\ $y \in \mathcal{B}_x$). Set $\mathbf{w}_{i} = \mathbf{v}_{x_1}\mathbf{v}_{x_2}\cdots \mathbf{v}_{x_n}$. 

Finally, let ${}_1X_0 = \{x \in A \mid \mathbf{u}[t_1,x] = xt_1x\}$ and ${}_2X_m = \{x \in A \mid \mathbf{u}[t_m,x] = xt_mx\}$. Produce the word $\mathbf{w}_0$ by choosing any word with $c(\mathbf{w}_0) = {}_1X_0$ and $\text{occ}(x,\mathbf{w}_{0}) = 1$. Produce $\mathbf{w}_{m+1}$ analogously. 
\end{construction}

From Proposition \ref{prop_asabtbstable} and Construction \ref{construction_asabtb} we gain the following corollary.

\begin{corollary}
    \label{cor_asabtb}
    Let $\mathbf{S} = M(asabtb)$ and $\mathbf{u}$ be a word with $\mathbf{u}[Lin(\mathbf{u})] = t_1t_2\cdots t_m$. Produce the words $\mathbf{w}_0,\mathbf{w}_1,\cdots,\mathbf{w}_{m+1}$ as in Construction \ref{construction_asabtb}. Let 
    \[
    \mathbf{v} = \mathbf{w}_0t_1\mathbf{w}_1t_2\cdots \mathbf{w}_m t_m \mathbf{w}_{m+1} \cdot \prod_{x \in \text{Prim}(\mathbf{u})} x^2.
    \]
    Then $\mathbf{S} \models \mathbf{u} \approx \mathbf{v}$. 
\end{corollary}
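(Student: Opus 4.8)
The plan is to obtain $\mathbf{u}\approx\mathbf{v}$ in $\mathbf{S}=M(asabtb)$ as a composite of elementary identities of $\mathbf{S}$, in two stages. \emph{Stage 1 (remove the primitive variables).} Write $\mathbf{u}^{\circ}:=\mathbf{u}[c(\mathbf{u})\setminus\text{Prim}(\mathbf{u})]$. I would first show $\mathbf{S}\models\mathbf{u}\approx\mathbf{u}^{\circ}\cdot\prod_{x\in\text{Prim}(\mathbf{u})}x^{2}$. If $x$ is primitive then any substitution $\theta$ with $\theta(x)\neq 1$ sends every word having $x$ in its content to $0$, while $\theta(x)=1$ simply erases $x$; since $asabtb$ is square-free, $\mathbf{S}\setminus\{1\}\models x^{2}\approx 0$, so collapsing all occurrences of each primitive $x$ into a single factor $x^{2}$ appended at the end of the word preserves every evaluation. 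Two auxiliary facts should be recorded here: no linear variable of $\mathbf{u}$ is primitive (kill everything but one linear variable), so $\mathbf{u}^{\circ}$ has the same skeleton $t_1\cdots t_m$ of linear variables as $\mathbf{u}$; and $\mathbf{u}^{\circ}$ has no primitive variables of its own, so each of its non-linear variables occurs exactly twice and, by square-freeness again, the two occurrences can only sit relative to the skeleton in one of the patterns recorded by the sets ${}_1X_0$, ${}_1X$, ${}_2X$, ${}_2X_m$ of Construction \ref{construction_asabtb}.

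\emph{Stage 2 (normalise the core).} Here I would prove $\mathbf{S}\models\mathbf{u}^{\circ}\approx\mathbf{w}_0t_1\mathbf{w}_1t_2\cdots\mathbf{w}_mt_m\mathbf{w}_{m+1}$. The elementary moves used never touch a linear variable, and the $\mathbf{w}_i$ are built in Construction \ref{construction_asabtb} to respect the skeleton of $\mathbf{u}^{\circ}$, so it suffices to shuffle the two-occurring variables into their prescribed slots. If ${}_1x,{}_1y$ are first occurrences lying in a common block $\mathcal{A}_x$ — an unstable pair by the definition of $\mathcal{A}_x$ — then the argument in the proof of Proposition \ref{prop_asabtbstable} exhibits an $\mathbf{S}$-identity transposing ${}_1x$ and ${}_1y$ when they are adjacent, fixing everything else; the same applies to second occurrences and to the blocks $\mathcal{B}_x$. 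Composing such adjacent transpositions realises any ordering within each block. By Proposition \ref{prop_asabtbpartition} the $\mathcal{A}_x$'s and $\mathcal{B}_x$'s are genuine blocks of ${}_1X\cup{}_2X$, and stability of the cross-block pairs forces the block order $\mathcal{A}_{x_1}<\mathcal{B}_{x_2}<\cdots$ already present in $\mathbf{u}^{\circ}$; hence a finite chain of these identities carries $\mathbf{u}^{\circ}$ onto the word produced by Construction \ref{construction_asabtb}. Concatenating Stages 1 and 2 gives $\mathbf{S}\models\mathbf{u}\approx\mathbf{w}_0t_1\mathbf{w}_1t_2\cdots\mathbf{w}_mt_m\mathbf{w}_{m+1}\cdot\prod_{x\in\text{Prim}(\mathbf{u})}x^{2}=\mathbf{v}$, which is the claim.

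The part I expect to be the main obstacle is Stage 2: upgrading the \emph{pairwise} content of Proposition \ref{prop_asabtbstable} to a \emph{simultaneous} rearrangement — confirming that transpositions of adjacent unstable occurrences genuinely generate, within each block, every ordering Construction \ref{construction_asabtb} may select, and that performing them for one block never disturbs variables already placed — together with the Stage 1 bookkeeping that uses square-freeness of $asabtb$ to pin down exactly which window of the $t_i$-skeleton the two occurrences of a core variable may occupy. The boundary blocks $\mathbf{w}_0,\mathbf{w}_{m+1}$ and variables whose two occurrences straddle several linear variables are the fiddly cases.
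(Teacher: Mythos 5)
Your two-stage argument is correct and is essentially the argument the paper intends: the corollary is stated without an explicit proof, as an immediate consequence of Proposition \ref{prop_asabtbstable} and Construction \ref{construction_asabtb}, and your elaboration (square-freeness of $asabtb$ to pull the primitive variables out as trailing squares, then adjacent transpositions of unstable occurrences exactly as in the proof of Proposition \ref{prop_asabtbstable}) fills in precisely that reasoning. The ``simultaneous rearrangement'' worry you flag resolves cleanly: by Propositions \ref{prop_asabtbstable} and \ref{prop_asabtbpartition}, two same-block occurrences are never separated by an opposite-type occurrence while different blocks always are, so in $\mathbf{u}^{\circ}$ each block is already a contiguous run appearing in the forced order, and only within-block permutations --- generated by the adjacent swaps, which never create the dangerous $xxzz$ pattern --- are required.
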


\begin{lemma}\label{lemma_asabtb}
    Let $\mathbf{S} = M(asabtb)$ and $\mathbf{u}, \mathbf{v}$ be words over $A$ with $c(\mathbf{u}) = c(\mathbf{v})$. Then $\mathbf{S} \models \mathbf{u} \approx \mathbf{v}$ if and only if $\mathbf{S} \models \mathbf{u}[X] \approx \mathbf{v}[X]$ for every $X \subseteq A$ with $\lvert X \rvert \leq 5$.
\end{lemma}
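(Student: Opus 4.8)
The forward implication is immediate: as $\mathbf{S}$ is a monoid, substituting the identity for each variable in $A\setminus X$ turns any identity $\mathbf{u}\approx\mathbf{v}$ into $\mathbf{u}[X]\approx\mathbf{v}[X]$. For the converse the plan is to prove the contrapositive, the tool being Corollary \ref{cor_asabtb}: assuming $c(\mathbf{u})=c(\mathbf{v})$ but $\mathbf{S}\not\models\mathbf{u}\approx\mathbf{v}$, I would produce an $X$ with $\lvert X\rvert\le 5$ and $\mathbf{S}\not\models\mathbf{u}[X]\approx\mathbf{v}[X]$. By Corollary \ref{cor_asabtb} and Construction \ref{construction_asabtb}, two words of equal content are $\mathbf{S}$-equivalent exactly when they carry the same \emph{normal-form data}: the same linear skeleton $\mathbf{u}[\mathrm{Lin}(\mathbf{u})]=t_1\cdots t_m$; the same set $\mathrm{Prim}(\mathbf{u})$; for each consecutive pair $t_{i-1},t_i$ the same sets ${}_1X^{(i)},{}_2X^{(i)}$ and the same boundary sets ${}_1X_0,{}_2X_m$; and the same partition of each ${}_1X^{(i)}\cup{}_2X^{(i)}$ into blocks $\{\mathcal{A}_x\},\{\mathcal{B}_x\}$ together with the linear order of those blocks supplied by Construction \ref{construction_asabtb}. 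Under the hypothesis, $\mathbf{u}$ and $\mathbf{v}$ must differ in at least one of these data.

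The heart of the matter is the observation that each item of normal-form data is visible in subwords on at most five variables, so that \emph{any} discrepancy is already present in such a subword. Concretely: linearity of a letter $x$ is the one-variable condition $\mathrm{occ}(x,\mathbf{u})=1$, detected by $\mathbf{S}\not\models x\approx x^2$ (the index of $\mathbf{S}$ is $2$); the relative order of two linear letters is the two-variable condition $\mathbf{S}\not\models xy\approx yx$; a letter occurring at least three times is automatically in $\mathrm{Prim}(\mathbf{u})$, while a twice-occurring letter is primitive exactly when it avoids every ${}_1X^{(i)}\cup{}_2X^{(i)}$ and ${}_1X_0\cup{}_2X_m$, which, like the choice between ${}_1X^{(i)}$ and ${}_2X^{(i)}$, is read off the three-variable subword $\mathbf{u}[t_{i-1},t_i,x]$; the order of two blocks is read off a subword on at most four letters; and, crucially, the block partition is controlled by the instability of the pairs $\{{}_1x,{}_1y\}$ and $\{{}_2x,{}_2y\}$, which by Proposition \ref{prop_asabtbstable} depends only on the five-variable subwords $\mathbf{u}[x,y,z,t_{i-1},t_i]$. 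In every case the short words that arise, such as $t_{i-1}xt_ix$, $xt_{i-1}xt_i$, $zt_{i-1}xzt_ix$, $zt_{i-1}zxt_ix$ and their images, are isoterms for $\mathbf{S}$, so if the restrictions of $\mathbf{u}$ and $\mathbf{v}$ to the corresponding $X$ were $\mathbf{S}$-equivalent they would be literally equal, contradicting the discrepancy.

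One discrepancy needs a different move, namely a mismatch of multiplicity rather than arrangement: if a letter $x$ occurs at least three times in $\mathbf{u}$ but exactly twice and non-primitively in $\mathbf{v}$ (so the one-variable test $x\approx x^2$ is useless, $\mathbf{S}$ satisfying $x^2\approx x^3$), I would pick an index $i$ with $x\in{}_1X^{(i)}$ for $\mathbf{v}$ (or the analogous boundary case) and take $X=\{x,t_{i-1},t_i\}$; then $\mathbf{v}[X]=t_{i-1}xt_ix$ while $\mathbf{S}\models\mathbf{u}[X]\approx t_{i-1}t_ix^2$ after pulling the strongly primitive $x$ to the front via $\mathscr{A}_3$, and the substitution $t_{i-1}\mapsto 1$, $t_i\mapsto s$, $x\mapsto a$ separates these (the first word becomes $asa$, the second becomes $saa=0$). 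The same idea, using that the normal form appends all primitive squares after every $t_j$, handles a twice-occurring letter of $\mathbf{u}$ that is trapped in $\mathbf{u}$ but non-primitive in $\mathbf{v}$.

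I expect the main obstacle to be the case analysis itself: verifying that the list of normal-form data is genuinely complete (which is exactly the content of Corollary \ref{cor_asabtb}) and then checking that \emph{every} way in which the data can differ localizes to a subword on at most five variables, including confirming that each short word produced really is an isoterm. The binding case is the block partition, and the appearance of the bound $5$ is no accident: it matches the five variables of the subword $\mathbf{u}[x,y,z,t_{i-1},t_i]$ governing stability in Proposition \ref{prop_asabtbstable}, so a coarser argument would yield only the bound $6$.
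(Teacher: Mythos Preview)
Your proposal is correct and follows essentially the same route as the paper: both argue that every ingredient of the normal form from Corollary~\ref{cor_asabtb} and Construction~\ref{construction_asabtb} (linear skeleton, primitive set, the ${}_1X/{}_2X$ membership, the block partition via Proposition~\ref{prop_asabtbstable}, and the induced order on blocks) is determined by restrictions to at most five variables, so that agreement on all such restrictions forces $\mathbf{u}$ and $\mathbf{v}$ to share a common normal form. The only cosmetic differences are that the paper runs the argument directly rather than contrapositively, and handles your ``multiplicity mismatch'' case more briefly via the two-variable restrictions $\mathbf{u}[t_i,x]$ rather than the three-variable set $\{x,t_{i-1},t_i\}$.
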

\begin{proof}
    The only if-direction is trivial. To show the if-statement suppose $\mathbf{S} \models \mathbf{u}[X] \approx \mathbf{v}[X]$ for every $X \subseteq A$ with $\lvert X \rvert \leq 5$. Then any letter $t_i$ that is linear in $\mathbf{u}$ must be linear in $\mathbf{v}$. Moreover, $\mathbf{u}[\text{Lin}(\mathbf{u})] = \mathbf{v}[\text{Lin}(\mathbf{v})]$ can be ascertained from the fact that $\mathbf{u}[t_i,t_j] \approx \mathbf{v}[t_i,t_j]$ for any linear letters $t_i$ and $t_j$. 
    
    Notice that a letter $x$ is primitive in a word for $\mathbf{S}$ if and only if it occurs more than twice or there are no linear letters between two occurrences of $x$. We can then establish $\text{Prim}(\mathbf{u}) = \text{Prim}(\mathbf{v})$ by considering $\mathbf{u}[t_i,x]$ and $\mathbf{v}[t_i,x]$ for every letter $x$ and every linear letter $t_i$. 
    
    Now, since $\mathbf{S} \models \mathbf{u}[X] \approx \mathbf{v}[X]$ for any set of letters $X$ with $\lvert X \rvert = 5$, $\{{}_px,{}_py\}$ is stable in $\mathbf{u}$ if and only if $\{{}_px,{}_py\}$ is stable in $\mathbf{v}$ by Proposition \ref{prop_asabtbstable} for $p \in \{1,2\}$.

    Consider Construction \ref{construction_asabtb} and the normal form in Corollary \ref{cor_asabtb}. Not only do the sets $_{1}X$ and $_{2}X$ have to coincide for $\mathbf{u}$ and $\mathbf{v}$, by the above facts, but since the stability of $\{{}_px,{}_py\}$ is consistent across $\mathbf{u}$ and $\mathbf{v}$, the sets $\mathcal{A}_x$ and $\mathcal{B}_x$ have to coincide for $\mathbf{u}$ and $\mathbf{v}$. Moreover, the natural order on ${}_1X \cup {}_2X$ in $\mathbf{u}$ must be the same order on ${}_1X \cup {}_2X$ in $\mathbf{v}$. Therefore we have some word $\mathbf{w}_0t_1\mathbf{w}_1t_2\cdots \mathbf{w}_m t_m \mathbf{w}_{m+1}$ such that
    \[
    \mathbf{u} \approx \mathbf{w}_0t_1\mathbf{w}_1t_2\cdots \mathbf{w}_m t_m \mathbf{w}_{m+1}\cdot\prod_{x \in \text{Prim}(\mathbf{u})} x^2 \approx \mathbf{v} 
    \]
    by Corollary \ref{cor_asabtb}. Thus the lemma is proved. 
\end{proof}
Given an $n$-ary scheme $\mathscr{F}$ for a semigroup $\mathbf{S}$, let $\text{Prim}(\mathscr{F}) = \{x_i \in X_n \mid \forall k,l \in \underline{n}\setminus\{i\} \  x_i \in \text{Prim}(\mathbf{t}_{kl})\}$.

\begin{theorem}\label{thm_asabtbfr}
    The nilpotent monoid $M(asabtb)$ is finitely related.
\end{theorem}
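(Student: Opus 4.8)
The plan is to verify condition~(1) of Theorem~\ref{thm_frconditions}: for every $n>\lvert M(asabtb)\rvert+1$ and every $n$-ary scheme $\mathscr{F}=\{\mathbf{t}_{ij}\mid 1\le i<j\le n\}$ for $V(M(asabtb))$, inducing the operation $f$, I would show that $\mathscr{F}$ comes from a term. First record the relevant invariants of $\mathbf{S}:=M(asabtb)$: it is non-commutative, $V(\mathbf{S})$ contains the semilattice variety, its index is $p=2$, each letter occurs at most twice in $asabtb$ so every non-identity element of $\mathbf{S}$ squares to $0$, and $\mathbf{S}\models\mathscr{A}_{3}$ with $\alpha=3,\beta=1$ minimal (the law $\mathscr{A}_2$ fails under $x\mapsto a$, $t_1\mapsto 1$, $t_2\mapsto s$, where the left side is the nonzero element $asa$ and the right side is $a^{2}s=0$; while in $\mathscr{A}_3$ a variable occurring three times sends both sides to $0$ unless it is sent to~$1$). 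Consequently Lemmas~\ref{lemma_containssemilattice}, \ref{lemma_occurences}, \ref{lemma_schemeprimitive} and \ref{lemma_linearvariables} all apply, and (for $n$ also large enough, which is automatic here) the restriction of any operation obtained from $\mathscr{F}$ to a bounded number of variables, by setting the others to the identity, is a term function, unique up to $\mathbf{S}$-equivalence by consistency of $\mathscr{F}$.

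Next I would reduce to the ``core'' of $\mathscr{F}$. Put $P=\text{Prim}(\mathscr{F})\cup\text{Prim}_{st}(\mathscr{F})$ and $Y=X_n\setminus P$. Lemma~\ref{lemma_schemeprimitive} peels the strongly primitive variables off $f$, and a short argument — if $a_k\neq 1$ then any $\mathbf{t}_{ij}$ in which $x_k$ is primitive returns $0$, whence $f$ returns $0$, while also $a_k^{2}=0$ — peels off the remaining variables of $P$; together these give $f=f[Y]\cdot\prod_{x_k\in P}x_k^{e_k}$, so it suffices to find a term inducing $f[Y]$. By Lemma~\ref{lemma_occurences} and the fact that variables outside $\text{Prim}_{st}(\mathscr{F})$ occur at most twice in every term, each variable of $Y$ occurs exactly once, or exactly twice, in every term of $\mathscr{F}$. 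Let $L$ be the once-occurring (linear) variables and $T=Y\setminus L$; by Lemma~\ref{lemma_linearvariables} the operation $f[L]$ is induced by an isoterm, which I relabel $t_1t_2\cdots t_m$, fixing the skeleton of the term to be built, while the variables of $T$ will play the roles of the twice-occurring letters $a,b$ of $asabtb$.

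I would then reconstruct the normal form of Corollary~\ref{cor_asabtb}. For any bounded set $V$ of variables, $f[V]$ equals $\mathbf{t}_{kl}[V]$ for every identified pair $k,l\notin V$, so it is induced by a word $\mathbf{r}_V$ unique up to $\mathbf{S}$-equivalence. From the words $\mathbf{r}_{\{x,t_i,t_{i+1}\}}$, together with $\mathbf{r}_{\{x,t_1\}}$ and $\mathbf{r}_{\{x,t_m\}}$, I read off for each $x\in T$ the gap (between consecutive skeleton letters) in which each of its two occurrences lies — equivalently the sets ${}_1X$, ${}_2X$ and the boundary sets ${}_1X_0$, ${}_2X_m$ of Construction~\ref{construction_asabtb} — and from the words $\mathbf{r}_{\{x,y,z,t_i,t_{i+1}\}}$ I read off, via Proposition~\ref{prop_asabtbstable}, which pairs of occurrences are stable. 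Consistency of $\mathscr{F}$ makes all of this local data mutually coherent, so Propositions~\ref{prop_asabtbstable}--\ref{prop_asabtbpartition} and Construction~\ref{construction_asabtb} organise it into the blocks $\mathcal{A}_x$, $\mathcal{B}_x$ with their natural linear order, and I set $\mathbf{w}=\mathbf{w}_0t_1\mathbf{w}_1t_2\cdots t_m\mathbf{w}_m\mathbf{w}_{m+1}\cdot\prod_{x_k\in P}x_k^{2}$, the resulting normal-form word.

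Finally I would verify that $\mathscr{F}$ comes from $\mathbf{w}$, i.e.\ $\mathbf{S}\models\mathbf{w}^{(ij)}\approx\mathbf{t}_{ij}$ for all $i<j$. By Lemma~\ref{lemma_asabtb} it suffices to check $\mathbf{S}\models\mathbf{w}^{(ij)}[X]\approx\mathbf{t}_{ij}[X]$ for every $X$ with $\lvert X\rvert\le 5$; but both sides depend only on the restriction of, respectively, $\mathbf{w}$ and $f_{ij}$ (which equals $\mathbf{t}_{ij}^{\mathbf{S}}$) to the bounded set $X\cup\{x_i,x_j\}$, and $\mathbf{w}$ restricted to such a set is $\mathbf{S}$-equivalent to $\mathbf{r}_{X\cup\{x_i,x_j\}}$ by construction, so the two sides agree. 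Hence $\mathbf{w}^{(ij)}\approx\mathbf{t}_{ij}$ for all $i<j$, $\mathscr{F}$ comes from $\mathbf{w}$, and $M(asabtb)$ is finitely related. The step I expect to be the main obstacle is the coherence claim in the third paragraph: showing that the three- and five-variable data read off from $f$ — the gap assignments and the stability relation — is genuinely realisable as the normal form of a single word, which amounts to pushing the consistency condition of $\mathscr{F}$ carefully through Proposition~\ref{prop_asabtbstable}, Proposition~\ref{prop_asabtbpartition} and Construction~\ref{construction_asabtb}, and checking that the $P$-factoring of the second paragraph is compatible with this construction.
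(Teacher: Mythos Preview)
Your proposal is correct and follows essentially the same route as the paper: build the candidate term $\mathbf{w}$ from the local data $f[Y]$ for small $Y$ via Construction~\ref{construction_asabtb} and Corollary~\ref{cor_asabtb}, then verify $\mathbf{S}\models\mathbf{w}^{(ij)}\approx\mathbf{t}_{ij}$ using Lemma~\ref{lemma_asabtb}. The paper does exactly this, writing $\mathbf{v}=\mathbf{w}_0x_1\mathbf{w}_1\cdots x_m\mathbf{w}_{m+1}\cdot\prod_{x\in\text{Prim}(\mathscr{F})}x^2$ and checking $\mathbf{v}[Y]\approx\mathbf{s}_Y$ for $\lvert Y\rvert\le 5$ directly.

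The only organisational difference is your preliminary two-stage peel-off of $P=\text{Prim}(\mathscr{F})\cup\text{Prim}_{st}(\mathscr{F})$ via Lemma~\ref{lemma_schemeprimitive} plus your ad~hoc argument, whereas the paper simply appends the suffix $\prod_{x\in\text{Prim}(\mathscr{F})}x^2$ and never invokes Lemma~\ref{lemma_schemeprimitive}. In fact your invocation of Lemma~\ref{lemma_schemeprimitive} is redundant here: for $M(asabtb)$ one has $\text{Prim}_{st}(\mathscr{F})\subseteq\text{Prim}(\mathscr{F})$, since if $x_k$ were $2$-occurring and non-primitive in some $\mathbf{t}_{ij}$ then there is a linear $t$ with $\mathbf{t}_{ij}[x_k,t]=x_ktx_k$, an isoterm, and consistency forces $\mathbf{t}_{lm}[x_k,t]=x_ktx_k$ for all $l,m\neq k$, so $x_k$ is $2$-occurring everywhere and cannot lie in $\text{Prim}_{st}(\mathscr{F})$. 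Thus your $P$ is just $\text{Prim}(\mathscr{F})$ and your ad~hoc argument alone suffices; this is effectively what the paper does. Your flagged ``main obstacle'' --- coherence of the five-variable stability data --- is indeed the point where the work lies, and the paper handles it with the same bootstrap via Lemma~\ref{lemma_asabtb} that your final paragraph sketches.
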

\begin{proof}
        Let $\mathbf{S} = M(asabtb)$ and $\mathscr{F} = \{\mathbf{t}_{ij} \mid 1 \leq i < j \leq n\}$ be a scheme for $\mathbf{S}$ with ~$n >\lvert S \rvert + 1$ inducing the operation $f:S^n \to S$. Without loss of generality, assume $\{1,\dots,m\}$ indexes $\text{Lin}(\mathscr{F})$ and $f[x_1,\dots,x_m]$ is induced by ~$x_1x_2\cdots x_m$ (see Lemma \ref{lemma_linearvariables}). 
        
        For any subset $Y \subset X_n$ with $\lvert Y \rvert \leq 21$ the operation $f[Y]$ is induced by a term since $n > \lvert S \rvert + 1 \geq 22$. 
        We may then construct $\mathbf{v}$ as in Corollary \ref{cor_asabtb} using the terms $f[Y]$ for $\lvert Y \rvert \leq 5$.
        Perform this construction and let 
        \[
        \mathbf{v} = \mathbf{w}_0x_1\mathbf{w}_1x_2\cdots \mathbf{w}_m\mathbf{x}_m\mathbf{w}_{m+1} \cdot \prod_{x \in \text{Prim}(\mathscr{F})}x^2.
        \]
        We claim that $\mathscr{F}$ comes from the term $\mathbf{v}$. Take any subset $Y \subset X_n$ with $\lvert Y \rvert = 5$ and let $\mathbf{s}$ be the term inducing $f[Y]$. It is easy to verify $\mathbf{S} \models \mathbf{s} \approx \mathbf{v}[Y]$ by considering the following facts:
        \begin{itemize}
            \item $\mathbf{s}[x_{m_1},x_{m_2}] = \mathbf{v}[x_{m_1},x_{m_2}]$ for all linear variables $x_{m_1},x_{m_2}$;
            \item by the construction of $\mathbf{v}$, $\mathbf{s}[x_{m_1},x_{m_2},x_i,x_j,x_k] \approx \mathbf{v}[x_{m_1},x_{m_2},x_i,x_j,x_k]$ where $x_{m_1},x_{m_2}$ are linear and $x_i,x_j,x_k$ are 2-occurring;
            \item $\mathbf{S} \models x^2y^2 \approx y^2x^2 \approx (xy)^2 \approx (yx)^2$.
        \end{itemize}
        By deletion, we must also have that if $Y \subset X_n$ with $\lvert Y \rvert < 5$, then $\mathbf{S} \models \mathbf{s} \approx \mathbf{v}[Y]$. It follows that for any $i,j \in \underline{n}$ and any subset $Y \subset X_n$ with $\lvert Y \rvert \leq 5$, we obtain $\mathbf{S} \models \mathbf{t}_{ij}[Y] \approx \mathbf{v}^{(ij)}[Y]$. Therefore $\mathbf{S} \models \mathbf{t}_{ij} \approx \mathbf{v}^{(ij)}$ by Lemma \ref{lemma_asabtb} and the claim is proved. 
\end{proof}
\section*{Acknowledgements}
The author would like to express his deepest gratitude to the referee. Their  comments and suggestions have significantly improved this paper. 
\bibliography{refs}       
\bibliographystyle{abbrv} 
\end{document}